\newtheorem{theorem}{Theorem}
\newtheorem{lemma}[theorem]{Lemma}
\newtheorem{proposition}[theorem]{Proposition}
\newtheorem{corollary}[theorem]{Corollary}
\newenvironment{remark}[1][Remark]{\begin{trivlist}
\item[\hskip \labelsep {\bfseries #1}]}{\end{trivlist}}
\newcommand{\bx}{\boldsymbol{x}}
\newcommand{\by}{\boldsymbol{y}}
\newcommand{\bv}{\boldsymbol{v}}
\newcommand{\bw}{\boldsymbol{w}}
\newcommand{\bz}{\boldsymbol{z}}
\newcommand{\bV}{\mathbf{V}}
\newcommand{\bW}{\mathbf{W}}
\newcommand{\f}{\boldsymbol{f}}
\newcommand{\bn}{\boldsymbol{n}}
\newcommand{\bm}{\boldsymbol{m}}
\newcommand{\bg}{\boldsymbol{g}}
\newcommand{\tr}{{\textrm{tr}}}
\newcommand{\bq}{\boldsymbol{q}}
\newcommand{\bl}{\boldsymbol{l}}
\newcommand{\bu}{\boldsymbol{u}}
\begin{document}
\author{Chong Luo and Maria-Carme Calderer \\
School of Mathematics\\ University of Minnesota\\
Minneapolis, MN 55455\\
% { }\\
% and\\
% { }\\
% Satish Kumar
% \\
% Department of Chemical Engineering and Materials Sciences\\ University of Minnesota\\
% Minneapolis, MN 55455
}

% \begin{verse}
\title{Numerical Study of Liquid Crystal Elastomer Using Mixed Finite Element Method}
\bibliographystyle{siam}
\maketitle

\abstract{We aimed to use finite element method to simulate the unique behaviors of liquid crystal elastomer, 
such as semi-soft elasticity, stripe domain instabilities etc. We started from an energy functional with the 2D Bladon-Warner-Terentjev stored
energy of elastomer, the Oseen-Frank energy of liquid crystals, plus the penalty terms for the incompressibility constraint on 
the displacement, and the unity constraint on the director. 
Then we applied variational principles to get the differential equations.
Next we used mixed finite element method to do the numerical simulation. The existence, uniqueness, well-posedness and convergence of the 
numerical methods were investigated. The semi-soft elasticity was observed, and can be related to the rotation of the directors.
The stripe domain phenomenon, however, wasn't observed. This might due to the relative coarse mesh we have used.
}

\section{Introduction}
Nematic liquid crystal elastomer is a relatively new kind of elastic material 
which combines properties from both incompressible elasticity 
and rod-like liquid crystals. It has some unique behaviors such as the stripe-domain 
phenomenon and the semi-soft elasticity. These phenomena are usually
observed in the ``clamped-pulling'' experiement, in which
a piece of rectangular shape liquid crystal elastomer
is clamped at two ends and pulled along the direction
perpendicular to the initial alignment of the directors. 

Mitchell et al. \cite{mitchell1993strain} did the ``clamped-pulling''
experiement for \textit{acrylate-based} monodomain networks.
They found that the directors rotate in a discontinuous way: 
after a critical strain, the directors switch to the direction 
perpendicular to the original alignment.
On the other hand, Kundler and Finkelmann \cite{kundler1995strain} did 
the same experiment for \textit{polysiloxane}
liquid crystal elastomer. They found that
the directors rotate in a continuous manner.
They also observed the interesting stripe domain phenomenon.
Later, Zubarev and Finkelmann \cite{zubarev1999monodomain} re-investigated the 
``clamped-pulling'' experiment for polysiloxane liquid crystal elastomers,
to investigate the role of aspect ratio of the rectangular shape
monodomain in the formation of stripe domains. 
Figure \ref{fig:stripe-domain} shows the process of the ``clamped-pulling'' experiement 
in the case that the aspect ratio is $\mathrm{AR}=1$. 
In this graph, the pulling direction is vertical, while the initial alignment of
the directors is horizontal. The size of the elastomer was $5\times 5 mm^2$, with thickness
$50-300\pm 5$ $\mu m$.
 \begin{figure}[htbp]
 \centering
 \includegraphics[width=12cm]{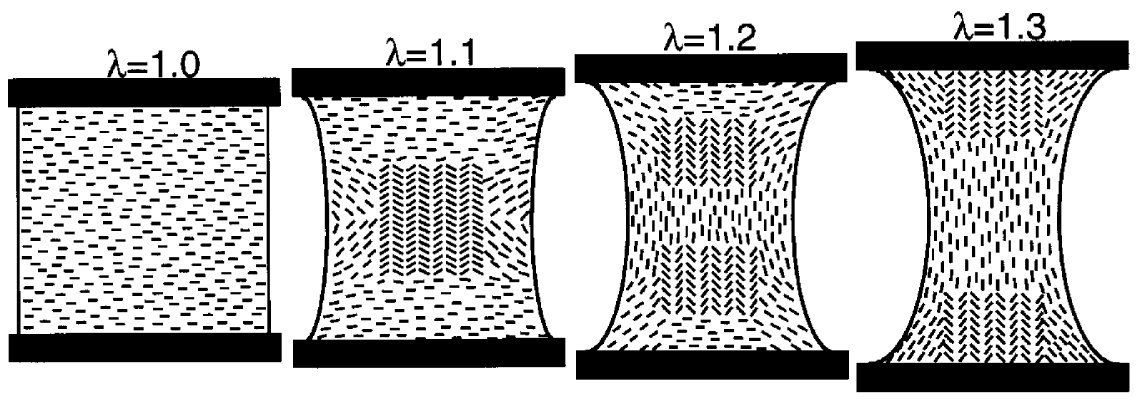}
 \caption{The stripe domain phenomenon. Reproduced from \cite{zubarev1999monodomain}}  \label{fig:stripe-domain}
 \end{figure}
They found that at some critical elongation factor $\lambda=1.1$, an opaque
region started to emerge in the center of the elastomer.
Using X-rays, they found that this region
was made of stripes of about $15 \mu m$ in width.
In each stripe, the directors were aligned uniformly,
while across the stripes, the directors were aligned
in a zig-zag way,
as shown in the second picture of Figure \ref{fig:stripe-domain}.
When the elastomer was pulled further, the opaque region
was broken into two regions symmetric about the center
(the third picture of Figure \ref{fig:stripe-domain}).
And these two regions moved closer to the two ends 
as the elastomer was pulled further. 
Eventually at $\lambda=1.3$ the two stripe domain regions reached the two ends 
(the last picture of Figure \ref{fig:stripe-domain})
and in this last stage, most of the directors at the center
had rotated 90 degrees, and were now aligned 
in the pulling direction (vertical direction).

Another interesting phenomenon of liquid crystal elastomers is
the so-called ``semi-soft elasticity''. 
 \begin{figure}[htbp]
 \centering
 \includegraphics[width=12cm]{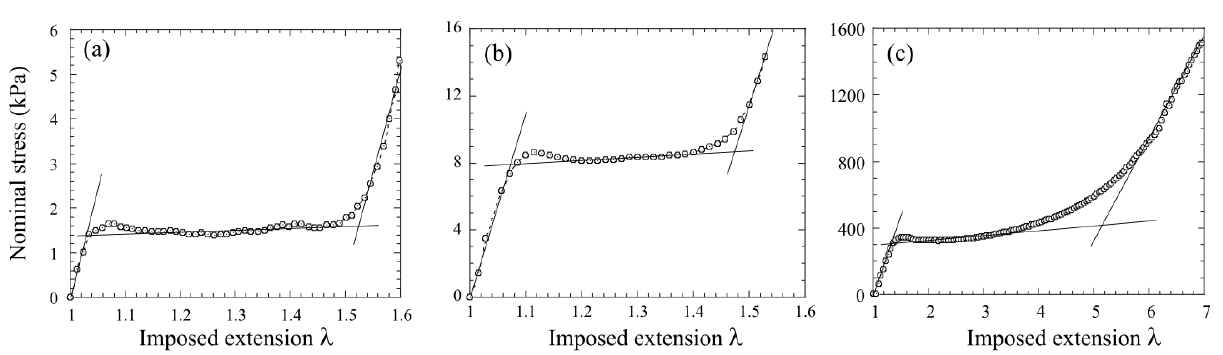}
 \caption{The semi-softness of liquid crystal elastomer. Reproduced from \cite{warner2007liquid}.}  \label{fig:semi-soft}
 \end{figure}
Figure \ref{fig:semi-soft} shows the stress-strain graph
of three different kinds of liquid crystal elastomers in
the ``clamped-pulling'' experiment (\cite{küpfer1994liquid, clarke2001effect}).
We can see that at the beginning, the nominal stress increases linearly
with the strain, just like ordinary elastic materials.
However, after the strain reaches a critical point,
the stress-strain curve becomes relatively ``flat'',
which we call the ``plateau region''. 
When the strain falls in this region, relatively small
stress can induce relatively large deformation, 
which means the elastomer is ``soft''. 
We can also see from Figure \ref{fig:semi-soft} that 
when the strain continues to increase and reaches another critical value,
the stress-strain curve becomes linear again.

The stored energy of liquid crystal elastomer proposed by Bladon, Warner and Terentjev (BTW) is \cite{bladon1993transitions}
\begin{equation} \label{eqn:BTW3D}
 W(F, \bn) = \mu \left(|F|^2-(1-a)|F^T\bn|^2-3a^{1/3}\right) 
\end{equation}
where $F=I+\nabla\bu(\boldsymbol{X})$ is the deformation gradient of the elastomer network, 
while $\bn=\bn(\boldsymbol{X})$ is a unit vector denoting the orientation
of the rod-like liquid crystal polymers, and $\boldsymbol{X}$ denotes an arbitrary point 
in the reference configuration. We assume that the elastomer is incompressible,
which is equivalent to $\mathrm{det}(F)=1$.
Also $\mu$ is the elasticity constant, and $a\in (0,1)$ is a constant measuring the
significance of the interaction between the displacement $\bu$ and
the orientation $\bn$. Notice that in the limit $a=1$, the BTW energy degenerates
to the stored energy of incompressible neo-Hookean materials. The closer 
the value $a$ is to $1$, the smaller the interaction between 
the displacement $\bu$ and the orientation $\bn$.
Similarly in 2D, the BTW energy can be defined as
\begin{equation} \label{eqn:BTW2D}
 W(F, \bn) = \mu \left(|F|^2-(1-a)|F^T\bn|^2-2a^{1/2}\right) 
\end{equation}
The only difference is that the constant term is changed from $3a^{1/3}$
to $2a^{1/2}$.
\begin{proposition} \label{prop:BTW2Dzero}
 The BTW energy $W(F, \bn)$ in equation (\ref{eqn:BTW3D}) and (\ref{eqn:BTW2D}) are always non-negative.
 \begin{itemize}
  \item In 3D, $W(F, \bn)=0$
 if and only if $\mathrm{eig}(F F^T)=\{a^{1/3},a^{1/3}, a^{-2/3}\}$ and $\bn$
 is an eigenvector of the eigenvalue $ a^{-2/3}$.
 \item In 2D, $W(F, \bn)=0$
 if and only if $\mathrm{eig}(F F^T)=\{a^{1/2}, a^{-1/2}\}$ and $\bn$
 is an eigenvector of the eigenvalue $ a^{-1/2}$.
 \end{itemize}
\end{proposition}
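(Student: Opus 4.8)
The plan is to reduce both statements to a single inequality about the spectrum of the left Cauchy--Green tensor $B = FF^T$, and then settle that inequality with two applications of AM--GM together with one convexity observation. Since $|F|^2 = \tr(FF^T) = \tr B$ and $|F^T\bn|^2 = \bn^T FF^T\bn = \bn^T B\bn$, while incompressibility $\det F = 1$ forces $\det B = 1$, the matrix $B$ is symmetric positive definite with unit determinant. Writing $B = \sum_i \lambda_i\,\e_i\otimes\e_i$ in an orthonormal eigenbasis $\{\e_i\}$, so that $\lambda_i > 0$ and $\prod_i\lambda_i = 1$, and setting $c_i := (\bn\cdot\e_i)^2 \ge 0$ (hence $\sum_i c_i = |\bn|^2 = 1$), one obtains
\begin{equation*}
\frac{1}{\mu}W(F,\bn) \;=\; \sum_i \lambda_i\bigl(1 - (1-a)c_i\bigr) \;-\; n\,a^{1/n},
\end{equation*}
with $n = 3$ in \eqref{eqn:BTW3D} and $n = 2$ in \eqref{eqn:BTW2D}. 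Writing $\alpha_i := 1 - (1-a)c_i \in (0,1]$, everything reduces to proving $\sum_i \alpha_i\lambda_i \ge n\,a^{1/n}$.

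First I would apply AM--GM to the positive numbers $\alpha_i\lambda_i$: since $\prod_i\lambda_i = 1$,
\begin{equation*}
\sum_i \alpha_i\lambda_i \;\ge\; n\Bigl(\prod_i \alpha_i\lambda_i\Bigr)^{1/n} \;=\; n\Bigl(\prod_i \alpha_i\Bigr)^{1/n},
\end{equation*}
so it suffices to show $\prod_i\alpha_i \ge a$ on the simplex $\bigl\{c_i \ge 0,\ \sum_i c_i = 1\bigr\}$. The key point is that $c \mapsto \log\prod_i\alpha_i = \sum_i \log\bigl(1 - (1-a)c_i\bigr)$ is a sum of concave functions of $c$, hence concave; a concave function on a simplex attains its minimum at a vertex, and at the vertex with $c_k = 1$ and $c_i = 0$ for $i \ne k$ the value of $\prod_i\alpha_i$ is $\bigl(1-(1-a)\bigr)\cdot 1 \cdots 1 = a$. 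Thus $\prod_i\alpha_i \ge a$, and the two displays combine to give $W \ge 0$.

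For the equality cases I would trace back when both estimates are tight. Equality in the product bound forces $c$ to be a vertex, i.e. $\bn = \pm\e_k$ for some $k$, whence $\alpha_k = a$ and $\alpha_i = 1$ for $i \ne k$; equality in AM--GM forces all $\alpha_i\lambda_i$ equal, i.e. $a\lambda_k = \lambda_i$ for $i \ne k$, and together with $\prod_i\lambda_i = 1$ this pins down $\lambda_k = a^{-(n-1)/n}$ and $\lambda_i = a^{1/n}$ for $i \ne k$. Since $a\in(0,1)$, $a^{-(n-1)/n}>1>a^{1/n}$, so this says exactly $\mathrm{eig}(FF^T)=\{a^{1/3},a^{1/3},a^{-2/3}\}$ in 3D and $\{a^{1/2},a^{-1/2}\}$ in 2D, with $\bn$ an eigenvector of the largest eigenvalue; conversely one checks directly that any such $(F,\bn)$ annihilates $W$. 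The main obstacle is precisely the sharp bound $\prod_i\alpha_i \ge a$: the naive termwise estimate $\alpha_i \ge a$ only yields the useless $W \ge \mu\,(n a - n a^{1/n}) < 0$, so one genuinely needs the concavity/vertex argument (equivalently a Lagrange-multiplier computation on the simplex) to exploit the fact that at most one $\alpha_i$ can be as small as $a$.
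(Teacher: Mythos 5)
The paper states Proposition~\ref{prop:BTW2Dzero} without giving a proof, so there is no ``paper's approach'' to compare against; I can only assess your argument on its own, and it is correct. Setting $\alpha_i=1-(1-a)c_i\in[a,1]$, the identity $W/\mu=\sum_i\alpha_i\lambda_i - n a^{1/n}$ together with $\prod_i\lambda_i=1$ is exact, AM--GM gives $\sum_i\alpha_i\lambda_i\ge n\bigl(\prod_i\alpha_i\bigr)^{1/n}$, and the sharp bound $\prod_i\alpha_i\ge a$ does follow from concavity of $c\mapsto\sum_i\log\bigl(1-(1-a)c_i\bigr)$ on the simplex: a concave function on a compact simplex attains its minimum at a vertex, and every vertex gives the value $a$. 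Moreover the Hessian of that function is negative definite on all of $\mathbb{R}^n$, so the concavity is strict and a minimizer must be a vertex; your equality analysis then correctly forces $\lambda_k=a^{-(n-1)/n}$ and $\lambda_i=a^{1/n}$ for $i\ne k$, i.e.\ $\{a^{1/3},a^{1/3},a^{-2/3}\}$ or $\{a^{1/2},a^{-1/2}\}$ with $\bn=\pm\e_k$ spanning the top eigenspace, and the converse is a one-line substitution. One point worth making explicit in a cleaned-up write-up: the inequality genuinely requires both $|\bn|=1$ and $\det F=1$ (hence $\det(FF^T)=1$); without incompressibility, $F=\varepsilon I$ for small $\varepsilon$ makes $W<0$, so the constraint is not cosmetic even though the proposition as printed does not restate it. A further advantage of your route is that it treats 2D and 3D uniformly, avoiding the dimension-specific trigonometric bookkeeping one would otherwise do in an eigenbasis of $FF^T$.
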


\begin{proposition} \label{prop:ShearEnergy2D}
 \begin{itemize}
  \item In 3D, assume $\lambda(t)=\lambda(t)=a^{1/6}(1-t)+a^{-1/3}t$ with $0\leq t \leq 1$. When there is no shearing and
\begin{equation} \label{eqn:uniformF}
 F = 
 \begin{pmatrix}
  a^{1/6}&    0&               0\\
  0&          \lambda(t)&      0\\
  0&          0&              a^{-1/6}/\lambda(t)
 \end{pmatrix}
\end{equation}
 the BTW energy (\ref{eqn:BTW3D}) is zero for $t=0,1$ and strictly positive for $0<t<1$. However, when there is shearing and
 \begin{equation} \label{eqn:shearingF}
 F = 
 \begin{pmatrix}
  a^{1/6}&    0&               0\\
  0&          \lambda(t)&      \pm\delta \\
  0&          0&              a^{-1/6}/\lambda(t)
 \end{pmatrix}
\end{equation} 
 with
 \begin{equation} \label{eqn:shearingdelta}
  \delta = \sqrt{a^{1/3}+a^{-2/3}-(\lambda^2+a^{-1/3}/\lambda^2)},
 \end{equation}
 the BTW energy (\ref{eqn:BTW3D}) can be zero for $0\leq t \leq 1$.
  \item In 2D, assume $\lambda(t)=a^{1/4}(1-t)+a^{-1/4}t$ with $0\leq t \leq 1$. When there is no shearing and
 \begin{equation}
  F = 
  \begin{pmatrix}
   \lambda(t)&   0 \\
   0&            1/\lambda(t)
  \end{pmatrix}
 \end{equation}
 the BTW energy (\ref{eqn:BTW2D})  is zero for $t=0,1$ and  is strictly positive for $0<t<1$. However, when there is shearing and
 \begin{equation}
  F = 
  \begin{pmatrix}
   \lambda(t)&   \pm \delta \\
   0&            1/\lambda(t)
  \end{pmatrix}
 \end{equation}
 with
 \begin{equation}
  \delta = \sqrt{a^{1/2}+a^{-1/2}-(\lambda^2+\lambda^{-2})},
 \end{equation}
 the BTW energy (\ref{eqn:BTW2D}) can be zero for $0\leq t \leq 1$.
 \end{itemize}
\end{proposition}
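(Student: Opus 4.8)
The plan is to reduce all four assertions to Proposition~\ref{prop:BTW2Dzero}, which already supplies $W(F,\bn)\ge 0$ and pins down the zero set of $W$ through the spectrum of $FF^T$. In each case one first checks that $\det F=1$ (so the configuration is admissible), then computes $FF^T$; the rest is elementary $2\times 2$ linear algebra.

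\emph{Unsheared configurations.} Here $FF^T$ is diagonal, so in 3D $\mathrm{eig}(FF^T)=\{a^{1/3},\lambda(t)^2,a^{-1/3}/\lambda(t)^2\}$ and in 2D $\mathrm{eig}(FF^T)=\{\lambda(t)^2,\lambda(t)^{-2}\}$; note $\det F=1$ forces the product of these to equal $1$ automatically. This multiset equals the zero set $\{a^{1/3},a^{1/3},a^{-2/3}\}$ (resp.\ $\{a^{1/2},a^{-1/2}\}$) of Proposition~\ref{prop:BTW2Dzero} exactly when $\lambda(t)^2\in\{a^{1/3},a^{-2/3}\}$ (resp.\ $\{a^{1/2},a^{-1/2}\}$); since $\lambda$ is strictly monotone on $[0,1]$ with $\lambda(0)^2$ and $\lambda(1)^2$ equal to the two required values, this occurs precisely at $t=0$ and $t=1$. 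At those endpoints one takes $\bn$ to be the unit eigenvector of the largest eigenvalue $a^{-2/3}$ (resp.\ $a^{-1/2}$) --- it is largest because $a<1$ --- and Proposition~\ref{prop:BTW2Dzero} yields $W=0$; for $0<t<1$ the spectrum is not the zero set, so $W(F,\bn)>0$ for \emph{every} unit $\bn$, again by Proposition~\ref{prop:BTW2Dzero}.

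\emph{Sheared configurations.} The shear parameter $\delta$ is chosen precisely so that $\mathrm{tr}(FF^T)$ hits the target value. In 3D the matrix $FF^T$ is block diagonal, splitting off the decoupled eigenvalue $a^{1/3}$ and leaving the symmetric block
\[
\begin{pmatrix} \lambda^2+\delta^2 & \pm\,\delta\,a^{-1/6}/\lambda \\ \pm\,\delta\,a^{-1/6}/\lambda & a^{-1/3}/\lambda^2\end{pmatrix},
\]
whose trace is $\lambda^2+\delta^2+a^{-1/3}/\lambda^2=a^{1/3}+a^{-2/3}$ by the definition of $\delta$, and whose determinant computes to $a^{-1/3}=a^{1/3}\cdot a^{-2/3}$. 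A symmetric block has real eigenvalues, and two reals with prescribed sum $a^{1/3}+a^{-2/3}$ and product $a^{1/3}\cdot a^{-2/3}$ must be $a^{1/3}$ and $a^{-2/3}$; hence $\mathrm{eig}(FF^T)=\{a^{1/3},a^{1/3},a^{-2/3}\}$ for all $t\in[0,1]$, and choosing $\bn$ to be the unit eigenvector of $a^{-2/3}$ and invoking Proposition~\ref{prop:BTW2Dzero} gives $W=0$ throughout. The 2D sheared case is identical, with block $\begin{pmatrix}\lambda^2+\delta^2 & \pm\delta/\lambda\\ \pm\delta/\lambda & \lambda^{-2}\end{pmatrix}$ of trace $a^{1/2}+a^{-1/2}$ and determinant $1=a^{1/2}\cdot a^{-1/2}$.

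\emph{The one point needing real care} is that $\delta$ is actually a real number, i.e.\ $\delta^2\ge 0$. For this one observes that $\varphi(\lambda):=\lambda^2+c\,\lambda^{-2}$, with $c=a^{-1/3}$ in 3D and $c=1$ in 2D, is convex on $(0,\infty)$ and satisfies $\varphi(\lambda(0))=\varphi(\lambda(1))=a^{1/3}+a^{-2/3}$ (resp.\ $a^{1/2}+a^{-1/2}$); since $\lambda(t)=(1-t)\lambda(0)+t\,\lambda(1)$ is a convex combination of the endpoint values, convexity gives $\varphi(\lambda(t))\le a^{1/3}+a^{-2/3}$ (resp.\ $a^{1/2}+a^{-1/2}$), so $\delta^2\ge 0$, with $\delta=0$ exactly at $t=0,1$, which consistently recovers the unsheared matrices there. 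This convexity estimate, together with keeping the ``there exists $\bn$'' statement (for $W=0$) separate from the ``for all $\bn$'' statement (for $W>0$), is the only delicate bookkeeping; everything else is direct computation.
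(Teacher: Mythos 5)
The paper itself states Proposition~\ref{prop:ShearEnergy2D} without proof, so there is no ``paper's own proof'' to compare against; your argument therefore has to stand on its own, and it does. The reduction to Proposition~\ref{prop:BTW2Dzero} is the right move, and your handling of each piece is correct: for the unsheared matrices $FF^T$ is diagonal and the eigenvalue multiset equals the zero set precisely at the endpoints $t=0,1$ because $\lambda$ is strictly monotone between $\lambda(0)^2$ and $\lambda(1)^2$; for the sheared matrices the $2\times 2$ symmetric block has trace $a^{1/3}+a^{-2/3}$ (resp.\ $a^{1/2}+a^{-1/2}$) by the choice of $\delta$ and determinant $a^{-1/3}$ (resp.\ $1$) because $\delta^2$ cancels, pinning the eigenvalues to the target pair; and your convexity argument cleanly shows $\delta^2\ge 0$ on $[0,1]$ with equality only at the endpoints. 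You also correctly verify $\det F=1$ in each case, which is the standing incompressibility hypothesis under which Proposition~\ref{prop:BTW2Dzero} is valid, and you correctly separate the existential claim (there exists $\bn$ giving $W=0$) from the universal one ($W>0$ for all unit $\bn$ when $0<t<1$); the latter is in fact slightly stronger than what Corollary~\ref{cor:nonconvexBTW} needs, which only uses the minimum over $\bn$. In short, this is a complete and correct proof supplying exactly the missing argument.
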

\begin{corollary} \label{cor:nonconvexBTW}
 If we take 
 \begin{align}
  W(F) ={}& \min_{|\bn|=1} W_{BTW}(n, F), \\
       ={}&
       \begin{cases}
          \mu \left(\lambda_1^2+\lambda_2^2+a\lambda_3^2-3a^{1/3}\right)& \text{ in 3D} \\
          \mu \left(\lambda_1^2+a\lambda_2^2-2a^{1/2}\right)& \text{ in 2D} \\
       \end{cases}
 \end{align}
 then $W(F)$ is a non-convex function of $F$. 
\end{corollary}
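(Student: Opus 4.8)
The plan is to split the argument into two parts: (i) verify the closed-form expression for $W(F)$ asserted in the statement, and (ii) disprove convexity by exhibiting two zero-energy matrices whose midpoint has strictly positive energy.

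For (i): fix $F$ and note that since $a\in(0,1)$ we have $1-a>0$, so minimizing $W_{BTW}(\bn,F)=\mu\big(|F|^2-(1-a)|F^{T}\bn|^2-c_d\big)$ over $\{|\bn|=1\}$, with $c_d=3a^{1/3}$ in 3D and $2a^{1/2}$ in 2D, amounts to \emph{maximizing} $|F^{T}\bn|^{2}=\bn\cdot(FF^{T})\bn$. By the Rayleigh quotient characterization this maximum equals the largest eigenvalue of $FF^{T}$, i.e.\ $\lambda_d^{2}$, where $\lambda_1\le\cdots\le\lambda_d$ are the singular values of $F$. Substituting and using $|F|^2=\tr(F^TF)=\sum_i\lambda_i^2$ together with $\sum_i\lambda_i^2-(1-a)\lambda_d^2=\sum_{i<d}\lambda_i^2+a\lambda_d^2$ yields exactly the stated case expression. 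This step is routine.

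For (ii): I would use the two stress-free ``natural states'' singled out in Proposition~\ref{prop:BTW2Dzero}. In 2D set $F_0=\mathrm{diag}(a^{1/4},a^{-1/4})$ and $F_1=\mathrm{diag}(a^{-1/4},a^{1/4})$; in 3D set $F_0=\mathrm{diag}(a^{1/6},a^{1/6},a^{-1/3})$ and $F_1=\mathrm{diag}(a^{-1/3},a^{1/6},a^{1/6})$. Each of $F_0,F_1$ has singular values equal to the prescribed multiset, so the formula from (i) (equivalently, Proposition~\ref{prop:BTW2Dzero}) gives $W(F_0)=W(F_1)=0$; moreover $F_0\ne F_1$ because $a\ne1$. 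Their midpoint $G:=\tfrac12(F_0+F_1)$ equals $cI$ with $c=\tfrac12(a^{1/4}+a^{-1/4})$ in 2D, and $\mathrm{diag}(d,a^{1/6},d)$ with $d=\tfrac12(a^{1/6}+a^{-1/3})$ in 3D. Evaluating the formula of (i) at $G$ (in 3D note $a<1\Rightarrow d\ge a^{-1/12}>a^{1/6}$, so the ordered singular values are $a^{1/6}\le d\le d$) gives $W(G)=\mu\big((1+a)c^2-2a^{1/2}\big)$ in 2D and $W(G)=\mu\big((1+a)d^2-2a^{1/3}\big)$ in 3D. Positivity is then an AM--GM check: $c\ge(a^{1/4}a^{-1/4})^{1/2}=1$ gives $(1+a)c^2\ge1+a>2\sqrt a$, and $d^2\ge a^{-1/6}$ combined with $1+a>2\sqrt a=2a^{1/3}a^{1/6}$ gives $(1+a)d^2>2a^{1/3}$; both strict inequalities use $a\ne1$. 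Hence $W\big(\tfrac12F_0+\tfrac12F_1\big)=W(G)>0=\tfrac12W(F_0)+\tfrac12W(F_1)$, contradicting Jensen's inequality, so $W$ is not convex.

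I do not expect a genuine obstacle here; the only point worth flagging is that $W$ must be regarded as a function on the full matrix space $\mathbb{R}^{d\times d}$ (or on $GL^+$), not on the incompressibility set $\{\det F=1\}$ — indeed $\det G=c^2>1$ in 2D and $\det G>1$ in 3D, and in fact no nontrivial convex combination of two distinct matrices of a single energy well can have determinant $1$, so leaving that constraint is unavoidable. If one prefers, an even shorter variant restricts $W$ to the line $t\mapsto I+t\,\mathrm{diag}(1,-1)$ in 2D (and $t\mapsto I+t\,\mathrm{diag}(1,-1,0)$ in 3D): along it $W$ is an even function of $t$ whose one-sided derivative at $t=0$ is $-2\mu(1-a)<0$, so $t=0$ is a strict local maximum, which already contradicts convexity of the restriction.
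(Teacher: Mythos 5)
Your proof is correct, but it takes a different route from the paper's, and one remark you make in passing is actually false. The paper's one-line proof of Corollary~\ref{cor:nonconvexBTW} takes the \emph{sheared} matrices $F_{\pm}$ from Proposition~\ref{prop:ShearEnergy2D} at a fixed $0<t<1$: both lie in the zero-energy well, their average is the \emph{unsheared} diagonal matrix at the same $t$, which Proposition~\ref{prop:ShearEnergy2D} shows has strictly positive energy. You instead take the two diagonal matrices obtained by permuting the singular values of a natural state. Both constructions work, and yours has the virtue of being self-contained (it needs only Proposition~\ref{prop:BTW2Dzero}, not the shear computation) and of coming with a clean AM--GM verification; the paper's has the virtue of being directly the mechanism invoked for stripe formation.

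The genuinely wrong step is your aside that ``no nontrivial convex combination of two distinct matrices of a single energy well can have determinant $1$, so leaving that constraint is unavoidable.'' The paper's pair is exactly a counterexample. In 2D, take
\begin{equation*}
 F_{\pm}=\begin{pmatrix}\lambda(t)&\pm\delta\\[2pt]0&1/\lambda(t)\end{pmatrix},\qquad 0<t<1,
\end{equation*}
with $\delta$ as in \eqref{eqn:shearingdelta}'s 2D analogue. Then $\det F_{\pm}=1$; moreover $\tr(F_{\pm}F_{\pm}^{T})=\lambda^2+\delta^2+\lambda^{-2}=a^{1/2}+a^{-1/2}$ and $\det(F_{\pm}F_{\pm}^{T})=1$, so $F_{\pm}F_{\pm}^{T}$ has eigenvalues $\{a^{1/2},a^{-1/2}\}$ and both $F_{+}$ and $F_{-}$ lie in the energy well. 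Their midpoint is $\mathrm{diag}(\lambda(t),1/\lambda(t))$, which still has determinant $1$ and, for $0<t<1$, strictly positive energy. The heuristic you relied on forgets that the energy well is the full orbit $\{R_1\,\mathrm{diag}(a^{-1/4},a^{1/4})\,R_2:R_1,R_2\in SO(2)\}$, not just the diagonal representatives you chose; that orbit contains chords whose midpoints remain on $\{\det F=1\}$. So the paper's argument actually establishes the sharper statement that $W$ fails to be convex even along a segment contained in the incompressibility manifold, which your construction does not. None of this affects the validity of your main computation, which does prove non-convexity of $W$ on $\mathbb{R}^{d\times d}$ and is carried out correctly in both parts (i) and (ii); and your one-dimensional variant at the end is also a valid shortcut.
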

\begin{proof}
  Take $0<t<1$, and $\lambda(t)$ and $F_{\pm}$ as defined Proposition \ref{prop:ShearEnergy2D}. Then we have
 \begin{eqnarray*}
  W(0.5F_{+} + 0.5F_{-}) > 0
 \end{eqnarray*}
 while
 \begin{eqnarray*}
  0.5W(F_{+}) + 0.5W(F_{-}) = 0
 \end{eqnarray*}
 Therefore $W(F)$ is a non-convex function of $F$.
\end{proof}

Proposition \ref{prop:ShearEnergy2D} tells us that introducing shearing 
might lower the BTW energy (\ref{eqn:BTW3D}) or (\ref{eqn:BTW2D}). 
On the other hand, because of the constraint of the clamps,
global shearing is not possible, therefore, local shearing
is developed in a zig-zag way, and that's why stripe domains occur.

In \cite{conti2002soft}, DeSimone et al. did the finite element 
study of the clamped pulling of liquid crystal elastomer. 
They started from the 3D BTW energy (\ref{eqn:BTW3D}),
and got rid of the variable $\bn$ by taking it 
to be always $\bn=\bn(F)$, the eigenvector of the largest eigenvalue $a^{-2/3}$
of the matrix $F F^T$. So they got
\begin{equation} \label{eqn:degBTW3D}
 W(F) = \mu\left(\lambda_1^2(F)+\lambda_2^2(F)+a\lambda_3^2(F)-3a^{1/3}\right)
\end{equation}
It turned out that the energy function (\ref{eqn:degBTW3D}) is no longer a convex function of $F$
(Corollary \ref{cor:nonconvexBTW}).
Therefore, they replaced it by its quasi-convex envelope
\begin{align}
 W_{qc}(F): = \inf_{\stackrel{y \in W^{1,\infty}}{\mathrm{det}(\nabla y(x))=1}} 
 \left\{ \frac{1}{|\Omega|} \int_{\Omega} W(\nabla y(x))dx: \quad
   y(x)=Fx \text{ on }\partial\Omega  \right\}
\end{align}
They pointed out, that the use of $W_{qc}(F)$ in the numerical computations allows one
to resolve only the \textit{macroscopic} length scale, with the (possibly infinitesimal)
microscopic scale already accounted for in $W_{qc}(F)$. 

It turned out that their approach was very successful, in that they successfully captured
the emerging and migrating of the stripe domain regions, in exactly the same way
as those observed experimentally in \cite{zubarev1999monodomain}.
However, their approach is limited for several reasons:
\begin{itemize}
 \item Although they were able to tell which part of the region is a ``stripe-domain region'',
   they couldn't tell how many stripes lie in that region. Actually, their model
   permits infinitely many stripes in that region, which is unphysical.
 \item They were only able to observe the ``soft elasticity'', not the ``semi-soft elasticity''.
 That is, the plateau region occurs immediately upon the pulling (Figure \ref{fig:desimone-ss}), 
 instead of after some critical strain
 as observed experimentally. 
\end{itemize}
 \begin{figure}[htbp]
 \centering
 \includegraphics[width=8cm]{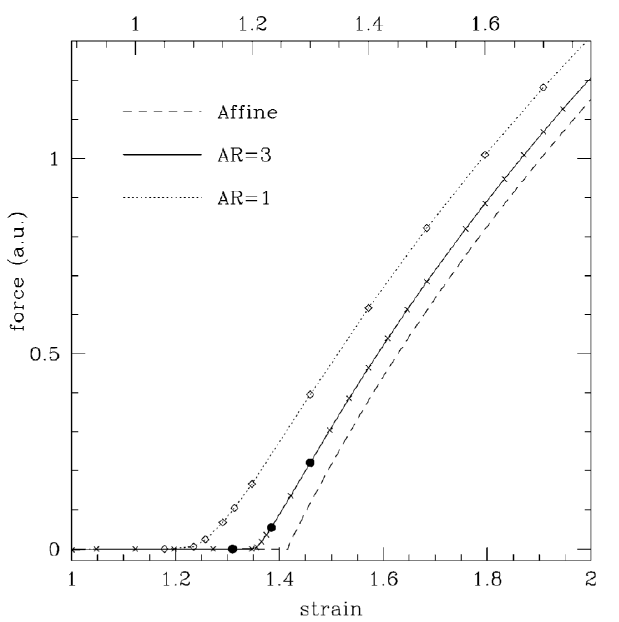}
 \caption{The stress-strain curve from Desimone's numerical simulation. 
 Soft elasticity instead of semi-soft elasticity was observed. Reproduced from \cite{conti2002soft}.}  \label{fig:desimone-ss}
 \end{figure}

In this work, we aim to resolve the above issues. 
We combine the 2D BTW model (\ref{eqn:BTW2D}) and the Oseen-Frank model (\cite{frank1958liquid}), 
and get the following energy functional for liquid crystal elastomers:
\begin{align*}
 \Pi(\bu,\bn) ={} & \int_{\Omega}c \left(|F|^2-(1-a)|F^T \bn|^2 \right)+b|\nabla \bn|^2 \nonumber \\
              &-\int_{\Omega} \f\cdot \bu -\int_{\Gamma_2} \bg\cdot \bu da
\end{align*}
where $\f$ is the body force, while $\bg$ is the traction force on the $\Gamma_2$
part of the boundary $\partial \Omega$.
We assume $\bu \in H^1(\Omega)$, and satisfies $\det(I+\nabla \bu)=1$ a.e. in $\Omega$;
while $\bn \in H^1(\Omega)$, and satisfies $|\bn|=1$ a.e. in $\Omega$.
Let $L$ be the characteristic length scale of the domain $\Omega$, we get the following
non-dimensionalized energy
\begin{align*}
 \tilde{\Pi}(\bu,\bn) = {} & \int_{\tilde{\Omega}} (|F|^2-(1-a)|F^T \bn|^2)+\tilde{b}|\nabla \bn|^2 \nonumber \\
              &-\int_{\tilde{\Omega}} \tilde{\f}\cdot \bu -\int_{\tilde{\Gamma}_2} \tilde{\bg}\cdot \bu da
\end{align*}
where $\tilde{\Omega}$ is congruent with $\Omega$ with characteristic length scale $1$, 
and $\tilde{\Gamma}_2 \subset \partial\tilde{\Omega}$ 
is the image of $\Gamma_2$.
Also $\tilde{b}=\frac{b}{c L^2}$, $\tilde\f=\frac\f{c L}$, and $\tilde\bg = \frac\bg{c}$.

The rest of the paper is organized as the following. In section \ref{sec:continous-problem},
we investigated the properties of the continuous problem, 
such as the existence of minimizer, the equilibrium equation
and its linearization, the stress-free state, 
the well-posedness of the linearized equation etc.
In section \ref{sec:discretization}, we investigated
problems related to the discretization, 
such as the existence of minimizer, 
the equilibrium equation and its linearization, 
the well-posedness of the linearized equation,
and the existence and uniqueness of
the Lagrange multipliers etc. 
In section \ref{sec:numerical-results},
we presented the simulation results
for the ``clamped-pulling'' experiment,
including the results of the inf-sup tests,
the convergence rates, and the stress-strain curve etc.
Finally, in section \ref{sec:discussion},
we discussed future directions.

\section{The Continuous Problem}\label{sec:continous-problem}
\subsection{Existence of minimizer}
Let 
\begin{equation}
 H^1(\Omega, S^1) = \{\bn\in H^1(\Omega, \mathbb{R}^2): |\bn|=1 \text{ almost everywhere in } \Omega\}
\end{equation}
and
\begin{equation}
 K = \{\bu\in H^1(\Omega, \mathbb{R}^2): \det (I+\nabla \bu)=1 \text{ almost everywhere in } \Omega\}
\end{equation}
Define the admissible set 
\begin{equation}
 \mathcal{A}(\bu_0, \bn_0) = \{\bu\in K, \bn\in H^1(\Omega, S^1): 
 \bu=\bu_0 \text{ on }\Gamma_0, \bn=\bn_0 \text{ on }\Gamma_1\}
\end{equation}
Define the energy functional
\begin{align}
 \Pi(\bu,\bn) ={}& \int_{\Omega} \left(|F|^2-(1-a)|F^T \bn|^2\right)+b|\nabla \bn|^2 \nonumber \\
              &-\int_{\Omega} \f\cdot \bu -\int_{\Gamma_2} \bg\cdot \bu da
\end{align}
where $F=I+\nabla \bu$.
Then our problem is
\begin{align} \label{problem:minBTWOF}
 \text{Find } (\bu,\bn)\in \mathcal{A}(\bu_0, \bn_0) \text{ minimizing } \Pi(\bu,\bn).
\end{align}

\begin{lemma}\label{lemma:coerBTW}
 Assume $|\bn|=1$, we have
 \begin{equation}
  \left(|F|^2-(1-a)|F^T \bn|^2\right) \geq a|F|^2
 \end{equation}
\end{lemma}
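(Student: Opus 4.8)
The plan is to reduce the claim to the elementary linear-algebra inequality $|F^T\bn|^2 \le |F|^2$ whenever $|\bn| = 1$, and then invoke $a \in (0,1)$. Concretely, since $|F|^2 - (1-a)|F^T\bn|^2 = a|F|^2 + (1-a)\bigl(|F|^2 - |F^T\bn|^2\bigr)$, and $1-a > 0$, the lemma follows as soon as we show $|F^T\bn|^2 \le |F|^2$; there is no remaining interaction with the constant $a$.

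For that inequality I would use the operator-norm/eigenvalue comparison. First, write $|F^T\bn|^2 = \bn^T F F^T \bn$, which for a unit vector $\bn$ is a Rayleigh quotient of the symmetric positive-semidefinite matrix $FF^T$; hence $|F^T\bn|^2 \le \lambda_{\max}(FF^T)$. Second, observe $|F|^2 = \tr(F^TF) = \tr(FF^T) = \sum_i \lambda_i(FF^T) \ge \lambda_{\max}(FF^T)$, because every eigenvalue of $FF^T$ is non-negative. Chaining the two gives $|F^T\bn|^2 \le \lambda_{\max}(FF^T) \le |F|^2$, which is exactly what is needed. (Equivalently one can phrase this as $|F^T\bn| \le \|F^T\|_{\mathrm{op}}\,|\bn| = \|F\|_{\mathrm{op}} \le \|F\|_{\mathrm{Fro}} = |F|$, but the eigenvalue version is the cleanest to write.)

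Honestly there is no serious obstacle here: the statement is a coercivity bookkeeping lemma, and the only "step" is recognizing that the dependence on $a$ factors out cleanly and that the remaining content is the comparison between a quadratic form evaluated on a unit vector and the trace. If one wanted to be fully explicit one could avoid eigenvalues entirely by diagonalizing: pick an orthonormal basis in which $FF^T = \mathrm{diag}(\mu_1,\dots,\mu_d)$ with $\mu_i \ge 0$, write $\bn = \sum_i c_i e_i$ with $\sum_i c_i^2 = 1$, and note $|F^T\bn|^2 = \sum_i \mu_i c_i^2 \le \bigl(\max_i \mu_i\bigr)\sum_i c_i^2 \le \sum_i \mu_i = |F|^2$. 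Either way the proof is a few lines, and equality holds precisely when $FF^T$ has a single nonzero eigenvalue (rank one) and $\bn$ lies in its range — a remark worth stating since it connects to the zero-energy characterization in Proposition \ref{prop:BTW2Dzero}.
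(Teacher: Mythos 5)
Your proof is correct and uses essentially the same linear-algebra idea as the paper: bound $|F^T\bn|^2$ by the largest eigenvalue of $FF^T$ and compare to the trace $|F|^2=\sum_i\lambda_i(FF^T)$. The only cosmetic difference is that the paper first passes through the sharper intermediate bound $\lambda_1^2+a\lambda_2^2$ (the minimum of the left side over unit $\bn$, with $\lambda_1^2\le\lambda_2^2$ the eigenvalues of $FF^T$) before coarsening to $a(\lambda_1^2+\lambda_2^2)=a|F|^2$, whereas you factor out $a|F|^2$ at the start and jump straight to the coarse bound $|F^T\bn|^2\le|F|^2$; both routes rely on the identical Rayleigh-quotient/trace comparison, and your equality remark matches the paper's Proposition \ref{prop:BTW2Dzero}.
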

\begin{proof}
 Let $\lambda_1^2 \leq \lambda_2^2$ be the two eigenvalues of the real symmetric matrix $F F^T$.
 Since $\bn$ is in $H^1(\Omega, S^1)$, by Proposition \ref{prop:BTW2Dzero}, we have
 \begin{align*}
  \left(|F|^2-(1-a)|F\bn|^2\right) 
  \geq{}& \lambda_1^2 + a\lambda_2^2 \\
  \geq{}& a(\lambda_1^2+\lambda_2^2) \\
  ={}& a\tr (F F^T) \\
  ={}& a|F|^2 
 \end{align*}
\end{proof}

\begin{lemma} \label{lemma:BTWconvex}
 Assume $|\bn|=1$, and $0<a<1$. The function
 \begin{equation}
  L(F, \bn) = |F|^2-(1-a)|F^T \bn|^2
 \end{equation}
 is convex in $F$.
\end{lemma}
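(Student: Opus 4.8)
The plan is to exploit an orthogonal decomposition of $|F|^2$ adapted to the unit vector $\bn$. Written naively, $L(\cdot,\bn)$ is a convex quadratic $|F|^2$ minus a concave quadratic $(1-a)|F^T\bn|^2$, so convexity is not obvious; the trick is to rewrite $|F|^2$ so that the subtracted term is absorbed. Since $|\bn|=1$, fix $\bn^{\perp}\in\RR^2$ with $\{\bn,\bn^{\perp}\}$ an orthonormal basis of $\RR^2$. Because $\bn\bn^T+\bn^{\perp}(\bn^{\perp})^T=I$, for every $2\times 2$ matrix $F$ we get the Pythagorean identity
\[
  |F|^2=\tr(FF^T)=\bn^TFF^T\bn+(\bn^{\perp})^TFF^T\bn^{\perp}=|F^T\bn|^2+|F^T\bn^{\perp}|^2 .
\]

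Substituting this into $L$ and cancelling,
\[
  L(F,\bn)=|F^T\bn|^2+|F^T\bn^{\perp}|^2-(1-a)|F^T\bn|^2=a\,|F^T\bn|^2+|F^T\bn^{\perp}|^2 .
\]
For any fixed vector $\bv$, the map $F\mapsto F^T\bv$ is linear in the entries of $F$, so $F\mapsto|F^T\bv|^2$ is the composition of the convex function $|\cdot|^2$ with a linear map, hence convex in $F$ (indeed it is the nonnegative quadratic form acting as $\bv\bv^T$ on each column of $F$). Since $0<a<1$, both coefficients $a$ and $1$ are positive, so $L(\cdot,\bn)$ is a nonnegative linear combination of convex functions of $F$ and is therefore convex in $F$.

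There is really no hard step here once the change of basis is in hand; the only points to state carefully are the identity in the first paragraph (which uses only $|\bn|=1$, i.e. the completeness relation for the orthonormal frame $\{\bn,\bn^{\perp}\}$) and the fact that convexity is asserted in $F$ with $\bn$ frozen. I would also remark that the same display reproves Lemma~\ref{lemma:coerBTW}, since $a|F^T\bn|^2+|F^T\bn^{\perp}|^2\ge a\big(|F^T\bn|^2+|F^T\bn^{\perp}|^2\big)=a|F|^2$, and in fact shows $L(\cdot,\bn)$ is strictly convex because $a>0$.
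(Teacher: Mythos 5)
Your proof is correct, and it takes a genuinely different (and arguably cleaner) route than the paper's. The paper encodes $L(F)=\tr(FAF^T)$ with $A = I-(1-a)\bn\bn^T$ and verifies the convexity (secant) inequality directly: expanding $\alpha L(F_1)+(1-\alpha)L(F_2)-L(\alpha F_1+(1-\alpha)F_2)$ collapses to $\alpha(1-\alpha)\,\tr\bigl[(F_1-F_2)A(F_1-F_2)^T\bigr]$, which is then shown to be nonnegative. You instead diagonalize $A$ explicitly by completing $\bn$ to an orthonormal frame $\{\bn,\bn^{\perp}\}$, obtaining $L(F)=a|F^T\bn|^2+|F^T\bn^{\perp}|^2$, so $L$ is visibly a nonnegative combination of squares of linear maps of $F$; convexity (and, since $a>0$, strict convexity) is then immediate, and as you note the same line re-proves Lemma~\ref{lemma:coerBTW}. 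The two arguments both reduce to the positive definiteness of $A$ when $0<a<1$, but your decomposition makes this manifest with essentially no algebra; the paper's form is dimension-agnostic as written, whereas your version in 3D would use two orthogonal complements $\be_1,\be_2$ with $L=a|F^T\bn|^2+|F^T\be_1|^2+|F^T\be_2|^2$ (a trivial adaptation). One incidental observation: the paper's intermediate bound $\ge 2\alpha(1-\alpha)\sqrt{a}$ citing Proposition~\ref{prop:BTW2Dzero} is not actually valid for $F_1-F_2$ (which need not be unimodular); the correct justification is simply nonnegativity via Lemma~\ref{lemma:coerBTW}, which is exactly what your orthogonal decomposition exhibits.
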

\begin{proof}
 Let 
 \begin{equation}
  A(\bn) = I-(1-a)\bn\bn^T
 \end{equation}
 Then we have
 \begin{equation}
  L = \tr (FAF^T)
 \end{equation}
 Then for any $F_1, F_2 \in \mathbb{M}^{3\times3}$, and $0\leq \alpha \leq 1$, we have
 \begin{align*}
  &[\alpha L(F_1)+(1-\alpha)L(F_2)]-L(\alpha F_1+(1-\alpha)F_2) \\
  ={}& \alpha\tr (F_1 A F_1^T) + (1-\alpha)\tr (F_2 A F_2^T) \\
    & - \tr [(\alpha F_1+(1-\alpha)F_2)A(\alpha F_1+(1-\alpha)F_2)^T] \\
  ={}& \alpha(1-\alpha)\tr [(F_1-F_2)A(F_1-F_2)] \\
  ={}& \alpha(1-\alpha)\left(|F_1-F_2|^2-(1-a)|(F_1-F_2)^T \bn|^2\right) \\
  \geq{}& 2\alpha(1-\alpha)\sqrt{a} \\
  \geq{}& 0
 \end{align*}
 where we have used Proposition \ref{prop:BTW2Dzero}.
 Therefore $L$ is a convex function of $F$.
\end{proof}

\begin{remark}
 It's easy to see that Lemma \ref{lemma:coerBTW} and Lemma \ref{lemma:BTWconvex}
 holds for 3D, too.
\end{remark}

\begin{theorem} \label{thm:detconv}
 Let $\Omega$ be a nonempty, bounded, open subset of $\mathbb{R}^d$. 
 \begin{itemize}
  \item If $d=2$, suppose we have $\bu_k \rightharpoonup \bu$ in $W^{1,s}$ with $s>\frac{4}{3}$, 
 then we have $\mathrm{det}(I+\nabla \bu_k) \rightarrow \mathrm{det}(I+\nabla \bu)$ in $\mathcal{D}'(\Omega)$. 
  \item If $d=3$, 
 \begin{itemize}
  \item suppose we have $\bu_k \rightharpoonup \bu$ in $W^{1,s}$ with $s>\frac{3}{2}$, 
  then we have $\mathrm{adj}(I+\nabla \bu_k)_{ij} \rightarrow \mathrm{adj}(I+\nabla \bu)_{ij}$ in $\mathcal{D}'(\Omega)$.
  \item suppose we have $\bu_k \rightharpoonup \bu$ in $W^{1,s}$, 
 and $\mathrm{adj}(I+\nabla \bu_k) \rightharpoonup \mathrm{adj}(I+\nabla \bu)$ in $L^q(\Omega;\mathbb{M}^3)$
 with $s>1, q>1$ and $\frac{1}{s}+\frac{1}{q}<\frac{4}{3}$,  
 then we have $\mathrm{det}(I+\nabla \bu_k) \rightarrow \mathrm{det}(I+\nabla \bu)$ in  $\mathcal{D}'(\Omega)$. 
 \end{itemize}
 \end{itemize}
\end{theorem}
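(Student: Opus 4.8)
The statement is the classical weak continuity of the subdeterminants of a gradient field, due in essence to Reshetnyak and Ball, and I would prove it by exhibiting each minor in question as a \emph{null Lagrangian} --- a distributional divergence of lower-order products --- and then passing to the limit by a compensated-compactness argument. A preliminary observation: in dimension two the non-affine part of $\mathrm{det}(I+\nabla\bu_k)$ is exactly $\mathrm{det}\,\nabla\bu_k$, and in dimension three the non-affine parts of the entries of $\mathrm{adj}(I+\nabla\bu_k)$ are exactly the $2\times2$ minors of $\nabla\bu_k$; since affine terms converge in $\mathcal D'(\Omega)$ at once (from $\nabla\bu_k\rightharpoonup\nabla\bu$ in $L^s$), it suffices to treat these genuine minors. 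For the $d=3$ determinant it is cleaner not to decompose but to keep $\varphi_k:=\mathrm{id}+\bu_k$ whole, so that the hypothesis $\mathrm{adj}(I+\nabla\bu_k)\rightharpoonup\mathrm{adj}(I+\nabla\bu)$ in $L^q$ is used verbatim (see below).

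Next I would record the null-Lagrangian identities, which hold pointwise for smooth maps by cancellation of mixed second derivatives:
\[
 \mathrm{det}\,\nabla\bu=\partial_1\big(u_1\,\partial_2 u_2\big)-\partial_2\big(u_1\,\partial_1 u_2\big)\qquad(d=2),
\]
the $2\times2$ minor of $\nabla\bu$ on rows $\{p,q\}$ and columns $\{m,n\}$ ($d=3$) equals $\partial_m(u_p\,\partial_n u_q)-\partial_n(u_p\,\partial_m u_q)$, and, using the Piola identity $\sum_i\partial_i(\mathrm{adj}\,\nabla\varphi)_{ij}=0$,
\[
 \mathrm{det}\,\nabla\varphi=\sum_i\partial_i\big(\varphi_j\,(\mathrm{adj}\,\nabla\varphi)_{ij}\big)\qquad(d=3,\ j\ \text{fixed}).
\]
The content is that these identities persist in $\mathcal D'(\Omega)$ under the stated hypotheses (and that the objects involved are then genuine distributions). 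Using $W^{1,s}\hookrightarrow L^{s^*}_{\mathrm{loc}}$ with $\tfrac1{s^*}=\tfrac1s-\tfrac1d$, together with H\"older, the bilinear products $u\,\partial u$ lie in $L^1_{\mathrm{loc}}$ precisely when $\tfrac2s-\tfrac1d\le1$, i.e. $s\ge\tfrac43$ for $d=2$ and $s\ge\tfrac32$ for $d=3$, while $\varphi_j\,(\mathrm{adj}\,\nabla\varphi)_{ij}\in L^1_{\mathrm{loc}}$ when $\tfrac1{s^*}+\tfrac1q\le1$, i.e. $\tfrac1s+\tfrac1q\le\tfrac43$; granted this integrability, the identities (and the distributional Piola identity) follow by mollifying $\bu$ and passing to the limit.

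With the divergence-form representations in hand I fix $\psi\in C_c^\infty(\Omega)$, integrate by parts so that the derivative lands on $\psi$, and look at integrals $\int(\text{zeroth-order factor})(\text{first-order or cofactor factor})\,\partial\psi$. The zeroth-order factor is a component $u_{\cdot,k}$, or in the $d=3$ determinant case $(\varphi_k)_j=x_j+u_{j,k}$, bounded in $W^{1,s}$; by Rellich--Kondrachov it converges to its limit strongly in $L^p_{\mathrm{loc}}$ for every $p<s^*$ (and for every $p<\infty$ when $s\ge d$, which covers the $H^1$ case of interest here). The other factor is a first derivative $\partial u_{\cdot,k}$, bounded and weakly convergent in $L^s$, in the $d=2$ and the $d=3$-cofactor cases, and is $(\mathrm{adj}\,\nabla\varphi_k)_{ij}=\mathrm{adj}(I+\nabla\bu_k)_{ij}$, weakly convergent in $L^q$ by hypothesis, in the $d=3$-determinant case. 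Choosing $p$ equal to the H\"older conjugate of the exponent of the second factor makes the product strong-times-weak, hence convergent in $L^1_{\mathrm{loc}}$; such a $p<s^*$ exists exactly when $\tfrac{s}{s-1}<s^*$, i.e. $s>\tfrac43$ ($d=2$) resp. $s>\tfrac32$ ($d=3$ cofactor), and when $\tfrac{q}{q-1}<\tfrac{3s}{3-s}$, i.e. $\tfrac1s+\tfrac1q<\tfrac43$ ($d=3$ determinant). Hence the integrals converge; re-assembling with the affine terms from the first paragraph gives $\mathrm{det}(I+\nabla\bu_k)\to\mathrm{det}(I+\nabla\bu)$ and $\mathrm{adj}(I+\nabla\bu_k)_{ij}\to\mathrm{adj}(I+\nabla\bu)_{ij}$ in $\mathcal D'(\Omega)$.

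The main obstacle is the middle step: showing that the null-Lagrangian and Piola identities, elementary for smooth maps, survive the descent to low Sobolev regularity. This is exactly where the thresholds $s>\tfrac43$, $s>\tfrac32$, and $\tfrac1s+\tfrac1q<\tfrac43$ are forced --- one needs just enough integrability of the bilinear products $u\,\partial u$ and $\varphi\,\mathrm{adj}\,\nabla\varphi$ for the mollification argument to close --- and it is the technical heart of the Reshetnyak--Ball theorem. Once this is granted, the remaining limit passage is a routine strong-times-weak argument via Rellich--Kondrachov.
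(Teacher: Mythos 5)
The paper does not prove Theorem \ref{thm:detconv} itself; it simply cites Ball's 1976 paper, and your sketch is precisely the null-Lagrangian / compensated-compactness argument of Reshetnyak and Ball that the cited reference carries out. Your decomposition into affine and genuine-minor parts, the divergence-form identities with the Piola identity, the mollification step to extend them to low Sobolev regularity, and the strong-times-weak limit passage via Rellich--Kondrachov (with the sharp exponent thresholds $s>\tfrac43$, $s>\tfrac32$, and $\tfrac1s+\tfrac1q<\tfrac43$ correctly derived) all match the standard proof the paper defers to, so this is essentially the same approach.
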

For the proof, please see John Ball \cite{ball1976convexity}.

\begin{theorem} \label{thm:cont-exist-BTWOF}
 There exists solution to the problem (\ref{problem:minBTWOF}).
\end{theorem}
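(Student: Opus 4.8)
The plan is to run the direct method of the calculus of variations, the two genuinely non-routine points being the passage to the limit in the nonlinear incompressibility constraint and the weak lower semicontinuity of the coupled elastic term.

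\emph{Coercivity and a minimizing sequence.} Assume $b>0$, $\f\in L^2(\Omega)$, $\bg\in L^2(\Gamma_2)$, $\Gamma_0$ of positive surface measure, and $\mathcal A(\bu_0,\bn_0)\neq\emptyset$. For $(\bu,\bn)\in\mathcal A(\bu_0,\bn_0)$, Lemma~\ref{lemma:coerBTW} gives
\[
 \Pi(\bu,\bn)\ \geq\ a\,\|I+\nabla\bu\|_{L^2}^2+b\,\|\nabla\bn\|_{L^2}^2-\int_\Omega\f\cdot\bu-\int_{\Gamma_2}\bg\cdot\bu\,da .
\]
Estimating the load terms by the Cauchy--Schwarz and trace inequalities and absorbing them with Young's inequality, then using $\|I+\nabla\bu\|_{L^2}^2\geq\tfrac12\|\nabla\bu\|_{L^2}^2-|I|^2\,|\Omega|$ together with the Poincar\'e inequality (valid since $\bu=\bu_0$ on $\Gamma_0$), one gets $\Pi(\bu,\bn)\geq c_1\big(\|\bu\|_{H^1}^2+\|\nabla\bn\|_{L^2}^2\big)-c_2$ with $c_1>0$. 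Since $|\bn|=1$ a.e.\ forces $\|\bn\|_{L^2}^2=|\Omega|$, this shows $\Pi$ is bounded below on the admissible set and that any minimizing sequence $(\bu_k,\bn_k)$ is bounded in $H^1(\Omega,\RR^2)\times H^1(\Omega,\RR^2)$.

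\emph{Compactness and admissibility of the limit.} Pass to a subsequence (not relabeled) with $\bu_k\rightharpoonup\bu$, $\bn_k\rightharpoonup\bn$ in $H^1$. By Rellich--Kondrachov, $\bu_k\to\bu$ and $\bn_k\to\bn$ strongly in $L^2$ and, along a further subsequence, a.e.\ in $\Omega$; hence $|\bn|=1$ a.e., i.e.\ $\bn\in H^1(\Omega,S^1)$. For the incompressibility constraint I would invoke Theorem~\ref{thm:detconv} with $d=2$ and $s=2>\tfrac43$: $\det(I+\nabla\bu_k)\to\det(I+\nabla\bu)$ in $\mathcal D'(\Omega)$; since $\det(I+\nabla\bu_k)\equiv1$, the distributional limit equals the constant $1$, so $\det(I+\nabla\bu)=1$ a.e.\ and $\bu\in K$. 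The Dirichlet data survive in the limit because the trace $H^1(\Omega)\to L^2(\partial\Omega)$ is a bounded (indeed compact) linear operator, hence weakly continuous; therefore $(\bu,\bn)\in\mathcal A(\bu_0,\bn_0)$.

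\emph{Weak lower semicontinuity.} The load functional $\bu\mapsto\int_\Omega\f\cdot\bu+\int_{\Gamma_2}\bg\cdot\bu\,da$ is bounded and linear on $H^1$, hence weakly continuous, and $b\int_\Omega|\nabla\bn|^2$ is convex in $\nabla\bn$, hence weakly lower semicontinuous. The main obstacle is the coupled term $\int_\Omega L(I+\nabla\bu,\bn)$ with $L(F,\bn)=|F|^2-(1-a)|F^T\bn|^2$: it is \emph{not} jointly convex in $(\bu,\bn)$ (precisely the non-convexity behind soft elasticity, cf.\ Corollary~\ref{cor:nonconvexBTW}), so a joint-convexity argument is unavailable. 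Instead I would use the lower semicontinuity theorem for integrands that are convex in the gradient variable and continuous in the lower-order variable (Ioffe/De~Giorgi type): $L$ is jointly continuous, convex in $F$ by Lemma~\ref{lemma:BTWconvex}, and nonnegative on $\{|\bn|=1\}$ by Lemma~\ref{lemma:coerBTW}, so $\bn_k\to\bn$ a.e.\ together with $\nabla\bu_k\rightharpoonup\nabla\bu$ in $L^2$ yields $\int_\Omega L(I+\nabla\bu,\bn)\leq\liminf_k\int_\Omega L(I+\nabla\bu_k,\bn_k)$. A more elementary route uses convexity of $L$ in $F$ to write $\int L(I+\nabla\bu_k,\bn)\geq\int L(I+\nabla\bu,\bn)+\int D_FL(I+\nabla\bu,\bn){:}(\nabla\bu_k-\nabla\bu)$, the cross term vanishing since $D_FL(I+\nabla\bu,\bn)\in L^2$, and then controls $\int|L(I+\nabla\bu_k,\bn_k)-L(I+\nabla\bu_k,\bn)|\leq 2(1-a)\int|I+\nabla\bu_k|^2\,|\bn_k-\bn|$; this last estimate is the delicate step, since only an $L^2$ bound on $\nabla\bu_k$ is available, which is why the Ioffe-type statement is the cleaner choice. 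Combining the three contributions, $\Pi(\bu,\bn)\leq\liminf_k\Pi(\bu_k,\bn_k)=\inf_{\mathcal A(\bu_0,\bn_0)}\Pi$, and since $(\bu,\bn)\in\mathcal A(\bu_0,\bn_0)$, this pair minimizes $\Pi$, proving Theorem~\ref{thm:cont-exist-BTWOF}.
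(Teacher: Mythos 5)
Your proposal is correct and follows essentially the same route as the paper: coercivity from Lemma~\ref{lemma:coerBTW} plus Poincar\'e and trace bounds to get a bounded minimizing sequence, Ball's result (Theorem~\ref{thm:detconv}, 2D case) to pass to the limit in the determinant constraint, a.e.\ convergence of a subsequence to preserve $|\bn|=1$, and weak lower semicontinuity from convexity of the integrand in the gradient variables via Lemma~\ref{lemma:BTWconvex}. The one place where you are more careful than the paper is the lower semicontinuity step: the paper simply states that the integrand is convex in $(F,P)$ and cites Evans, Section~8.2, Theorem~1, whereas you explicitly flag that $L(F,\bn)$ is \emph{not} jointly convex in $(\bu,\bn)$ (indeed $W_{qc}\neq W$, cf.\ Corollary~\ref{cor:nonconvexBTW}) and that one therefore needs the Tonelli/De~Giorgi/Ioffe form of the theorem, which requires convexity only in the gradient slot and strong (here a.e.) convergence in the lower-order slot. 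That is exactly the hypothesis structure of the Evans result the paper invokes, so the two arguments coincide in substance. Your remark that the ``elementary'' convexity-plus-perturbation route stalls on $\int |I+\nabla\bu_k|^2|\bn_k-\bn|$ because $|F_k|^2$ is only bounded in $L^1$ (no equi-integrability) is a correct diagnosis of why the general theorem is the right tool; the paper does not comment on this. The treatment of the boundary data also differs cosmetically (you use weak continuity of the trace operator, the paper uses Mazur's theorem on closed affine subspaces), but these are interchangeable.
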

\begin{proof}
 Let $m$ be the infimum of $\Pi$ on $\mathcal{A}(\bu_0, \bn_0)$, 
 and let $(\bu_k,\bn_k) \in \mathcal{A}(\bu_0, \bn_0)$ be a minimizing sequence of $\Pi$.
 Obviously $m<+\infty$.
 Thus $\Pi(\bu_k, \bn_k)$ is bounded above.
 By Lemma \ref{lemma:coerBTW}, we have
 \begin{align*} 
  \Pi(\bu_k, \bn_k) 
  \geq{}& \int_{\Omega} a|(I+\nabla \bu_k)|^2+b|\nabla \bn_k|^2 dx \\
       &-\|\f\|_{L^2(\Omega)} \|\bu_k\|_{L^2(\Omega)}-\|\bg\|_{L^2(\Gamma_2)} \|\bu_k\|_{L^2(\Gamma_2)} \\
    \geq{}&  \int_{\Omega} a|(I+\nabla \bu_k)|^2 + b|\nabla \bn_k|^2 dx \\
     & -\left(\frac{1}{\varepsilon}\|\f\|_{L^2(\Omega)}^2 + \varepsilon\|\bu_k\|_{L^2(\Omega)}^2 \right)
     -\left(\frac{1}{\varepsilon}\|\bg\|_{L^2(\Gamma_2)}^2 + \varepsilon\|\bu_k\|_{L^2(\Gamma_2)}^2 \right) \\
   \geq{}&  \int_{\Omega} C_1 |(I+\nabla \bu_k)|^2 + C_2 |\nabla \bn_k|^2 dx - C_3 
 \end{align*} 
  where $\varepsilon >0$ is small and $C_i>0, i=1,2,3$ are constants and 
  we have applied the generalized Poincar$\mathrm{\acute{e}}$ inequality (\cite{CI87}, p281) 
  and the Trace Theorem (\cite{evans1998c}, p258) in the last step.
 Therefore we have $F_k=I+\nabla \bu_k$ and $\nabla \bn_k$ are bounded in $L^2$.
 Now we have that $\nabla(\bu_k-\bu_0)$ is bounded in $L^2$, by the Poincar$\mathrm{\acute{e}}$ inequality, we have
 $\bu_k$ is bounded in $H^1$. On the other hand, since $\nabla \bn_k$ is bounded in $L^2$ and $\bn_k$ 
 is in $H^1(\Omega, S^1)$, we can get that $\bn_k$ is bounded in $H^1$.
 Now since $H^1$ is a reflexive Banach space and $\bu_k$ and $\bn_k$ are bounded in $H^1$, 
 we can find a subsequence of $\bu_k$ and a subsequence of $\bn_k$
 such that they are weakly convergent in $H^1$. We still denote them as $(\bu_k, \bn_k)$,
 and assume $\bu_k \rightharpoonup \bu$, $\bn_k \rightharpoonup \bn$.
 
 Now that $\bu_k \rightharpoonup u$ in $H^1$, by Theorem \ref{thm:detconv}, we have
 $\det (I+\nabla \bu_k) \rightarrow \det (I+\nabla \bu)$ in $\mathcal{D}'(\Omega)$. 
 Since $\det (I+\nabla \bu_k)=1$ a.e., we have $\det (I+\nabla \bu)=1$ a.e. in $\Omega$
 \footnote{This is because, by definition, we have 
 \begin{equation*}
 \langle\det (I+\nabla \bu_k), \phi\rangle \rightarrow \langle\det (I+\nabla \bu), \phi\rangle
 \end{equation*} 
 in $\mathbb{R}$ for any $\phi\in \mathcal{D}(\Omega)$. Since $\det (I+\nabla \bu_k)=1$ a.e. in $\Omega$ for any $k$, 
 we have
 \begin{equation*}
  \langle\det (I+\nabla \bu)-1, \phi\rangle = 0, \quad \forall \phi \in \mathcal{D}(\Omega)
 \end{equation*}
 Thus $\det (I+\nabla \bu)=1$ a.e. in $\Omega$. 
 }.
 On the other hand, weak convergence in $H^1$ implies strong convergence in $L^2$
 \footnote{This is because the embedding $I: W^{1,p}\rightarrow L^p$ is compact for $1\leq p \leq \infty$ (\cite{evans1998c}, p274),
 while for any compact operator $A: V \rightarrow W$ with $V$ and $W$ Banach spaces, $u_k \rightharpoonup u$ in $V$ implies 
 $A u_k \rightarrow Au$ in $W$ (\cite{CI87}, Theorem 7.1-5 on p348).},
 thus we can find a subsequence of $\bn_k$ that converges point-wise almost everywhere.
 Therefore we have $|\bn|=1$ almost everywhere.  
 Finally since $\bu_k-\bu_0 \in H^1_{0|\Gamma_0}$, and $H^1_{0|\Gamma_0}$ is a closed linear
 subspace of $H^1$, by the Mazur's Theorem, it's weakly closed. 
 Therefore $\bu-\bu_0$ is also in $H^1_{0|\Gamma_0}$, which means $\bu=\bu_0$ on $\Gamma_0$.
 Similarly we have $\bn=\bn_0$ on $\Gamma_1$. Therefore we have $(\bu,\bn)\in \mathcal{A}(\bu_0,\bn_0)$.

 By Lemma \ref{lemma:BTWconvex}, the following function
 \begin{align*}
  L(F, \bn, P) ={}& \left(|F|^2-(1-a)|F^T \bn|^2\right) + b|P|^2 
 \end{align*}
 is a convex function of $F$ and $P$. Therefore by Theorem 1 of section 8.2 of \cite{evans1998c},
 $\Pi$ is weakly lower semi-continuous. 
 Thus we have
 \begin{align*}
  \Pi(\bu,\bn) \leq{}& \liminf_{k\rightarrow \infty} \Pi(\bu_k, \bn_k)\\
            ={}& m
 \end{align*}
 Since $m$ is the infimum of $\Pi$ on $\mathcal{A}$, we conclude that
 \begin{equation}
   \Pi(\bu,\bn) = m
 \end{equation}
 That is, $(\bu,\bn)$ is the minimizer of $\Pi$ on $\mathcal{A}$.
\end{proof}
\begin{remark}
 Theorem \ref{thm:detconv} is crucial in this existence proof. 
 Since in the 3D case, we need the additional condition that $\mathrm{adj}(F_k)$
 is weakly convergent to get the convergence of $\det (F_k)$,
 the above proof cannot be directly extended to 3D.
\end{remark}

\subsection{Equilibrium equation and its linearized system}
Constrained minimization problems can usually be reduced to unconstrained minimization problems
by introducing Lagrange multipliers. 
We consider the following non-dimensionalized energy functional
\begin{align} \label{nondimEnergy}
 \mathcal{E}(\bu,p,\bn,\lambda) ={}& \int_{\Omega} (|F|^2-(1-a)|F^T \bn|^2)
    +b|\nabla \bn |^2 \nonumber\\
     &-p(\det F-1)+\lambda(|\bn|^2-1)-\int_{\Omega}\f\cdot \bu-\int_{\Gamma_2} \bg\cdot \bu
\end{align}
where $p$ is the Lagrange multiplier for the incompressibility constraint $\det(I+\nabla\bu)=1$,
which can be interpreted as ``pressure'', while $\lambda$ is the Lagrange multiplier 
for the unity constraint $|\bn|=1$.

Assume $(\bu,\bn,p,\lambda)$ minimizes the non-dimensionalized energy (\ref{nondimEnergy}).
Then for any test function $\bv$,
the function $\mathcal{E}(\varepsilon) = \mathcal{E}(\bu+\varepsilon\bv,p,\bn,\lambda)$
has a minimum at $\varepsilon = 0$. Thus we have
\begin{equation}
  0 = \frac{d\mathcal{E}}{d\varepsilon} (0)
\end{equation}
from which we can get the following equation
\begin{align*}
 0 ={}&  \int_{\Omega} 2\left(F:\nabla \bv-(1-a)(F^T\bn, \nabla \bv^T \bn)\right) 
       -p\frac{\partial \det }{\partial F}:\nabla \bv \nonumber \\
       & -\int_{\Omega} \f\cdot \bv-\int_{\Gamma_2} \bg\cdot \bv da \\
\end{align*}

By similarly taking the variations $\bn\rightarrow \bn+\varepsilon \bm$,
$p\rightarrow p+\varepsilon q$, or $\lambda \rightarrow \lambda+\varepsilon \mu$,
we get the following set of Euler-Lagrange equations
\begin{align}
 0 ={}&  \int_{\Omega} 2\left(F:\nabla \bv-(1-a)(F^T\bn, \nabla \bv^T \bn)\right) 
       -p\frac{\partial \det }{\partial F}:\nabla \bv \nonumber \\
     & -\int_{\Omega} \f\cdot \bv-\int_{\Gamma_2} \bg\cdot \bv da \nonumber \\
0 ={}& \int_{\Omega} -2(1-a)(F^T \bn, F^T \bm) +2b(\nabla \bm, \nabla \bn)+2\lambda(\bn,\bm) \nonumber \\
0 ={}& \int_{\Omega}-q(\det F-1) \nonumber \\
0 ={}& \int_{\Omega}\mu ((\bn,\bn)-1) \nonumber
\end{align}
in which we seek solutions $(\bu,\bn,p,\lambda) \in {\mathbf H}^1_{\bg|\Gamma_0}
\times {\mathbf H}^1_{{\bg'}|\Gamma_1}\times L^2(\Omega)\times L^2(\Omega)$. Correspondingly, the test functions
$(\bv,\bm,q,\mu)$ are in ${\mathbf H}^1_{0|\Gamma_0}
\times {\mathbf H}^1_{0|\Gamma_1}\times L^2(\Omega)\times L^2(\Omega)$.
Here $H^1_{g|\Gamma_0}$ is defined as
\begin{align*}
 H^1_{g|\Gamma_0} ={}& \{v \in H^1(\Omega;{\mathbb R}), v|_{\Gamma_0} = {g}\}
\end{align*}
and ${\mathbf H}^1_{\bg|\Gamma_0}$ is the corresponding vector version.

We linearize the Euler-Lagrange equation around a solution  $(\bu,\bn,p,\lambda)$, and get the following 
linearized equations:
\begin{align}
 a_1(\bw, \bv)+a_2(\bl, \bv)+b_1(o, \bv) ={}& L_1(\bv)
      \qquad \forall \bv \in \mathbb{V} \label{mixed1}\\
 a_2(\bm, \bw)+a_3(\bl, \bm)+b_2(\gamma, \bm) ={}& L_2(\bm)
      \qquad \forall \bm \in \mathbb{M} \label{mixed2} \\
 b_1(q, \bw)  ={}& L_3(q) \qquad \forall q \in \mathbb{P} \label{mixed3}\\
 b_2(\mu, \bl) ={}& L_4(\mu) \qquad \forall \mu \in \Lambda\label{mixed4}
\end{align}
where $\mathbb{V} = {\mathbf H}^1_{0|\Gamma_0}$, $\mathbb{M} = {\mathbf H}^1_{0|\Gamma_1}$,
$\mathbb{P} = L^2(\Omega)$, $\Lambda = H^{-1}_{\Gamma_1}$,
and $(\bw,\bl,o,\gamma) \in \mathbb{V}\times\mathbb{M}\times\mathbb{P}\times\Lambda$ is the change in the solution,
$(\bv,\bm,q,\mu) \in \mathbb{V}\times\mathbb{M}\times\mathbb{P}\times\Lambda$ is a test function. Here
 $H^{-1}_{\Gamma_1}$ is the dual space of $H_{0|\Gamma_1}^1$. We'll abuse the notation and
use $\|\cdot\|_{-1}$ to also denote the norm
in the $H^{-1}_{\Gamma_1}$ space.
The bilinear forms in the linearized system are defined in the following equations
\begin{align}
 a_1(\bw,\bv)={}&\int_{\Omega} 2(\nabla \bw, \nabla \bv)
-2(1-a)(\nabla \bw^T \bn, \nabla \bv^T \bn) \nonumber \\
  &-p(\frac{\partial^2 \det }{\partial F^2} \nabla \bw):\nabla \bv
\end{align}

\begin{align}
 a_2(\bm,\bv)=&\int_{\Omega} -2(1-a)(F^T \bm, \nabla \bv^T \bn)
-2(1-a)(F^T \bn, \nabla \bv^T \bm) 
\end{align}

\begin{align}
 a_3(\bl,\bm)=&\int_{\Omega} -2(1-a)(F^T \bl, F^T \bm)
              +2b(\nabla \bm, \nabla \bl)
                         +2\lambda(\bl, \bm)
\end{align}

\begin{equation} \label{b1}
 b_1(q, \bw) = \int_{\Omega} -q \frac{\partial \det }{\partial F}: \nabla \bw
\end{equation}

\begin{equation} \label{b2}
 b_2(\mu, \bl) = \int_{\Omega} 2\mu(\bl, \bn)
\end{equation}

\subsection{The stress-free state} \label{sec:stress-free}
From the Euler-Lagrange equations, we can get the following strong form partial differential equations:
\begin{align}
 -\mathrm{div} \sigma ={}& \f  \qquad \text{ in } \Omega \label{strong1}\\
 b\mathrm{div}(\nabla \bn)+(1-a)\bn^T F F^T -\lambda \bn^T ={}& 0
  \qquad \text{ in } \Omega \label{strong2}\\
 \det F ={}& 1 \qquad \text{ in } \Omega \label{strong3} \\
 (\bn, \bn) ={}& 1 \qquad \text{ in } \Omega \label{strong4}
\end{align}
and the following natural boundary conditions:
\begin{align*}
 \sigma \vec{\nu} ={}& \bg       \qquad \text{ on } \partial\Omega\backslash\Gamma_0 \\
 \nabla \bn \vec{\nu} ={}& 0 \qquad \text{ on } \partial\Omega\backslash\Gamma_1
\end{align*}
where the Piola-Kirchhoff stress tensor is
\begin{equation}
 \sigma = 2\left(I-(1-a)\bn\bn^T\right)F-p\frac{\partial \det }{\partial F}
\end{equation}
Notice that if we choose the displacement to be $\bu\equiv 0$ and the director to be $\bn\equiv (0,1)^T$,
then we have
\begin{equation*}
 \sigma = 
 \begin{pmatrix}
  2-p& 0\\
  0&  2a-p\\
 \end{pmatrix}
\end{equation*}
If there is zero body force, equation (\ref{strong1}) implies that $p$ must be a constant. Then there is no way
to make this $\sigma$ zero if $a \neq 1$.
However, it can be verified that the stress-free state can be achieved with
  \begin{equation}
    F = 
    \begin{pmatrix}
     a^{1/4}&        0\\
     0&            a^{-1/4}
    \end{pmatrix}
  \end{equation} 
and
\begin{align*}
  \bn \equiv{}& (0,1)^T \\
  p ={}& 2\sqrt{a} \\
  \lambda ={}& (1-a)/\sqrt{a}
\end{align*}
\begin{remark}
 The above observation implies that the reference configuration (strain-free state) for the BTW model
 is different from the stress-free configuration. 
 It can be verified that if we take the reference state to be the stress-free state instead,
 then the BTW model becomes the so-called ``general BTW'' model
 \begin{equation}
   W_{BTW} = \frac{1}{2} \mu \tr (L_0 F^T L^{-1} F)
 \end{equation}
 where 
 \begin{equation*}
 L(\bn) = a^{-1/2} \bn \bn^T + a^{1/2}(I-\bn \bn^T)
\end{equation*}
 is the so-called \textit{step tensor}, and $L_0 = L(\bn_0)$ is the step tensor at the stress-free state.
\end{remark}

\subsection{Well-posedness of the linearized system}
By adding (\ref{mixed1}) and (\ref{mixed2}) together, and adding (\ref{mixed3})-(\ref{mixed4}) together,
we can reduce the linearized system into a standard saddle point system
\begin{align}
 a(\tilde\bw, \tilde\bv)+b(\tilde{o}, \tilde\bv) ={}& \tilde{L}_1(\tilde\bv)
 \qquad \forall \tilde\bv\in \tilde{\mathbb{V}}
  \label{1foldmixed1} \\
 b(\tilde{q}, \tilde\bw) ={}& \tilde{L}_2(\tilde{q}) \qquad \forall \tilde{q}\in\tilde{\mathbb{P}} \label{1foldmixed2}
\end{align}
where $\tilde{\mathbb{V}} = \mathbb{V} \times \mathbb{M}$, $\tilde{\mathbb{P}} = \mathbb{P} \times \Lambda$, and
\begin{align}
 a(\tilde\bw, \tilde\bv) ={}& a_1(\bw,\bv)+a_2(\bl,\bv)
                                           +a_2(\bm,\bv)+a_3(\bl,\bm) \label{asum}\\
 b(\tilde{q}, \tilde\bv) ={}& b_1(q, \bv)+b_2(\mu, \bm)
\end{align}

The well-posedness of saddle point systems are well established. Here we quote the
so-called Ladyzenskaya-Babuska-Brezzi theorem from \cite{LeTallec}.
\begin{theorem}[Ladyzenskaya-Babuska-Brezzi]\label{Thm-LBB}
Consider the following saddle point problem
\begin{align}
 &a(u,v)+b(p,v)=L_V(v) \qquad \forall v\in \mathbb{V}, u\in \mathbb{V} \label{1foldmixed1-orig} \\
 &b(q,u) = L_P(q) \qquad \forall q\in \mathbb{P}, p\in\mathbb{P} \label{1foldmixed2-orig}
\end{align}
with $\mathbb{V}$ and $\mathbb{P}$ given Hilbert spaces, $L_V$ and $L_P$ belonging to $\mathbb{V}'$
and $\mathbb{P}'$ respectively. Moreover, $a$ and $b$ are continuous bilinear forms defined
on $\mathbb{V}\times\mathbb{V}$ and $\mathbb{P}\times\mathbb{V}$, respectively. Define the operators
\begin{equation*}
 \begin{split}
  \mathscr{B}: & \mathbb{V} \rightarrow \mathbb{P}' \\
     & v \mapsto \mathscr{B}v
     \text{ such that } \langle \mathscr{B}v,q\rangle = b(q,v)
     \qquad \forall q\in \mathbb{P}
 \end{split}
\end{equation*}
\begin{equation*}
 \begin{split}
  \mathscr{A}: & \mathrm{Ker}\mathscr{B} \rightarrow (\mathrm{Ker}\mathscr{B})' \\
     & w \mapsto \mathscr{A}w
     \text{ such that } \langle \mathscr{A}w,v\rangle = a(w,v)
     \qquad \forall v\in \mathrm{Ker}\mathscr{B}
 \end{split}
\end{equation*}
Then the operator $\mathscr{B}$ is onto if and only if the spaces $\mathbb{V}$ and $\mathbb{P}$ satisfy the
following inf-sup condition:
\begin{equation} \label{infsupb-orig}
 \inf_{q\in\mathbb{P}, \|q\|=1} \sup_{{v}\in\mathbb{V}, \|{v}\|=1} b(q, {v}) \geq \beta >0
\end{equation}
Moreover, the mixed problem is well-posed if and only if $\mathscr{B}$ is onto and $\mathscr{A}$ is invertible.
\end{theorem}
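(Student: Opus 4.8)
The plan is to treat this as the classical Brezzi splitting theorem and to reduce it to the closed-range theorem; I would organise the argument into the two equivalences asserted.

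For the first equivalence, between surjectivity of $\mathscr{B}$ and the inf-sup condition, I identify $\mathbb{P}$ with its bidual so that the adjoint $\mathscr{B}' : \mathbb{P} \to \mathbb{V}'$ of $\mathscr{B}$ is given by $\langle \mathscr{B}'q, v \rangle = b(q,v)$. Then $\sup_{\|v\|=1} b(q,v) = \|\mathscr{B}'q\|_{\mathbb{V}'}$, and the inf-sup condition (\ref{infsupb-orig}) says exactly that $\mathscr{B}'$ is bounded below, $\|\mathscr{B}'q\|_{\mathbb{V}'} \geq \beta \|q\|$ for all $q \in \mathbb{P}$. By the closed-range theorem (a corollary of the open mapping theorem), a bounded operator between Banach spaces is onto if and only if its adjoint is injective with closed range, which on Banach spaces is equivalent to the adjoint being bounded below; this yields $\mathscr{B}$ onto $\Leftrightarrow$ (\ref{infsupb-orig}). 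Along the way one records the two facts needed below: $\mathrm{Range}(\mathscr{B}')$ is closed and equals the annihilator $(\mathrm{Ker}\,\mathscr{B})^{\perp}$, and the restriction of $\mathscr{B}$ to $(\mathrm{Ker}\,\mathscr{B})^{\perp}$ is an isomorphism onto $\mathbb{P}'$ whose inverse has norm at most $1/\beta$.

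For the second equivalence, assume first that $\mathscr{B}$ is onto and $\mathscr{A}$ is invertible, and prove well-posedness by block elimination on the decomposition $\mathbb{V} = \mathbb{V}_0 \oplus \mathbb{V}_0^{\perp}$ with $\mathbb{V}_0 := \mathrm{Ker}\,\mathscr{B}$. There is a unique $u_1 \in \mathbb{V}_0^{\perp}$ with $b(q,u_1) = L_P(q)$ for all $q$ and $\|u_1\| \leq \beta^{-1}\|L_P\|$; writing $u = u_0 + u_1$ and testing (\ref{1foldmixed1-orig}) against $v \in \mathbb{V}_0$, where $b(p,v)$ vanishes, forces $\mathscr{A} u_0 = (L_V - a(u_1,\cdot))|_{\mathbb{V}_0}$, which has a unique solution $u_0 \in \mathbb{V}_0$ since $\mathscr{A}$ is invertible. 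Finally the functional $v \mapsto L_V(v) - a(u,v)$ annihilates $\mathbb{V}_0$ by construction, hence lies in $\mathrm{Range}(\mathscr{B}')$, so there is $p$ with $b(p,v) = L_V(v) - a(u,v)$ for all $v$; this $p$ is unique because $\mathscr{B}'$ is injective (as $\mathscr{B}$ has dense range), and $\|p\| \leq \beta^{-1}(\|L_V\| + \|a\|\,\|u\|)$, which together with the earlier bounds gives continuous dependence on $(L_V, L_P)$. For the converse I run this backwards: with $L_V = 0$ the solution satisfies $\mathscr{B}u = L_P$ for arbitrary $L_P$, so $\mathscr{B}$ is onto; with $L_P = 0$ every solution lies in $\mathbb{V}_0$, and extending an arbitrary $\ell \in \mathbb{V}_0'$ to an element of $\mathbb{V}'$ shows $\mathscr{A}$ is onto, while uniqueness of the mixed solution, combined with the inf-sup just established to supply a pressure, shows $\mathscr{A}$ is injective; an open mapping argument then upgrades bijectivity of $\mathscr{A}$ to invertibility.

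The only genuinely non-trivial analytic ingredient, and the step I expect to require the most care, is the closed-range theorem, in particular the fact that the inf-sup inequality forces $\mathrm{Range}(\mathscr{B})$ to be closed and not merely dense. I would isolate this as a lemma: for a bounded operator $T$ between Hilbert spaces, $T$ is surjective if and only if there is $c > 0$ with $\|T'y\| \geq c\|y\|$ for all $y$; the forward implication is the open mapping theorem, and for the reverse one shows that $T$ is bounded below on $(\mathrm{Ker}\,T)^{\perp}$, so that in $(\mathrm{Ker}\,T)^{\perp}$ the preimages of a convergent sequence form a Cauchy sequence whose limit is a preimage of the original limit. Everything else is routine: the direct-sum decomposition, the norm estimates, and consistent bookkeeping with the dual-space identifications, in particular the fact that $\Lambda = H^{-1}_{\Gamma_1}$ is itself a dual space, so the Riesz identifications must be applied uniformly.
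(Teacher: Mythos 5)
The paper does not prove this theorem: it is quoted verbatim from Le Tallec's monograph as a black-box tool, so there is no paper argument to compare against. Your proof is the standard one found in, e.g., Brezzi--Fortin or Girault--Raviart: reduce surjectivity of $\mathscr{B}$ to the closed-range theorem applied to the adjoint, decompose $\mathbb{V} = \mathrm{Ker}\,\mathscr{B} \oplus (\mathrm{Ker}\,\mathscr{B})^{\perp}$, eliminate the constrained component $u_1$, solve for $u_0$ on the kernel via $\mathscr{A}$, and then recover $p$ from the annihilation property and the surjectivity of $\mathscr{B}'$ onto $(\mathrm{Ker}\,\mathscr{B})^{\perp} \subset \mathbb{V}'$. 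The converse directions and the norm estimates are all handled correctly.

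The one place where your sketch is thinner than it should be is the lemma you isolate at the end. You assert that the inf-sup bound $\|T'y\|\ge c\|y\|$ implies ``$T$ is bounded below on $(\mathrm{Ker}\,T)^{\perp}$'' and then run a Cauchy-sequence argument for closedness of the range; but that boundedness-below of $T$ on the orthogonal complement of its kernel is precisely the closed-range theorem in disguise, so as written the sketch risks being circular. To make it self-contained in the Hilbert setting you should supply the short trick: since $\mathrm{Range}(T^*)$ is closed, any unit $x\in(\mathrm{Ker}\,T)^{\perp}$ can be written $x=T^*y$ with $\|y\|\le c^{-1}$, and then $1=\|x\|^2=\langle T^*y,x\rangle=\langle y,Tx\rangle\le c^{-1}\|Tx\|$, giving $\|Tx\|\ge c\|x\|$ directly. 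Alternatively, and more cleanly, observe that $TT^*$ is self-adjoint and coercive, $\langle TT^*y,y\rangle=\|T^*y\|^2\ge c^2\|y\|^2$, so $TT^*$ is invertible by Lax--Milgram and $T$ is a fortiori onto; this bypasses the closed-range theorem entirely and is the route I would take. With either repair, your argument is complete.
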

\begin{remark}
 The operator $\mathscr{A}$ is invertible if and only if the following inf-sup condition is satisfied
\begin{equation} \label{infsupa-orig}
  \inf_{w\in \mathrm{Ker}\mathscr{B}} \sup_{v\in \mathrm{Ker}\mathscr{B}} \frac{a(w, v)}{\|w\| \|v\|} \geq \alpha >0
\end{equation}
 For some mixed system, we can prove the so called ``ellipticity'' condition
\begin{equation} \label{ellip}
 \inf_{v\in \mathrm{Ker}\mathscr{B}} \frac{a(v, v)}{\|v\|^2} \geq \alpha >0,
\end{equation}
which is a stronger condition than the inf-sup condition (\ref{infsupa-orig}). 
Thus the ellipticity condition (\ref{ellip}) together with (\ref{infsupb-orig}),
will give us a sufficient condition for 
the well-posedness of the saddle point system (\ref{1foldmixed1-orig})-(\ref{1foldmixed2-orig}).
\end{remark}

\begin{theorem}
  The inf-sup condition for $b(\tilde{q}, \tilde\bv) =b_1(q, \bv)+b_2(\mu, \bm)$ is satisfied if and only if the corresponding inf-sup conditions 
 for $b_1(q, \bv)$ and $b_2(\mu, \bm)$ are satisfied.
\end{theorem}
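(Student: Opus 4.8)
The plan is to exploit the block-diagonal structure of $b$: on the product spaces $\tilde{\mathbb{V}} = \mathbb{V}\times\mathbb{M}$ and $\tilde{\mathbb{P}} = \mathbb{P}\times\Lambda$, equipped with the natural norms $\|(\bv,\bm)\|^2 = \|\bv\|^2 + \|\bm\|^2$ and $\|(q,\mu)\|^2 = \|q\|^2 + \|\mu\|^2$, the form $b((q,\mu),(\bv,\bm)) = b_1(q,\bv) + b_2(\mu,\bm)$ pairs $q$ only with $\bv$ and $\mu$ only with $\bm$. I would prove the two implications separately, extracting the constant $\min(\beta_1,\beta_2)$ in one direction and each $\beta_i$ in the other.

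For the \emph{only if} direction, assume $b$ satisfies the inf-sup condition with constant $\beta$. Given $q\in\mathbb{P}$, apply the condition to the test pressure $\tilde q = (q,0)$, which has $\|\tilde q\| = \|q\|$. Since $b((q,0),(\bv,\bm)) = b_1(q,\bv)$ is independent of $\bm$ while $\|(\bv,\bm)\| \geq \|\bv\|$, the supremum over $\tilde{\mathbb{V}}$ is, in the limit, realized at $\bm = 0$; hence $\sup_{\bv} b_1(q,\bv)/\|\bv\| = \sup_{\tilde\bv} b((q,0),\tilde\bv)/\|\tilde\bv\| \geq \beta\|q\|$, so $b_1$ satisfies its inf-sup condition with $\beta_1 \geq \beta > 0$. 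The same argument with $\tilde q = (0,\mu)$ yields the inf-sup condition for $b_2$ with $\beta_2 \geq \beta > 0$.

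For the \emph{if} direction, assume $b_1$ and $b_2$ satisfy inf-sup conditions with constants $\beta_1,\beta_2>0$. Fix $\tilde q = (q,\mu)$ and $\varepsilon \in (0,\min(\beta_1,\beta_2))$. By the inf-sup condition for $b_1$ (after rescaling the maximizing direction), there is $\bv\in\mathbb{V}$ with $\|\bv\| = \|q\|$ and $b_1(q,\bv) \geq (\beta_1-\varepsilon)\|q\|^2$; likewise there is $\bm\in\mathbb{M}$ with $\|\bm\| = \|\mu\|$ and $b_2(\mu,\bm) \geq (\beta_2-\varepsilon)\|\mu\|^2$. Then $\tilde\bv = (\bv,\bm)$ satisfies $\|\tilde\bv\| = \|\tilde q\|$ and $b(\tilde q,\tilde\bv) = b_1(q,\bv) + b_2(\mu,\bm) \geq (\beta_1-\varepsilon)\|q\|^2 + (\beta_2-\varepsilon)\|\mu\|^2 \geq (\min(\beta_1,\beta_2)-\varepsilon)\|\tilde q\|^2$. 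Therefore $\sup_{\tilde\bv} b(\tilde q,\tilde\bv)/\|\tilde\bv\| \geq (\min(\beta_1,\beta_2)-\varepsilon)\|\tilde q\|$, and letting $\varepsilon\to 0$ gives the inf-sup condition for $b$ with $\beta = \min(\beta_1,\beta_2)$.

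I do not expect a genuine obstacle here: the statement is a soft consequence of the decoupled structure. The only points needing care are (i) fixing the product-norm convention so that "optimize each block separately" is literally valid — with a different equivalent product norm only the constants would change, not the statement; and (ii) the routine $\varepsilon$-bookkeeping forced by the fact that the inf-sup supremum need not be attained. One should also remark that $\Lambda = H^{-1}_{\Gamma_1}$ is a Hilbert space, so that the LBB framework of Theorem \ref{Thm-LBB} applies verbatim to the product system $(\tilde{\mathbb{V}},\tilde{\mathbb{P}})$.
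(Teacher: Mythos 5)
Your proof is correct, but it takes a genuinely different route from the paper's. The paper does not manipulate the inf-sup quotients at all: it invokes Theorem~\ref{Thm-LBB} to translate each inf-sup condition into surjectivity of the corresponding operator, and then observes that since $\mathscr{B}(\bv,\bm)=(\mathscr{B}_1\bv,\mathscr{B}_2\bm)$ is block-diagonal, $\mathscr{B}$ is onto $\mathbb{P}'\times\Lambda'$ if and only if $\mathscr{B}_1$ and $\mathscr{B}_2$ are each onto; a final appeal to the same theorem converts surjectivity back into the inf-sup statements. Your argument instead stays at the level of the variational quotients, choosing the test functions $(q,0)$ and $(0,\mu)$ for one direction and an $\varepsilon$-near-maximizer for each block in the other. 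The trade-off is what you would expect: the paper's operator-theoretic route is shorter and cleaner once the LBB equivalence is in hand, but it yields no information about the constant; your computation is slightly longer but produces the explicit quantitative relation $\beta = \min(\beta_1,\beta_2)$ and, in the converse direction, $\beta_i \geq \beta$, which is worth having if one cares about how the discrete inf-sup constants degenerate. One small presentational point: in the ``if'' direction you should note that if $q=0$ (resp.\ $\mu=0$) the choice $\bv=0$ (resp.\ $\bm=0$) still respects $\|\bv\|=\|q\|$, so the rescaling step is vacuous rather than ill-defined; and the case $\tilde q=0$ is trivial.
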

\begin{proof}
 First if the inf-sup condition for $b(\tilde{q}, \tilde\bv)$ is satisfied, 
then by Theorem \ref{Thm-LBB}, we know that the operator
\begin{equation*}
 \begin{split}
  \mathscr{B}: & \mathbb{V}\times\mathbb{M} \rightarrow \mathbb{P}'\times\Lambda' \\
     & (\bv, \bm)\mapsto \mathscr{B}(\bv, \bm)
     \text{ such that } \langle \mathscr{B}(\bv, \bm),(q,\mu) \rangle
    = b_1(q,\bv)+b_2(\mu, \bm)
     \qquad \forall (q,\mu)\in \mathbb{P}\times\Lambda
 \end{split}
\end{equation*}
is onto. Therefore, the operator
\begin{equation*}
 \begin{split}
  \mathscr{B}_1: & {\mathbb{V}} \rightarrow {\mathbb{P}}' \\
     & \bv\mapsto \mathscr{B}_1 \bv
     \text{ such that } \langle \mathscr{B}_1 \bv,{q}\rangle = b_1({q}, \bv)
     \qquad \forall {q}\in {\mathbb{P}}
 \end{split}
\end{equation*}
and the operator
\begin{equation*}
 \begin{split}
  \mathscr{B}_2: & {\mathbb{M}} \rightarrow {\Lambda}' \\
     & \bm\mapsto \mathscr{B}_2 \bm
     \text{ such that } \langle \mathscr{B}_2 \bm,{\mu}\rangle = b_2({\mu}, \bm)
     \qquad \forall {\mu}\in {\Lambda}
 \end{split}
\end{equation*}
are both onto. Thus by Theorem \ref{Thm-LBB}, the inf-sup conditions for 
$b_1(q, \bv)$ and $b_2(\mu, \bm)$ are satisfied.

Conversely, if the inf-sup conditions $b_1(q, \bv)$ and $b_2(\mu, \bm)$ are satisfied,
then the operators $\mathscr{B}_1$ and $\mathscr{B}_2$ are both onto.
Thus the operator $\mathscr{B}$ is also onto. 
Therefore we have the inf-sup condition for $b(\tilde{q}, \tilde\bv) =b_1(q, \bv)+b_2(\mu, \bm)$.
\end{proof}

Therefore, to verify the inf-sup condition for $b(\tilde{q}, \tilde\bv) =b_1(q, \bv)+b_2(\mu, \bm)$,
it's enough to verify the inf-sup conditions for $b_1(q, \bv)$ and $b_2(\mu, \bm)$ individually.
Notice that the bilinear form $b_1(q, \bv)$ in (\ref{b1}) is exactly the one in the incompressible elasticity
\cite{LeTallec},
while $b_2(\mu, \bm)$ in (\ref{b2}) is exactly the one in the harmonic map problem \cite{hu2009saddle}.

For the inf-sup condition for $b_1(q, \bv)$, it's well-known that it's satisfied at the strain-free state,
where $F=I$, and the inf-sup condition is reduced to the one in the Stokes problem:
 \begin{equation} \label{infsupb1-init}
  \inf_{q \in L^2(\Omega)} \sup_{\bv \in {\mathbf H}^1_{0|\Gamma_0}(\Omega)}
  \frac{\langle q, \mathrm{div}(\bv) \rangle}{\|q\|_0 \|\bv\|_1} \geq \beta_1 > 0
 \end{equation}
Since the stress-free state has constant $F$ matrix, by change of variables, it's easy to verify that the inf-sup condition
for $b_1(q, \bv)$ is satisfied at the \textit{stress-free} state, as well.
In the general case that $\bu \neq 0$ and $F$ is not a constant, 
analytical verification of the inf-sup condition for $b_1(q,\bv)$ can be very difficult.

As for the inf-sup condition for $b_2(\mu, \bm)$, a slight modification of the proof in \cite{hu2009saddle} gives us
the following theorem:
\begin{theorem}
 Assume $\bn\in {\mathbf H}^1_{\bn_0|\Gamma_1}(\Omega) \bigcap W^{1,\infty}(\Omega)$,
 then the second inf-sup condition for $b_2(\mu, \bm)$ is satisfied, that is
 \begin{equation} \label{infsupb2-con}
   \inf_{\mu \in H^{-1}_{\Gamma_1}(\Omega)} \sup_{\bm\in {\mathbf H}^1_{{\mathbf 0}|\Gamma_1}(\Omega)}
   \frac{\langle 2\bn\cdot\bm, \mu \rangle}{\|\bm\|_1 \|\mu\|_{-1}}
   \geq \beta_2 > 0.
 \end{equation}
\end{theorem}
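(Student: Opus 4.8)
The plan is to verify the inf-sup condition by an explicit construction: for each $\mu \in H^{-1}_{\Gamma_1}(\Omega)$ I will exhibit a single test field $\bm \in \mathbf{H}^1_{0|\Gamma_1}(\Omega)$ for which the quotient $\langle 2\bn\cdot\bm,\mu\rangle/(\|\bm\|_1\,\|\mu\|_{-1})$ is bounded below by a constant depending only on $\|\bn\|_{W^{1,\infty}}$ (and, through the relevant norm equivalences, on $\Omega$ and $\Gamma_1$). Since such a $\bm$ is a legitimate competitor in the supremum, this bounds $\sup_{\bm}$ from below for every $\mu$, and the infimum over $\mu\neq 0$ follows immediately. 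This is essentially the argument of \cite{hu2009saddle}, the only change being that $\bn$ is now a non-constant $W^{1,\infty}$ field rather than a fixed vector.

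First I would use that $H^1_{0|\Gamma_1}(\Omega)$ is a closed subspace of $H^1(\Omega)$, hence a Hilbert space on which $\|\cdot\|_1$ is a norm (Poincar\'e). By the Riesz representation theorem there is a unique scalar $\phi \in H^1_{0|\Gamma_1}(\Omega)$ with $(\phi,\psi)_{H^1}=\langle\mu,\psi\rangle$ for all $\psi\in H^1_{0|\Gamma_1}$, and $\|\phi\|_1 = \|\mu\|_{-1}$ (the Riesz map is an isometry onto the dual equipped with the dual norm, which is precisely the $\|\cdot\|_{-1}$ used here). I then set $\bm := \phi\,\bn$. Because $\bn\in W^{1,\infty}(\Omega)$, the product rule gives $\nabla\bm = \bn\otimes\nabla\phi + \phi\,\nabla\bn$ in $L^2$, so $\bm\in H^1(\Omega;\mathbb{R}^2)$ with
\[
 \|\bm\|_1 \le \bigl(1+\|\bn\|_{W^{1,\infty}}\bigr)\,\|\phi\|_1 =: C_{\bn}\,\|\phi\|_1 .
\]
Moreover, approximating $\phi$ in $H^1$ by smooth functions vanishing near $\Gamma_1$ and multiplying by $\bn$ shows that $\bm\in\mathbf{H}^1_{0|\Gamma_1}(\Omega)$.

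The key algebraic step is the unit-length constraint $|\bn|=1$ a.e., which gives $\bn\cdot\bm = \phi\,|\bn|^2 = \phi \in H^1_{0|\Gamma_1}(\Omega)$, so the pairing with $\mu$ is legitimate and
\[
 \langle 2\bn\cdot\bm,\mu\rangle = 2\langle\phi,\mu\rangle = 2(\phi,\phi)_{H^1} = 2\|\phi\|_1^2 = 2\|\mu\|_{-1}^2 .
\]
Combining with $\|\bm\|_1\le C_{\bn}\|\mu\|_{-1}$ yields
\[
 \sup_{\bm\in\mathbf{H}^1_{0|\Gamma_1}}\frac{\langle 2\bn\cdot\bm,\mu\rangle}{\|\bm\|_1\,\|\mu\|_{-1}}
 \;\ge\; \frac{2\|\mu\|_{-1}^2}{C_{\bn}\,\|\mu\|_{-1}^2} \;=\; \frac{2}{C_{\bn}} \;=:\; \beta_2 \;>\;0 ,
\]
and taking the infimum over $\mu\neq 0$ completes the proof.

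The one genuinely essential hypothesis, and the only real obstacle, is the $W^{1,\infty}$ regularity of $\bn$: it is needed to guarantee $\phi\,\nabla\bn\in L^2$ (so that $\bm\in H^1$ at all) and, more importantly, to obtain the bound $\|\bm\|_1\le C_{\bn}\|\phi\|_1$ with $C_{\bn}$ independent of $\phi$, equivalently of $\mu$. Everything else is routine bookkeeping: the Riesz isometry, the fact that multiplication by a $W^{1,\infty}$ function preserves the homogeneous trace on $\Gamma_1$, and the identification of the duality pairing with the one defining $\|\cdot\|_{-1}$. It is worth noting in a remark that $\beta_2 = 2/\bigl(1+\|\bn\|_{W^{1,\infty}}\bigr)$ degrades as the director field develops large gradients, consistent with the intuition that the constraint becomes harder to enforce there.
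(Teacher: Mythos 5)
Your proof is correct and follows the same explicit-construction strategy (Riesz representer $\phi$ of $\mu$, test field $\bm=\phi\,\bn$, unit constraint $|\bn|=1$ to collapse $\bn\cdot\bm$ to $\phi$, and $W^{1,\infty}$ regularity of $\bn$ to control $\|\bm\|_1$) that the paper implicitly invokes by citing \cite{hu2009saddle} with a ``slight modification.'' The only thing worth flagging is that the theorem's hypotheses as printed never say $|\bn|=1$ a.e.\ — your argument correctly uses it, and it is implicit from context since $\bn$ is the director field, but a careful statement would include it.
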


Finally, for the ellipticity condition for the bilinear form $a(\tilde\bw, \tilde\bv)$,
it's generally very complicated to verify due to the complexity of the expressions of $a_1(\cdot,\cdot)$, 
$a_2(\cdot,\cdot)$ and $a_3(\cdot,\cdot)$. However, it can be verified that at the stress-free state,
the ellipticity condition for $a(\tilde\bw, \tilde\bv)$ is \textit{not} satisfied. 
This doesn't necessarily mean that the linearized system (\ref{mixed1})-(\ref{mixed4}) is not well-posed, though.
After all, as remarked before, the ellipticity condition is a sufficient condition, not a necessary condition.

\section{Discretization}\label{sec:discretization}
In the elastomer problem, we have the following variables to solve:
\begin{itemize}
 \item The displacement vector field $\bu$ and the pressure $p$,
 which is also the Lagrange multiplier for the incompressibility constraint
 $\det (F)-1=0$,
 \item The director vector field $\bn$ and the Lagrange multiplier $\lambda$ 
 for the unity constraint $|\bn|^2-1=0$.
\end{itemize}
The first pair $(\bu, p)$ is similar to those in the incompressible elasticity,
and we'll use the Taylor-Hood element $P_2-P_1$ for $(\bu, p)$.
That is continuous piecewise quadratic finite element for $\bu$,
and continuous piecewise linear finite element for $p$.
The Taylor-Hoold element has been proved (\cite{LeTallec}) 
to be stable at least at the strain-free state $\bu=0$.
The second pair $(\bn, \lambda)$ is similar to those in the harmonic map
problem, and we will use piecewise linear finite element for both
$\bn$ and $\lambda$, as Winther et al. did in \cite{hu2009saddle}.
Notice that it's crucial to impose Dirichlet boundary conditions for $\bn$
and $\lambda$ at the same boundary to make sure that $|\bn|=1$
is satisfied at all the mesh nodes.

Let $V_h$ denote the space of continuous piecewise linear functions
and $V_{h,g|\Gamma_0}=\{v \in V_h \cap H^1: v=g \text{ on } \Gamma_0\}$.
Let $\bV_h$ and $\bV_{h,\bg|\Gamma_0}$ be the corresponding vector version.
Let $\pi_h$ be the nodal interpolation operators onto the spaces $V_h$ and $\bV_h$.

Let $W_h$ denote the space of continuous piecewise quadratic functions
and $W_{h,g|\Gamma_0}=\{w \in W_h \cap H^1: w=g \text{ on } \Gamma_0\}$.
Let ${\mathbf W}_h$ and ${\mathbf W}_{h,\bg|\Gamma_0}$ be the corresponding vector version.

\subsection{Existence of the discrete minimization problem} 
Let 
\begin{equation}
 K_h = \{\bu_h \in \bW_{h,0|\Gamma_0}+\bu_{0h}, \int_{\Omega} q_h (\det (I+\nabla \bu_h)-1)dx=0,\forall q_h\in V_h\}
\end{equation}
\begin{equation}
 N_h = \{\bn_h \in \mathbf{V}_{h,0|\Gamma_1}+\bn_{0h}, 
\int_{\Omega} \mu_h \pi_h(|\bn_h|^2-1)dx=0,\forall \mu_h\in V_{h,0|\Gamma_1}\}
\end{equation}
Define the admissible set 
\begin{equation}
 \mathcal{A}(\bu_{0h}, \bn_{0h}) = K_h\times N_h
\end{equation}
Notice that $\bn_h \in N_h$ if and only if the function $\pi_h(|\bn_h|^2-1) \in V_{h,0|\Gamma_1}$  is identically 0,
which means $|\bn_h|=1$ at all the mesh nodes.

Let $\varphi_j, j=1,\cdots, N$ be a basis of $V_h$, and $\psi_j, j=1,\cdots,M$ be a basis of $V_{h,0|\Gamma_1}$.
And define
\begin{equation} \label{eqn:defn-gj}
 g_j (\bu_h, \bn_h) =
 \begin{cases} 
  \int_{\Omega} \varphi_j (\det (I+\nabla \bu_h)-1) dx&           1\leq j \leq N\\ 
  \int_{\Omega} \psi_{j-N} \pi_h(|\bn_h|^2-1)dx &             N+1 \leq j \leq N+M
 \end{cases}
\end{equation}
Then $g_j$ is a continuous function on 
$\left(\bW_{h,0|\Gamma_0}+\bu_{0h}\right) \times \left(\mathbf{V}_{h,0|\Gamma_1}+\bn_{0h}\right)$.
Therefore $\mathcal{A}(\bu_{0h}, \bn_{0h})$ can be written as 
the intersection of reciprocal images of $0$ of the continuous functions $g_j$.
Thus it's closed in $\left(\bW_{h,0|\Gamma_0}+\bu_{0h}\right) \times \left(\mathbf{V}_{h,0|\Gamma_1}+\bn_{0h}\right)$.

Define the energy functional
\begin{align}
 \Pi(\bu,\bn) ={}& \int_{\Omega}\left(|F|^2-(1-a)|F^T \bn|^2\right)+b|\nabla \bn|^2 \nonumber \\
              &-\int_{\Omega} \f\cdot \bu-\int_{\Gamma_2} \bg\cdot \bu da
\end{align}
where $F=I+\nabla \bu$.
Then the discrete formulation of the minimization problem is
\begin{align} \label{problem:disc-minBTWOF}
 \text{Find } (\bu_h,\bn_h) \in \mathcal{A}(\bu_{0h}, \bn_{0h}), \text{ minimizing } \Pi \text{ on } \mathcal{A}(\bu_{0h}, \bn_{0h}).
\end{align}

\begin{lemma} \label{lemma:disc-BTWcoer}
 Assume $\bn \in N_h$ and $0\leq a \leq1$, then for any matrix $F\in \mathbb{M}^{2\times 2}$, we have 
 \begin{equation}
  (|F|^2-(1-a)|F^T \bn|^2) \geq a|F|^2
 \end{equation}
\end{lemma}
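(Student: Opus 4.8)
The plan is to reproduce the argument of Lemma~\ref{lemma:coerBTW}; the only new point is that for $\bn\in N_h$ one no longer has $|\bn|=1$ everywhere, but only the weaker pointwise bound $|\bn|\le 1$, which still suffices. To get that bound, recall that, as observed immediately after the definition of $N_h$, membership $\bn\in N_h$ forces $|\bn|=1$ at every mesh node. Since $\bn$ is a continuous piecewise-linear vector field, it is affine on each triangle $T$ of the mesh, so for $x\in T$ we may write $\bn(x)=\sum_i\lambda_i(x)\,\bn(v_i)$, where the $v_i$ are the vertices of $T$ and the $\lambda_i(x)\ge 0$, $\sum_i\lambda_i(x)=1$ are the barycentric coordinates of $x$. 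By the triangle inequality and $|\bn(v_i)|=1$,
\begin{equation*}
 |\bn(x)|\le \sum_i\lambda_i(x)\,|\bn(v_i)| = \sum_i\lambda_i(x)=1,
\end{equation*}
so $|\bn|\le 1$ a.e.\ in $\Omega$. (This is precisely where it matters that piecewise-linear, rather than higher-order, elements are used for the director.)

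With the pointwise bound in hand, fix any $F\in\mathbb{M}^{2\times 2}$. The matrix $FF^T$ is symmetric positive semidefinite; let $\lambda_{\max}$ be its larger eigenvalue. Then, using $|\bn|\le 1$,
\begin{equation*}
 |F^T\bn|^2 = \bn^T FF^T\bn \le \lambda_{\max}\,|\bn|^2 \le \lambda_{\max} \le \tr(FF^T) = |F|^2,
\end{equation*}
the penultimate inequality holding because the other eigenvalue of $FF^T$ is nonnegative. Since $0\le a\le 1$, i.e.\ $1-a\ge 0$, this yields
\begin{equation*}
 |F|^2-(1-a)|F^T\bn|^2 \ \ge\ |F|^2-(1-a)|F|^2 \ =\ a|F|^2,
\end{equation*}
which is the assertion (an equality when $a=1$, so that case needs nothing).

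I do not expect any real obstacle here; the one thing to be careful about is what $\bn\in N_h$ actually provides. The constraint defining $N_h$ only pins down $|\bn|$ at the nodes, so one must pass to the sub-unit pointwise estimate $|\bn|\le 1$ through convexity of the Euclidean norm on each simplex, rather than invoking Proposition~\ref{prop:BTW2Dzero} as was done in the continuous Lemma~\ref{lemma:coerBTW}. Notably the coercivity constant obtained is still $a$, which is all that the discrete existence argument for Problem~(\ref{problem:disc-minBTWOF}) will require.
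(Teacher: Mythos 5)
Your proof is correct, and the first half — showing $|\bn|\le 1$ pointwise by expanding $\bn$ in barycentric coordinates on each triangle and applying the triangle inequality at the nodes — is exactly what the paper does. You then diverge in the finishing step: the paper separates out the case $|\bn(x)|=0$, normalizes $\hat{\bn}=\bn(x)/|\bn(x)|$ in the case $|\bn(x)|>0$, bounds $(1-a)|F^T\bn|^2=(1-a)|\bn|^2|F^T\hat{\bn}|^2\le(1-a)|F^T\hat{\bn}|^2$, and then invokes Lemma~\ref{lemma:coerBTW} (which in turn rests on the spectral decomposition in Proposition~\ref{prop:BTW2Dzero}). You instead bound $|F^T\bn|^2=\bn^TFF^T\bn\le\lambda_{\max}(FF^T)|\bn|^2\le\tr(FF^T)=|F|^2$ directly and conclude $|F|^2-(1-a)|F^T\bn|^2\ge a|F|^2$ in one line. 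Your route is slightly more elementary and self-contained: it avoids the $|\bn|=0$ case split and the detour through the continuous lemma, at the small cost of not exhibiting the sharper intermediate bound $\lambda_1^2+a\lambda_2^2$ that the paper's chain of lemmas records. Either way, the coercivity constant $a$ is the same, which is all that Theorem~\ref{thm:disc-exist-BTWOF} requires.
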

\begin{proof}
 Take any point $x \in \Omega$, suppose it's in the triangle $\bigtriangleup P_1P_2P_3$. 
 Since $\bn \in N_h$, we have
 \begin{align*}
  \bn(x) ={}& \lambda_1 \bn(P_1) + \lambda_2 \bn(P_2) + \lambda_3 \bn(P_3)
 \end{align*}
 where $\lambda_i, i=1,2,3$ are barycentric coordinates. 
 As pointed out above, $\bn \in N_h$ if and only if $|\bn|=1$ at all the mesh nodes.
 Thus we have
 \begin{align*}
  |\bn(x)| 
  ={}& |\lambda_1 \bn(P_1) + \lambda_2 \bn(P_2) + \lambda_3 \bn(P_3)| \\
  \leq{}& \lambda_1 |\bn(P_1)| + \lambda_2 |\bn(P_2)| + \lambda_3 |\bn(P_3)| \\
  ={}& \lambda_1 + \lambda_2 + \lambda_3 \\
  ={}& 1
 \end{align*}
 If $|\bn(x)|=0$, then obviously the conclusion is true. 
 In the following, we assume $|\bn(x)|>0$.
 
 Let $\hat{\bn}=\bn(x)/|\bn(x)|$, then $|\hat{\bn}|=1$.
 We have
 \begin{align*}
   &(|F|^2-(1-a)|F^T \bn|^2) \\
   ={}& (|F|^2-(1-a)|\bn(x)|^2 |F^T \hat{\bn}|^2) \\
   \geq{}& (|F|^2-(1-a)|F^T \hat{\bn}|^2) \\
   \geq{}& a |F|^2
 \end{align*}
 where in the last step we have used Proposition \ref{lemma:coerBTW}.
\end{proof}
\begin{remark}
 Similar arguments work for 3D, as well.
\end{remark}

 \begin{theorem} \label{thm:disc-exist-BTWOF}
  There exists solution to the discrete minimization problem (\ref{problem:disc-minBTWOF}).
 \end{theorem}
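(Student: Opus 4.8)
The plan is to reduce the claim to a Weierstrass-type argument in a finite-dimensional space, mirroring the structure of the proof of Theorem \ref{thm:cont-exist-BTWOF} but without any of its weak-compactness subtleties. First I would record what has already been established above: the admissible set $\mathcal{A}(\bu_{0h},\bn_{0h}) = K_h \times N_h$ is a closed subset of the finite-dimensional affine space $(\bW_{h,0|\Gamma_0}+\bu_{0h}) \times (\mathbf{V}_{h,0|\Gamma_1}+\bn_{0h})$, being the common zero set of the continuous functions $g_j$ of (\ref{eqn:defn-gj}). I would also note that, with the discrete Dirichlet data chosen compatibly (so that $\bu_{0h} \in K_h$ and $\bn_{0h}$ has unit length at every node on $\Gamma_1$), the set $\mathcal{A}(\bu_{0h},\bn_{0h})$ is non-empty and contains $(\bu_{0h},\bn_{0h})$; hence $m := \inf_{\mathcal{A}(\bu_{0h},\bn_{0h})} \Pi$ satisfies $m < +\infty$.

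Next I would establish coercivity of $\Pi$ on $\mathcal{A}(\bu_{0h},\bn_{0h})$. For $(\bu_h,\bn_h)$ in this set we have $\bn_h \in N_h$, so Lemma \ref{lemma:disc-BTWcoer} gives the pointwise estimate $|F|^2 - (1-a)|F^T\bn_h|^2 \geq a|F|^2$ with $F = I + \nabla\bu_h$, whence
\[
 \Pi(\bu_h,\bn_h) \geq \int_{\Omega} a|I+\nabla\bu_h|^2 + b|\nabla\bn_h|^2 - \int_{\Omega}\f\cdot\bu_h - \int_{\Gamma_2}\bg\cdot\bu_h\,da .
\]
Applying Young's inequality to the two load terms, together with the generalized Poincar\'e inequality (using $\bu_h-\bu_{0h}=0$ on $\Gamma_0$) and the trace theorem exactly as in the proof of Theorem \ref{thm:cont-exist-BTWOF}, and using $a>0$, I get constants $C_1, C_2 > 0$ and $C_3$ with $\Pi(\bu_h,\bn_h) \geq C_1\|\bu_h\|_{H^1(\Omega)}^2 + C_2\|\nabla\bn_h\|_{L^2(\Omega)}^2 - C_3$. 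Moreover, as shown in the proof of Lemma \ref{lemma:disc-BTWcoer}, membership in $N_h$ forces $|\bn_h(x)| \leq 1$ on all of $\Omega$, so $\|\bn_h\|_{L^2(\Omega)}^2 \leq |\Omega|$ and therefore $\|\bn_h\|_{H^1(\Omega)}$ is controlled once $\|\nabla\bn_h\|_{L^2(\Omega)}$ is. Consequently the sublevel set $\{(\bu_h,\bn_h) \in \mathcal{A}(\bu_{0h},\bn_{0h}) : \Pi \leq m+1\}$ is bounded in $H^1 \times H^1$.

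The direct method then finishes the argument. Take a minimizing sequence $(\bu_h^k,\bn_h^k) \in \mathcal{A}(\bu_{0h},\bn_{0h})$; for $k$ large it lies in the bounded sublevel set above. Since the ambient space is finite-dimensional, a subsequence (not relabeled) converges strongly to some $(\bu_h^\ast,\bn_h^\ast)$, which lies in $\mathcal{A}(\bu_{0h},\bn_{0h})$ by closedness. The functional $\Pi$ is continuous on this finite-dimensional space—it is polynomial in the nodal coefficients of $\bu_h$ and $\bn_h$ plus the two linear load terms—so $\Pi(\bu_h^\ast,\bn_h^\ast) = \lim_k \Pi(\bu_h^k,\bn_h^k) = m$, and $(\bu_h^\ast,\bn_h^\ast)$ is the desired minimizer.

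Regarding the main obstacle: there is essentially no analytic difficulty here—crucially, and in contrast with the continuous case, there is no weak-lower-semicontinuity issue and no appeal to the determinant-convergence result Theorem \ref{thm:detconv}, because in finite dimensions $\Pi$ is genuinely continuous and closed-plus-bounded is compact. The only points that need care in the write-up are (i) verifying that $\mathcal{A}(\bu_{0h},\bn_{0h})$ is non-empty, which is a mild constraint on the choice of discrete Dirichlet data, and (ii) checking that the pointwise bound of Lemma \ref{lemma:disc-BTWcoer} (valid precisely because $\bn_h$ is piecewise linear with unit-length nodal values) really applies on the whole domain, including elements abutting the boundary. I expect (i) to be the step requiring the most attention.
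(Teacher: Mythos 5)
Your argument is correct and follows essentially the same route as the paper's proof: coercivity via Lemma \ref{lemma:disc-BTWcoer} together with Young's inequality, the generalized Poincar\'e inequality and the trace theorem, closedness of $\mathcal{A}(\bu_{0h},\bn_{0h})$ from the zero-set description via the $g_j$, and then Weierstrass in finite dimensions. Your two extra remarks (non-emptiness of the admissible set, and the pointwise bound $|\bn_h|\le 1$ giving control of $\|\bn_h\|_{L^2}$, which the paper's displayed estimate does not by itself supply) are small but genuine refinements that tighten the written argument.
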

 \begin{proof}
   Take any $(\bu_h, \bn_h)\in \mathcal{A}(\bu_{0h}, \bn_{0h})$, we have by Lemma \ref{lemma:disc-BTWcoer}
   \begin{align*}
    \Pi(\bu_h, \bn_h) 
    \geq {}& \int_{\Omega} a|(I+\nabla \bu_h)|^2 + b|\nabla \bn_h|^2 dx \\
     &-\|\f\|_{L^2(\Omega)} \|\bu_h\|_{L^2(\Omega)}-\|\bg\|_{L^2(\Gamma_2)} \|\bu_h\|_{L^2(\Gamma_2)} \\
    \geq {}&  \int_{\Omega} a|(I+\nabla \bu_h)|^2 + b|\nabla \bn_h|^2 dx \\
     & -\left(\frac{1}{\varepsilon}\|\f\|_{L^2(\Omega)}^2 + \varepsilon\|\bu_h\|_{L^2(\Omega)}^2 \right)
     -\left(\frac{1}{\varepsilon}\|\bg\|_{L^2(\Gamma_2)}^2 + \varepsilon\|\bu_h\|_{L^2(\Gamma_2)}^2 \right) \\
   \geq {}& \int_{\Omega} C_1 |(I+\nabla \bu_h)|^2 + C_2 |\nabla \bn_h|^2 dx - C_3 
   \end{align*}
   where $\varepsilon >0$ is small and $C_i>0, i=1,2,3$ are constants and 
  we have applied the generalized Poincar$\mathrm{\acute{e}}$ inequality (\cite{CI87}, p281) 
  and the Trace Theorem (\cite{evans1998c}, p258) in the last step.
   Thus $\Pi(\bu_h, \bn_h) \rightarrow \infty$ as $\|\bu_h\|_1$ or $\|\bn_h\|_1$ goes to $\infty$.
   Therefore its minimum must be achieved at a bounded subset of $\mathcal{A}(\bu_{0h}, \bn_{0h})$.
   
   On the other hand, 
   since $\mathcal{A}$ is the intersection of 
   reciprocal images of $0$ of the continuous functions $g_j$, it's a closed set.

   Now we are minimizing a continuous function $\Pi(\bu_h, \bn_h)$ 
   on a closed  bounded \textit{finite-dimensional} set, by the Weierstrass Theorem,
   we can find $(\bu_h, \bn_h) \in \mathcal{A}(\bu_{0h}, \bn_{0h})$ minimizing $\Pi$ on $\mathcal{A}(\bu_{0h}, \bn_{0h})$.
 \end{proof}
\begin{remark}
 This theorem is the discrete version of Theorem \ref{thm:cont-exist-BTWOF}. 
 Since we are dealing with finite dimensional spaces, the proof is much simpler. 
 Plus, in the continuous case we were only able to prove Theorem \ref{thm:cont-exist-BTWOF}
 for 2D, but in the discrete case, it's easy to see that the proof above applies to 3D, too.
\end{remark}

\subsection{Equilibrium equation and its linearization}
Similar to the continuous case, we start with the following energy functional with 
Lagrange multipliers
\begin{align} \label{eqn:disc-constr-energy}
 \mathcal{E}(\bu_h,\bn_h, p_h, \lambda_h) ={}& \int_{\Omega}(|F_h|^2-(1-a)|F_h^T \bn_h|^2)
                        +b|\nabla \bn_h|^2 \nonumber \\
             &-p_h(\det (F_h)-1)+\lambda(\pi_h(\bn_h, \bn_h)-1)  \\
              &-\int_{\Omega} \f\cdot \bu_h -\int_{\Gamma_2} \bg\cdot \bu_h da \nonumber
\end{align}
where $F_h = I+\nabla \bu_h$ is the deformation gradient.

Taking the first variation variation on (\ref{eqn:disc-constr-energy}) gives
us the Euler-Lagrange equations (equilibrium equations) for the constrained minimization problem (\ref{problem:disc-minBTWOF}),
and taking another variation on the Euler-Lagrange equations
gives us the linearized problems. 

Thus the equilibrium equations are, find  $(\bu_h, \bn_h, p_h, \lambda_h) \in
{\mathbf W}_{h,\bu_0|\Gamma_0} \times \bV_{h,\bn_0|\Gamma_1}
\times V_h \times V_{h,\lambda_0|\Gamma_1}$,  such that 
\begin{align}
 0 ={}&   \int_{\Omega} 2(F_h:\nabla \bv-(1-a)(F_h^T\bn_h, \nabla \bv^T \bn_h)) 
        -p_h\frac{\partial \det }{\partial F_h}:\nabla \bv \nonumber \\
       & -\int_{\Omega} \f\cdot \bv-\int_{\Gamma_2} \bg\cdot \bv da
         \label{d2EL1} \\
0 ={}& \int_{\Omega} -2(1-a)(F_h^T \bn_h, F_h^T \bm)
       +2b(\nabla \bm,\nabla \bn_h)+2\lambda_h \pi_h(\bn_h,\bm)\label{d2EL2} \\
0 ={}& \int_{\Omega}-q(\det F_h-1) \label{d2EL3}\\
0 ={}& \int_{\Omega}\mu \pi_h((\bn_h,\bn_h)-1) \label{d2EL4}
\end{align}
for any test functions $(\bv, \bm, q, \mu) \in
{\mathbf W}_{h,0|\Gamma_0} \times \bV_{h,0|\Gamma_1}
\times V_h \times V_{h,0|\Gamma_1}$, where $F_h = I+\nabla\bu_h$.

The corresponding linearized problem is, for a given $(\bu_h, \bn_h, p_h, \lambda_h) \in
{\mathbf W}_{h,\bu_0|\Gamma_0} \times \bV_{h,\bn_0|\Gamma_1}
\times V_h \times V_{h,\lambda_0|\Gamma_1}$,
find $({\bw},{\bl},o,\gamma) \in {\mathbf W}_{h,0|\Gamma_0} \times \bV_{h,0|\Gamma_1}
\times V_h \times V_{h,0|\Gamma_1}$ such that
\begin{align}
 a_1({\bw}, \bv)+a_2({\bl}, \bv)+b_1(o, \bv) ={}& L_1(\bv)
       \label{dmixed1}\\
 a_2(\bm, {\bw})+a_3({\bl}, \bm)+b_2(\gamma, \bm) ={}& L_2(\bm)
       \label{dmixed2} \\
 b_1(q, {\bw})  ={}& L_3(q) \label{dmixed3}\\
 b_2(\mu, {\bl}) ={}& L_4(\mu) \label{dmixed4}
\end{align}
is true for any
$(\bv,\bm,q,\mu) \in {\mathbf W}_{h,0|\Gamma_0} \times \bV_{h,0|\Gamma_1}
\times V_h \times V_{h,0|\Gamma_1}$. Here $a_1$, $a_2$ and $b_1$ are the same as in the continuous case, while
\begin{align}
 a_3({\bl},\bm)=&\int_{\Omega} -2(1-a)(F_h^T {\bl}, F_h^T \bm)
              +2b(\nabla \bm, \nabla {\bl})
                         +2\lambda_h \pi_h({\bl}, \bm)
\end{align}
and
\begin{equation}
 b_2(\mu, \bm) = \int_{\Omega} 2\mu \pi_h(\bn_h, \bm)
\end{equation}

\subsection{Well-posedness of the linearized system}
As in the continuous case, the well-posedness of the linearized system 
can be reduced to the verification of the inf-sup conditions for 
the bilinear forms $b_1(\cdot, \cdot)$, $b_2(\cdot, \cdot)$ and $a(\cdot, \cdot)$.

Since we used the Taylor-Hood element $\boldsymbol{P}_2-P_1$ for the $(\bu,p)$ combination,
the inf-sup condition for $b_1(\cdot, \cdot)$ is at least satisfied
at the strain-free state (Proposition 6.1 of \cite{BrezziFortin1991}).
Since at the stress-free state the deformation gradient $F$ is a constant,
by change of variables, it's easy to verify that the inf-sup condition
for $b_1(\cdot, \cdot)$ is satisfied at the stress-free state, as well.

For the inf-sup condition for $b_2(\cdot, \cdot)$, we can use the results 
from \cite{hu2009saddle}. A slight modification of the proof in \cite{hu2009saddle}
gives us the following result:
\begin{theorem}
  Assume $\bn\in {\mathbf H}^1_{\bn_0|\Gamma_1}(\Omega) \bigcap \boldsymbol{W}^{1,\infty}(\Omega)$,
and $\bn_h\in \bV_{h,\bn_0|\Gamma_1}$ satisfies
 $|\bn_h| \geq C>0$ and $\|\bn_h-\pi_h\bn\|_1 \leq \gamma/|\log(h)|^{1/2}$.
Then we can find a positive constant $\beta_2$, independent of $h$, such that
\begin{equation}
 \inf_{\mu \in V_{h,0|\Gamma_1}} \sup_{\bm \in \bV_{h,0|\Gamma_1}}
 \frac{\langle \pi_h[\bn_h\cdot\bm], \mu \rangle}{\|\mu\|_{-1} \|\bm\|_1} \geq \beta_2
\end{equation}
\end{theorem}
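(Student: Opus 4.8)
The plan is to establish this mesh-independent inf--sup bound by a discrete Fortin argument that transfers the continuous inf--sup condition for $b_2$ (inequality (\ref{infsupb2-con})) to the finite element pair $(V_{h,0|\Gamma_1},\bV_{h,0|\Gamma_1})$, closely following Hu, Li and Winther \cite{hu2009saddle}. Fix $\mu_h\in V_{h,0|\Gamma_1}$ with $\|\mu_h\|_{-1}=1$ and let $z\in H^1_{0|\Gamma_1}(\Omega)$ be its Riesz representative for the $H^1$ inner product, so that $\|z\|_1=\|\mu_h\|_{-1}=1$ and $\langle z,\mu_h\rangle=\|z\|_1^2=1$. Because $|\bn|=1$ almost everywhere, the continuous field $\bm^{*}:=\bn z$ lies in ${\mathbf H}^1_{0|\Gamma_1}$, obeys $\bn\cdot\bm^{*}=|\bn|^2 z=z$, whence $\langle\bn\cdot\bm^{*},\mu_h\rangle=1$, and $\|\bm^{*}\|_1\le C\|\bn\|_{W^{1,\infty}}$; this is exactly the quantitative content of the continuous inf--sup condition. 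The goal is then to produce $\bm_h\in\bV_{h,0|\Gamma_1}$ with $\langle\pi_h[\bn_h\cdot\bm_h],\mu_h\rangle\ge\tfrac12$ and $\|\bm_h\|_1\le C$, both with $C$ independent of $h$; dividing through yields the theorem with $\beta_2:=(2C)^{-1}$.

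I would take $\bm_h$ to be a Scott--Zhang quasi-interpolant of $\bm^{*}$ into $\bV_{h,0|\Gamma_1}$ that respects the homogeneous Dirichlet condition on $\Gamma_1$ (possibly corrected locally so that $\bm_h$ also reproduces the weighted averages $\int_\Omega\psi_j\,\bn_h\cdot\bm^{*}$ against the nodal basis $\{\psi_j\}$ of $V_{h,0|\Gamma_1}$, the corrector being stable up to a factor $|\log h|^{1/2}$; this is the device behind the logarithm in the hypothesis). Then $\|\bm_h\|_1\le C\|\bm^{*}\|_1\lesssim\|\bn\|_{W^{1,\infty}}$, so the whole matter reduces to the consistency estimate $|\langle\pi_h[\bn_h\cdot\bm_h]-\bn\cdot\bm^{*},\mu_h\rangle|\le\tfrac12$ for $h$ small. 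Splitting the integrand as $(\pi_h-I)(\bn_h\cdot\bm_h)+(\bn_h-\bn)\cdot\bm_h+\bn\cdot(\bm_h-\bm^{*})$, and noting that all three pieces vanish on $\Gamma_1$ (because $\bm_h$ and $\bm^{*}$ do) and hence may be paired with $\mu_h$ through $|\langle\phi,\mu_h\rangle|\le\|\phi\|_1\|\mu_h\|_{-1}$, one estimates: the mass-lumping term by super-approximation/interpolation bounds for $\pi_h$ on the piecewise-quadratic product $\bn_h\cdot\bm_h$, using $|\bn_h|\ge C$ to control $\bn_h$ and $|\bn_h|^{-2}$ in $L^\infty$; the interpolation term by $\|\bm_h-\bm^{*}\|_1\to 0$; and the director term by $\|(\bn_h-\bn)\cdot\bm_h\|_1\lesssim\|\nabla(\bn_h-\bn)\|_0\,\|\bm_h\|_{L^\infty}+\|\bn_h-\bn\|_{L^\infty}\,\|\nabla\bm_h\|_0$, in which the two-dimensional discrete Sobolev inequality $\|v_h\|_{L^\infty}\le C|\log h|^{1/2}\|v_h\|_1$ combined with the hypothesis $\|\bn_h-\pi_h\bn\|_1\le\gamma/|\log h|^{1/2}$ gives $\|\bn_h-\bn\|_{L^\infty}\le C\gamma+o(1)$ and a net director contribution $\lesssim\gamma+o(1)$. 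Taking $h$ small (and, if necessary, $\gamma$ below a fixed threshold depending only on the constant in (\ref{infsupb2-con}) and on shape-regularity) then makes the consistency error at most $\tfrac12$, which finishes the proof with a $\beta_2$ depending only on shape-regularity, $\|\bn\|_{W^{1,\infty}}$, the lower bound $C$ for $|\bn_h|$, and $\gamma$ --- never on $h$.

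The main obstacle is this consistency/stability step, in particular the mass-lumping term $\langle(\pi_h-I)(\bn_h\cdot\bm_h),\mu_h\rangle$. A crude bound that pairs everything in $L^2$ and uses the inverse estimate $\|\mu_h\|_0\le Ch^{-1}\|\mu_h\|_{-1}$ loses a full power of $h$ and fails; one must stay in the $H^1$--$H^{-1}$ duality, exploit the edge-bubble structure of $(\pi_h-I)(\bn_h\cdot\bm_h)$, and track precisely how $\nabla\bm_h$, $\nabla\bn_h$ and the lumping error feed into each term, so that the $|\log h|^{1/2}$ created by the discrete Sobolev inequality is exactly cancelled by the $|\log h|^{-1/2}$ built into $\|\bn_h-\pi_h\bn\|_1\le\gamma/|\log h|^{1/2}$, with $|\bn_h|\ge C$ guaranteeing that the lumped quantity $\pi_h[\bn_h\cdot\bm_h]$ is well behaved. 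This is precisely the delicate estimate carried out in \cite{hu2009saddle}; here the only genuine changes are notational --- the Dirichlet data are prescribed on the sub-boundary $\Gamma_1$ rather than on all of $\partial\Omega$, and $\bn,\mu$ are discretized by continuous piecewise linears --- so that argument transfers with cosmetic modifications, which is the meaning of ``a slight modification of the proof in \cite{hu2009saddle}''.
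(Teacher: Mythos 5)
The paper itself gives no proof of this theorem --- it only states that ``a slight modification of the proof in \cite{hu2009saddle} gives us the result'' --- so what you have written is a reconstruction of what that proof might look like, and the broad strategy you describe (a Fortin-type transfer of the continuous inf--sup bound, with the $2$D discrete Sobolev inequality supplying a $|\log h|^{1/2}$ that is cancelled by the hypothesis $\|\bn_h-\pi_h\bn\|_1\le\gamma/|\log h|^{1/2}$, and $|\bn_h|\ge C$ keeping the lumped products under control) is the right one and does match the spirit of the cited work.

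However, there is a genuine soft spot in your consistency decomposition, specifically the mass-lumping term $\|(\pi_h-I)(\bn_h\cdot\bm_h)\|_1$. On each triangle $K$ the standard interpolation bound gives $\|(\pi_h-I)(\bn_h\cdot\bm_h)\|_{H^1(K)}\lesssim h_K\,|\nabla\bn_h|_K\,|\nabla\bm_h|_K\,|K|^{1/2}$, and after writing $\nabla\bn_h=\nabla\pi_h\bn+\nabla(\bn_h-\pi_h\bn)$ the contribution of $\nabla\pi_h\bn$ is harmless because $\|\nabla\pi_h\bn\|_{L^\infty}\lesssim\|\bn\|_{W^{1,\infty}}$, but the contribution of $\nabla(\bn_h-\pi_h\bn)$ is not: after the elementwise inverse estimate one is left with $\sum_K\|\nabla(\bn_h-\pi_h\bn)\|_{L^2(K)}^2\|\nabla\bm_h\|_{L^2(K)}^2$, which is a product of two elementwise $L^2$ quantities that does not sum to something controlled by the global $L^2$ norms unless one of the two factors is controlled in an elementwise $L^\infty$ (or at least $L^p$, $p>2$) sense, and neither the hypotheses $|\bn_h|\ge C$ nor $\|\bn_h-\pi_h\bn\|_1\le\gamma/|\log h|^{1/2}$ gives that directly. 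Your sketch attributes the control of this term to $|\bn_h|\ge C$, but that only bounds $\bn_h$ and $|\bn_h|^{-2}$ in $L^\infty$, not $\nabla\bn_h$, so the stated ingredients are insufficient as written. The cleaner route --- and I believe the one actually taken in \cite{hu2009saddle} --- is to sidestep the lumping error altogether by choosing the test function so that the interpolated product is \emph{exact}: set $z_h$ to be a stable discrete approximation of $z$ in $V_{h,0|\Gamma_1}$ and take $\bm_h=\pi_h\bigl[z_h\,\bn_h/|\bn_h|^2\bigr]$, so that $\pi_h[\bn_h\cdot\bm_h]=z_h$ nodewise, whence $\langle\pi_h[\bn_h\cdot\bm_h],\mu_h\rangle=\langle z_h,\mu_h\rangle\to1$, and the entire difficulty is concentrated in the stability bound $\|\bm_h\|_1\lesssim\|z_h\|_1$, which is exactly where the discrete Sobolev inequality, $|\bn_h|\ge C$, and the $|\log h|$ cancellation against $\|\bn_h-\pi_h\bn\|_1$ are used. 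If you intend to keep your consistency-splitting route, you need to make the mass-lumping estimate precise (it is not a routine ``super-approximation'' bound in the stated generality); otherwise I would recommend switching to the exact Fortin construction above.
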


In general, analytical verification of the inf-sup conditions may not be easy.
However, in the discrete case, we can relate the inf-sup values to the singular
values of certain matrices. 

For the general inf-sup value $\beta_h$ in  
\begin{equation} \label{discrete-infsup}
\beta_h = 
\inf_{q_h\in \mathbb{P}_h, ||q_h||=1} 
\left\{ \sup_{v_h\in \mathbb{V}_h, ||v_h||=1} {b(q_h,v_h)} \right\}
\end{equation}
we have the following result (\cite{BrezziFortin1991}).
\begin{theorem}
 The inf-sup value $\beta_h$ in (\ref{discrete-infsup}) is equal to the smallest singular value of the matrix $S^{-\frac{1}{2}} B T^{-\frac{1}{2}}$,
 where the matrices $S$, $T$, $B$ are defined by the following equations
 \begin{align}
 \|q_h\|^2 ={}& \bq^{T} S \bq \\
 \|v_h\|^2 ={}& \bv^{T} T \bv \\
 b(q_h,v_h) ={}& \bq^{T} B \bv
\end{align}
 and $\bq$, $\bv$ are the degrees of freedom of $q_h$ and $v_h$ respectively.
\end{theorem}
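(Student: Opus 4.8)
The plan is to reduce the computation of $\beta_h$ to the standard Rayleigh-quotient characterization of the smallest singular value, by a congruence change of coordinates that turns the $S$- and $T$-weighted norms into ordinary Euclidean norms. First I would observe that since $\|\cdot\|$ is a genuine norm on each of the finite-dimensional spaces $\mathbb{P}_h$ and $\mathbb{V}_h$, the Gram matrices $S$ and $T$ are symmetric positive definite; hence their positive-definite square roots $S^{1/2}$, $T^{1/2}$ and the inverses $S^{-1/2}$, $T^{-1/2}$ exist. Fixing bases for $\mathbb{P}_h$ and $\mathbb{V}_h$ gives linear isomorphisms onto the coordinate spaces, so the infimum and supremum in (\ref{discrete-infsup}) may equivalently be taken over nonzero coordinate vectors $\bq$ and $\bv$, yielding
\[
 \beta_h = \inf_{\bq \neq 0}\ \sup_{\bv \neq 0}\ \frac{\bq^{T} B \bv}{(\bq^{T} S \bq)^{1/2}\,(\bv^{T} T \bv)^{1/2}}.
\]

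Next I would substitute $\bx = S^{1/2}\bq$ and $\by = T^{1/2}\bv$; this is a bijection of each coordinate space onto itself, so no nonzero vectors are gained or lost. Then $\bq^{T}S\bq = \|\bx\|^2$, $\bv^{T}T\bv = \|\by\|^2$, and $\bq^{T}B\bv = \bx^{T} M \by$ with $M := S^{-1/2} B T^{-1/2}$, so that
\[
 \beta_h = \inf_{\bx \neq 0}\ \sup_{\by \neq 0}\ \frac{\bx^{T} M \by}{\|\bx\|\,\|\by\|},
\]
where $\|\cdot\|$ is now the Euclidean norm. For each fixed $\bx$ the inner supremum is computed by Cauchy--Schwarz: $\bx^{T} M \by = (M^{T}\bx)\cdot\by \le \|M^{T}\bx\|\,\|\by\|$, with equality at $\by = M^{T}\bx$ when $M^{T}\bx \neq 0$ (and the supremum is $0$ otherwise), so the inner sup equals $\|M^{T}\bx\|/\|\bx\|$. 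Consequently
\[
 \beta_h = \inf_{\bx \neq 0} \frac{\|M^{T}\bx\|}{\|\bx\|} = \big(\lambda_{\min}(M M^{T})\big)^{1/2},
\]
by the Courant--Fischer characterization of the least eigenvalue of the symmetric positive semidefinite matrix $MM^{T}$; this is exactly the smallest singular value of $M = S^{-1/2} B T^{-1/2}$.

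The argument is essentially routine once the setup is pinned down, so the ``main obstacle'' is really just a matter of stating things carefully rather than a deep difficulty. The one point worth spelling out is the meaning of ``smallest singular value'' when $B$ is rectangular (the typical case, with more displacement than pressure degrees of freedom): it should be read as $\sqrt{\lambda_{\min}(MM^{T})}$, with $MM^{T}$ of size $\dim\mathbb{P}_h$. With this convention the identity above holds in all cases, including the degenerate one $\beta_h = 0$, which occurs precisely when $B^{T}$ (equivalently $M^{T}$) has nontrivial kernel --- for instance whenever $\dim \mathbb{V}_h < \dim \mathbb{P}_h$. It is also worth recording that the maximizing $\bv$ corresponding to the infimizing $\bq$ can be written explicitly as $\bv = T^{-1/2} M^{T} S^{1/2}\bq = T^{-1} B^{T}\bq$, which is convenient for the numerical inf-sup test referred to earlier in the paper.
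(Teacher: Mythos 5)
Your proof is correct. Note, however, that the paper itself does not prove this theorem: it is stated as a quoted result with a citation to Brezzi--Fortin, so there is no in-paper argument to compare against. Your derivation is the standard one for this fact, and it is exactly the reasoning one would want to see: pass from the unit-sphere formulation to the homogeneous Rayleigh-quotient form, apply the congruence $\bx = S^{1/2}\bq$, $\by = T^{1/2}\bv$ to reduce the weighted norms to Euclidean ones, evaluate the inner supremum by Cauchy--Schwarz to get $\|M^{T}\bx\|/\|\bx\|$ with $M = S^{-1/2}BT^{-1/2}$, and finish with Courant--Fischer applied to $MM^{T}$. The only subtlety you needed to handle---and did---is that when $B$ is rectangular, ``smallest singular value'' must be read as $\sqrt{\lambda_{\min}(MM^{T})}$, with $MM^{T}$ of size $\dim\mathbb{P}_h$; otherwise the statement would be false whenever $\dim\mathbb{V}_h > \dim\mathbb{P}_h$. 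Your change-of-variables technique is also precisely the one the paper uses in its own proof of the adjacent Theorem \ref{thm:disc-ellip} (for the restricted inf-sup and ellipticity constants on $\mathrm{Ker}(\mathscr{B}_h)$), so your argument is stylistically consistent with the surrounding text, and the closing remark identifying the optimal $\bv = T^{-1}B^{T}\bq$ is a useful addition for implementing the numerical inf-sup test.
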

To verify the inf-sup condition or ellipticity condition for $a(\cdot, \cdot)$ 
on $\mathrm{Ker}(\mathscr{B}_h)$, we are interested in calculating the inf-sup value $\hat{\beta}_h$ in
\begin{equation} \label{discrete-infsup-a}
 \hat{\beta}_h = 
 \inf_{\|u_h\|=1} \sup_{\|v_h\|=1} a(u_h, v_h) 
  \quad \text{ on } Ker(\mathscr{B}_h) \subset \mathbb{V}_h
\end{equation}
and the ellipticity constant $\hat{\alpha}_h$ in
\begin{equation} \label{discrete-ellip}
 \hat{\alpha}_h = 
 \inf_{\|v_h\|=1} a(v_h, v_h) 
  \quad \text{ on } Ker(\mathscr{B}_h) \subset \mathbb{V}_h
\end{equation}
as well.
\begin{theorem} \label{thm:disc-ellip}
 The inf-sup value $\hat{\beta}_h$ in (\ref{discrete-infsup-a}) is equal to the smallest singular value of the matrix $A_1$,
 while the ellipticity constant $\hat{\alpha}_h$ in (\ref{discrete-ellip}) is equal to the smallest eigenvalue of the matrix $A_1$,
 where $A_1$ is the lower right $(n-m)\times(n-m)$ corner of the matrix $Q^T T^{-1/2} A T^{-1/2} Q$. Here $n$ is the dimension
 of $\mathbb{V}_h$, $m$ is the dimension of $\mathbb{P}_h$, and the matrices $T$, $A$, $B$ are defined by the following equations
\begin{align*}
 \|v_h\|^2 ={}& \bv^{T} T \bv \\
 a(u_h,v_h) ={}& \bu^{T} A \bv \\
 v_h\in \mathrm{Ker}(\mathscr{B}_h) \Leftrightarrow{}& B\bv=0 
\end{align*}
where $\bu$, $\bv$ are the degrees of freedom of $u_h$ and $v_h$ respectively.
Here $B$ is assumed to be full-rank and 
the matrix $Q$ is defined by the QR decomposition of $(BT^{-1/2})^T$
\begin{align*}
  (BT^{-1/2})^T ={}& Q
\begin{pmatrix}
  R \\
  0
\end{pmatrix}
\end{align*}
\end{theorem}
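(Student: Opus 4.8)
The plan is to reduce the abstract inf-sup and ellipticity computation for $a(\cdot,\cdot)$ on $\mathrm{Ker}(\mathscr{B}_h)$ to a finite-dimensional linear-algebra statement by choosing a convenient basis for the kernel. First I would express the bilinear form $a(u_h,v_h)$ in coordinates as $\bu^T A \bv$ and the norm as $\bv^T T \bv$ with $T$ symmetric positive definite, so that $T^{-1/2}$ is well-defined; then the substitution $\bv = T^{-1/2}\tilde\bv$ turns the problem into the \emph{standard Euclidean} inf-sup/eigenvalue problem for the symmetrized matrix $\tilde A := T^{-1/2} A T^{-1/2}$ over the subspace $\{\tilde\bv : B T^{-1/2}\tilde\bv = 0\}$, and the norm becomes the ordinary $\ell^2$ norm. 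This step is routine but must be done carefully because $a(\cdot,\cdot)$ need not be symmetric, so $A$ need not be symmetric, and one has to keep the inf-sup value (smallest singular value) and ellipticity constant (smallest eigenvalue of the symmetric part, which for the quadratic form $v^T A v = v^T \tfrac{A+A^T}{2} v$ is what actually appears) conceptually distinct.

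Next I would handle the kernel constraint. Since $B$ is assumed full-rank (rank $m$), the matrix $(BT^{-1/2})^T \in \mathbb{R}^{n\times m}$ has rank $m$, so its QR decomposition
\begin{align*}
 (BT^{-1/2})^T ={}& Q \begin{pmatrix} R \\ 0 \end{pmatrix}
\end{align*}
has $Q \in \mathbb{R}^{n\times n}$ orthogonal and $R \in \mathbb{R}^{m\times m}$ upper triangular and invertible. Writing $Q = (Q_1 \mid Q_2)$ with $Q_1$ the first $m$ columns and $Q_2$ the last $n-m$, the condition $BT^{-1/2}\tilde\bv = 0$ becomes $R^T Q_1^T \tilde\bv = 0$, i.e. $Q_1^T\tilde\bv=0$, i.e. $\tilde\bv \in \mathrm{range}(Q_2)$. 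So I parametrize $\tilde\bv = Q_2\by$ with $\by \in \mathbb{R}^{n-m}$; because $Q_2$ has orthonormal columns, $\|\tilde\bv\|_2 = \|\by\|_2$, and the constrained problem becomes the unconstrained problem for the matrix $Q_2^T \tilde A Q_2$ on all of $\mathbb{R}^{n-m}$ with the Euclidean norm. I would then observe that $Q_2^T \tilde A Q_2$ is exactly the lower-right $(n-m)\times(n-m)$ block $A_1$ of $Q^T \tilde A Q = Q^T T^{-1/2} A T^{-1/2} Q$, since $Q^T \tilde A Q$ has block form with $(2,2)$ entry $Q_2^T \tilde A Q_2$.

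Finally I would invoke the elementary characterizations: for a square matrix $M$, $\inf_{\|x\|=1}\sup_{\|y\|=1} y^T M x$ equals $\inf_{\|x\|=1}\|M x\|$ (taking the optimal $y = Mx/\|Mx\|$ when $Mx\neq0$, and the value $0$ otherwise), which is the smallest singular value of $M$; and $\inf_{\|x\|=1} x^T M x$ equals the smallest eigenvalue of the symmetric part $(M+M^T)/2$, which coincides with the smallest eigenvalue of $M$ when $M$ is symmetric. Applying this with $M = A_1$ gives $\hat\beta_h = \sigma_{\min}(A_1)$ and $\hat\alpha_h = \lambda_{\min}(A_1)$ (under the tacit convention that $A$, hence $A_1$, is taken symmetric, which is natural since only the symmetric part of $a$ contributes to $a(v,v)$). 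The main obstacle is bookkeeping rather than conceptual depth: one must verify that the isometries $\bv\mapsto T^{-1/2}\tilde\bv$ and $\tilde\bv\mapsto Q_2\by$ correctly preserve the chosen norms $\|\cdot\|$ on $\mathbb{V}_h$ all the way through, that the kernel condition transforms exactly as claimed, and that the relevant block of $Q^T\tilde A Q$ is genuinely $Q_2^T\tilde A Q_2$; a secondary subtlety worth a remark is clarifying the (a)symmetry issue so that "smallest singular value" versus "smallest eigenvalue" is unambiguous.
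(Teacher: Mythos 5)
Your proposal is correct and follows essentially the same route as the paper's proof: the change of variables via $T^{-1/2}$ to reduce to the Euclidean norm, the QR factorization of $(BT^{-1/2})^T$ to parametrize $\mathrm{Ker}(\tilde{B})$ by the trailing $n-m$ columns of $Q$, identification of the lower-right block $A_1 = Q_2^T T^{-1/2} A T^{-1/2} Q_2$, and the standard smallest-singular-value / smallest-eigenvalue characterizations. Your added remark that $\hat\alpha_h$ is strictly the smallest eigenvalue of the \emph{symmetric part} of $A_1$ is a useful clarification that the paper leaves tacit, though in this setting $a(\cdot,\cdot)$ arises as the Hessian of an energy functional and is symmetric, so the distinction is benign.
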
 
\begin{proof}
 First, let ${\bx} = T^{\frac{1}{2}} \bu$, $\by = T^{\frac{1}{2}} \bv$
then we have
\begin{equation}
 \hat{\beta}_h = 
 \inf_{{\bx} \in \mathrm{Ker}(\tilde{B})} \sup_{\by \in \mathrm{Ker}(\tilde{B})}
\frac{{\bx}^{T} \tilde{A} \by}{\sqrt{{\bx}^{T} {\bx}}\sqrt{\by^{T} \by}}
\end{equation}
where $\tilde{B}=BT^{-1/2}$, and $\tilde{A}=T^{-1/2} A T^{-1/2}$.

Since $\tilde{B}$ is full-rank, 
the matrix $R \in \mathbb{M}^{m \times m}$ 
in the QR decomposition
\begin{equation}
  \tilde{B}^{T} = Q
\begin{pmatrix}
  R \\
  0
\end{pmatrix}
\end{equation}
is non-singular.
Let
\begin{equation}
 Q^{T} {\bx} =
\begin{pmatrix}
 {\bw}_x \\
 {\bz}_x
\end{pmatrix}
\end{equation}
where ${\bw}_x \in \mathbb{R}^{m}$ and ${\bz}_x \in \mathbb{R}^{n-m}$.
Then it's easy to verify that
\begin{align*}
  {\bx} \in Ker(\tilde{B})
  &\Leftrightarrow {\bw}_x = 0
\end{align*}
Thus there will be no constraint on ${\bz}_x$.
Therefore
\begin{equation}
 \inf_{{\bx} \in \mathrm{Ker}(\tilde{B})} \sup_{{\bx} \in \mathrm{Ker}(\tilde{B})}
 \frac{{\bx}^{T} \tilde{A}\by}{\sqrt{{\bx}^{T} {\bx}}\sqrt{\by^{T} \by}}
 = \inf_{{\bz}_x} \sup_{{\bz}_y} \frac{{\bz}_x^T A_1 {\bz}_y}
    {\sqrt{{\bz}_x^T {\bz}_x}\sqrt{{\bz}_y^T {\bz}_y}}
\end{equation}
where $A_1$ is the lower right $(n-m)\times(n-m)$ corner of the matrix $Q^T \tilde{A} Q$.
Thus $\hat{\beta}_h$ is the smallest singular value of the matrix $A_1$.

Similar arguments reduce $\hat{\alpha}_h$ to the smallest eigenvalue of
the matrix $A_1$.
\end{proof}
\begin{remark}
 The matrix $B$ is full-rank if and only if the operator $\mathscr{B}_h$
 is onto, which is true if and only if the corresponding inf-sup
 condition holds.
\end{remark}

To compute the inf-sup value for $b_2$, we need to calculate the $H^{-1}_{\Gamma_1}$ norm
for any function $v_h$ in $V_{h, 0|\Gamma_1}$. By the Riesz Representation Theorem,
we can find $\tilde{v} \in H_{0|\Gamma_1}^1$, such that $\|v_h\|_{H^{-1}_{\Gamma_1}}=\|\tilde{v}\|_{H^1}$.
The $H^1$ norm of $\tilde{v}$ can be approximated by the $H^1$ norm of $\hat{v}_h$,
which is the $L^2$ projection of $\tilde{v} \in  H_{0|\Gamma_1}^1$ into $V_{h, 0|\Gamma_1}$.
Let $\{\varphi_i, i=1,...,n\}$ be a basis of $V_{h, 0|\Gamma_1}$. We want to assemble the matrix $S$ such that
\begin{equation*}
  \|v_h\|_{H^{-1}_{\Gamma_1}} \approx \|\hat{v}_h\|_{H^1} = \bv^T S \bv 
\end{equation*}
where $\bv \in \mathbb{R}^n$ is the degree of freedom for $v_h$.
\begin{theorem}
 The matrix $S=AB^{-1}A$, where the matrices $A$ and $B$ satisfy
 \begin{align*}
  \|v_h\|_{L^2} ={}& \bv^T A \bv  \\
  \|v_h\|_{H^1} ={}& \bv^T B \bv
 \end{align*}
 for any $v_h$ in $V_{h, 0|\Gamma_1}$, and $\bv \in \mathbb{R}^n$ is the degree of freedom for $v_h$.
\end{theorem}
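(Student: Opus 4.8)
The plan is to unwind the definition of the $H^{-1}_{\Gamma_1}$ norm via the Riesz Representation Theorem, pass to the computable discrete surrogate $\hat v_h$ described just before the statement, translate the defining variational identity into matrix form, and then read off $S$ from a short linear-algebra computation.

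First I would record that, viewing $v_h\in V_{h,0|\Gamma_1}\subset L^2(\Omega)$ as an element of $H^{-1}_{\Gamma_1}=(H^1_{0|\Gamma_1})'$ through the pairing $w\mapsto (v_h,w)_{L^2}$, its dual norm is
\[
 \|v_h\|_{H^{-1}_{\Gamma_1}}=\sup_{0\neq w\in H^1_{0|\Gamma_1}}\frac{(v_h,w)_{L^2}}{\|w\|_{H^1}}=\|\tilde v\|_{H^1},
\]
where $\tilde v\in H^1_{0|\Gamma_1}$ is the Riesz representative, i.e. the unique solution of $(\tilde v,w)_{H^1}=(v_h,w)_{L^2}$ for all $w\in H^1_{0|\Gamma_1}$. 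Restricting the test space to $V_{h,0|\Gamma_1}$ gives the computable approximation $\hat v_h\in V_{h,0|\Gamma_1}$ determined by $(\hat v_h,w_h)_{H^1}=(v_h,w_h)_{L^2}$ for all $w_h\in V_{h,0|\Gamma_1}$; note $(\tilde v-\hat v_h,w_h)_{H^1}=0$ for all such $w_h$, so $\hat v_h$ is the Galerkin approximation of $\tilde v$, and we take $\|v_h\|_{H^{-1}_{\Gamma_1}}\approx\|\hat v_h\|_{H^1}$ as in the paragraph preceding the statement.

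Next I would expand $v_h=\sum_i v_i\varphi_i$, $\hat v_h=\sum_i \hat v_i\varphi_i$ and $w_h=\sum_i w_i\varphi_i$ in the basis $\{\varphi_i\}$ of $V_{h,0|\Gamma_1}$. Since $A$ and $B$ are the Gram matrices of this basis for the $L^2$ and $H^1$ inner products, polarization gives $(v_h,w_h)_{L^2}=\bw^T A\bv$ and $(\hat v_h,w_h)_{H^1}=\bw^T B\hat{\bv}$, so the defining identity becomes $\bw^T B\hat{\bv}=\bw^T A\bv$ for every $\bw$, i.e. $B\hat{\bv}=A\bv$. Because $B$ is symmetric positive definite it is invertible, whence $\hat{\bv}=B^{-1}A\bv$, and therefore
\[
 \|v_h\|_{H^{-1}_{\Gamma_1}}^2\approx\|\hat v_h\|_{H^1}^2=\hat{\bv}^T B\hat{\bv}=(B^{-1}A\bv)^T B(B^{-1}A\bv)=\bv^T A B^{-1}A\bv,
\]
using the symmetry of $A$ and $B$. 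Hence $S=AB^{-1}A$, which is manifestly symmetric (and positive semidefinite, positive definite when $A$ is), as claimed.

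There is no real obstacle here: once the discrete problem is correctly identified, the proof is a routine computation. The one point that genuinely needs care is the first reduction — correctly identifying the functional represented by $v_h$ on $H^1_{0|\Gamma_1}$ and choosing the matching discrete test space, so that the algebraic system reads $B\hat{\bv}=A\bv$ rather than, say, $A\hat{\bv}=B\bv$ or a pure mass‑matrix system; a different choice would produce a different $S$. I would also state explicitly that $B$ is invertible — equivalently, that $\Gamma_1$ carries enough of $\partial\Omega$ for the chosen $H^1$ norm to be a genuine norm on the nonzero elements of $V_{h,0|\Gamma_1}$, which makes $B$ SPD.
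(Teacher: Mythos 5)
Your proof is correct and follows essentially the same route as the paper: both pass from the $H^{-1}_{\Gamma_1}$ dual norm to the discrete Riesz projection $\hat v_h$, translate the Galerkin identity $(\hat v_h, w_h)_{H^1} = (v_h, w_h)_{L^2}$ into a matrix equation (you write it as $B\hat{\bv}=A\bv$; the paper equivalently sets $\hat\varphi_i=\sum_k G_{ik}\varphi_k$ and deduces $G=AB^{-1}$), and then read off $S=AB^{-1}A$ by evaluating the $H^1$ norm of $\hat v_h$. Your added remarks on the invertibility of $B$ and the symmetry and positive semidefiniteness of $S$ are small but sensible additions not spelled out in the paper.
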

\begin{proof}
 Let $f: H^{-1}_{\Gamma_1} \rightarrow V_{h, 0|\Gamma_1}$ be the map taking $v_h$ to $\hat{v}_h$,
 and let $\hat{\varphi}_i = f(\varphi_i)$. It's easy to see that
 \begin{equation}
  S_{ij} = \langle \hat{\varphi}_i, \hat{\varphi}_j \rangle_{H^1}
 \end{equation}
By definition of $\hat{\varphi}_i$, we have
\begin{equation} \label{Geqn}
 \int\varphi_i\varphi_j = \int D\hat{\varphi}_i D\varphi_j + \int \hat{\varphi}_i \varphi_j  \qquad \forall 1\leq i,j\leq n
\end{equation}
Since $\hat{\varphi}_i \in V_{h, 0|\Gamma_1}$, we can write
\begin{equation*}
 \hat{\varphi}_i = \sum_k G_{ik} \varphi_k
\end{equation*}
Substituting it into (\ref{Geqn}) gives
\begin{align*}
 \int \varphi_i \varphi_j ={}& \sum_k  G_{ik} \left(\int D\varphi_k \cdot D\varphi_j +\int \varphi_k \varphi_j \right)
\end{align*}
That is $A=GB$, or $G=AB^{-1}$.
Therefore
\begin{align*}
 S ={}& (\langle \hat{\varphi}_i, \hat{\varphi}_j \rangle_{H^1})  \\
   ={}& (D\hat{\varphi}_i, D\hat{\varphi}_j)+(\hat{\varphi}_i, \hat{\varphi}_j) \\
   ={}& \sum_{p,q} G_{ip} G_{jq} \left[ (D\varphi_p, D\varphi_q)+(\varphi_p, \varphi_q) \right] \\
   ={}& \sum_{p,q} (G_{ip} B_{pq} G^T_{qj}) \\
   ={}& GBG^T \\
   ={}& (AB^{-1})B(B^{-1}A) \\
   ={}& AB^{-1}A
\end{align*}
\end{proof}

\subsection{Existence and Uniqueness of the Lagrange multipliers}
Le Tallec proved in \cite{le1981compatibility} that if the inf-sup condition is satisfied at $\bu_h$, 
then there exists a \textit{unique} $p_h$ such that $(\bu_h, p_h)$ is a solution of the discrete equilibrium
equations of the incompressible elasticity. 
In this section, we will prove similar results for our elastomer problem.

The proof uses the following result of Clarke \cite{clarke1976new} on constrained minimization problems,
where $\partial g(x)$ is the generalized gradient introduced in \cite{clarke1975generalized}.
\begin{theorem} \label{thm:Clarke1976}
 Denote $J$ a finite set of integers. We suppose given: $E$ a Banach space, $g_0$, $g_j (j\in J)$
 locally Lipschitz functions from $E$ to $\mathbb{R}$, and $C$ a closed subset of $E$. We consider
 the following problem:
 \begin{align}
  &\text{Minimize } g_0(x) \nonumber \\
  &\text{subject to } x\in C, \quad g_j(x)=0, \quad \forall j\in J. \label{eqn:ConsMin}
 \end{align}
 If $\bar{x}$ is a local solution of (\ref{eqn:ConsMin}) then there exist real numbers 
 $r_0$, $s_j$ not all zero, and a point $\xi$ in the dual space $E'$ of $E$ such that:
 \begin{equation}
  \xi \in r_0 \partial g_0(\bar{x})+\sum_{j} s_j \partial g_j(\bar{x}), \qquad -\xi\in N_c(\bar{x}),
 \end{equation}
 where $N_c(\bar{x})$ is the normal cone at $C$ in $\bar{x}$, $\partial g_j$ is the generalized
 gradient of $g_j(x)$.
\end{theorem}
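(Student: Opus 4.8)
The plan is to obtain this nonsmooth multiplier rule by an exact-penalization argument combined with Ekeland's variational principle — the standard device for turning a constrained local minimum into an unconstrained approximate minimum to which the generalized-gradient calculus of \cite{clarke1975generalized} applies — and to note that, by design, no constraint qualification is needed, which is precisely what lets the conclusion be of Fritz John type (with possibly $r_0=0$). First I would use the local optimality of $\bar x$ in (\ref{eqn:ConsMin}) to fix $\delta>0$ with $g_0(\bar x)\le g_0(x)$ whenever $x\in C\cap\bar B(\bar x,\delta)$ and $g_j(x)=0$ for all $j\in J$, and for $\varepsilon>0$ set
\[
 \Phi_\varepsilon(x)=\Big[\,\big((g_0(x)-g_0(\bar x)+\varepsilon^2)^+\big)^2+\sum_{j\in J} g_j(x)^2\,\Big]^{1/2}.
\]
This $\Phi_\varepsilon$ is locally Lipschitz and nonnegative with $\Phi_\varepsilon(\bar x)=\varepsilon^2$; the offset $\varepsilon^2$ is what makes $\bar x$ a genuine $\varepsilon^2$-approximate (rather than exact) minimizer of $\Phi_\varepsilon$ over the complete metric space $C\cap\bar B(\bar x,\delta)$, so that Ekeland's principle has something to act on.

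Next I would apply Ekeland's variational principle to obtain $x_\varepsilon\in C\cap\bar B(\bar x,\delta)$ with $\|x_\varepsilon-\bar x\|\le\varepsilon$, $\Phi_\varepsilon(x_\varepsilon)\le\varepsilon^2$, and such that $x\mapsto\Phi_\varepsilon(x)+\varepsilon\|x-x_\varepsilon\|$ attains its minimum over $C\cap\bar B(\bar x,\delta)$ at $x_\varepsilon$; for $\varepsilon$ small $x_\varepsilon$ is interior to the ball, hence a local minimizer over $C$. The essential nondegeneracy point is that $\Phi_\varepsilon(x_\varepsilon)>0$: were it zero, $x_\varepsilon$ would be feasible and in $\bar B(\bar x,\delta)$ with $g_0(x_\varepsilon)\le g_0(\bar x)-\varepsilon^2<g_0(\bar x)$, contradicting the choice of $\delta$. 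Hence the multipliers
\[
 \mu_0^\varepsilon=\frac{(g_0(x_\varepsilon)-g_0(\bar x)+\varepsilon^2)^+}{\Phi_\varepsilon(x_\varepsilon)}\ \ (\ge 0),\qquad \mu_j^\varepsilon=\frac{g_j(x_\varepsilon)}{\Phi_\varepsilon(x_\varepsilon)}
\]
are well defined and satisfy $(\mu_0^\varepsilon)^2+\sum_{j\in J}(\mu_j^\varepsilon)^2=1$. Writing the necessary optimality condition at $x_\varepsilon$ in subdifferential form — Clarke's sum rule, the chain rule through the outer function $\psi(t_0,t)=((t_0^+)^2+|t|^2)^{1/2}$ (which is $C^1$ wherever it is positive, since $s\mapsto(s^+)^2$ is $C^1$ and $\sqrt{\cdot}$ is $C^1$ on $(0,\infty)$, and $\Phi_\varepsilon(x_\varepsilon)>0$ puts us at such a point), and the normal cone $N_C(x_\varepsilon)$ for the set constraint — produces some $\xi_\varepsilon\in E'$ with
\[
 \xi_\varepsilon\in\mu_0^\varepsilon\,\partial g_0(x_\varepsilon)+\sum_{j\in J}\mu_j^\varepsilon\,\partial g_j(x_\varepsilon),\qquad -\xi_\varepsilon\in\varepsilon B_{E'}+N_C(x_\varepsilon).
\]

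Finally I would let $\varepsilon\downarrow 0$. The vector $(\mu_0^\varepsilon,(\mu_j^\varepsilon)_{j\in J})$ ranges over the unit sphere of $\RR^{1+|J|}$, so along a subsequence it converges to $(r_0,(s_j))$ with $r_0\ge 0$ and $r_0^2+\sum_{j\in J}s_j^2=1$; in particular $r_0$ and the $s_j$ are not all zero. Since $x_\varepsilon\to\bar x$ and each $\partial g_j$ is bounded by the local Lipschitz constant of $g_j$, the $\xi_\varepsilon$ are bounded in $E'$; a weak${}^*$ limit point $\xi$ then lies in $r_0\,\partial g_0(\bar x)+\sum_{j\in J}s_j\,\partial g_j(\bar x)$ by the weak${}^*$ upper semicontinuity of the generalized gradient, and satisfies $-\xi\in N_C(\bar x)$ by the closure property of the Clarke normal cone, which is exactly the assertion of Theorem \ref{thm:Clarke1976}. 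The step I expect to be the real obstacle is this last passage to the limit in a general Banach space $E$, where bounded subsets of $E'$ need not be weak${}^*$ sequentially compact, so one must work with nets (using the graph-closure of $\partial g_j$ and of $N_C$) or reduce to a separable closed subspace; a secondary and more routine check is the applicability of Clarke's sum and chain rules at $x_\varepsilon$, which is clean here precisely because the $\varepsilon^2$ offset and the nondegeneracy $\Phi_\varepsilon(x_\varepsilon)>0$ keep us away from the only nonsmooth point of $\psi$.
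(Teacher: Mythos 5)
The paper does not prove this theorem; it is quoted verbatim from Clarke's 1976 paper \cite{clarke1976new} and used as a black box in the argument for existence and uniqueness of the discrete Lagrange multipliers. So there is no ``paper's own proof'' to compare against, and your attempt has to be judged on its own.

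Your sketch is a correct and standard modern treatment: the Ekeland-variational-principle plus exact-penalization route, essentially the one Clarke himself adopted in his later book exposition. The key moves are all sound: the $\varepsilon^2$ offset makes $\bar x$ an $\varepsilon^2$-minimizer of the nonnegative merit function $\Phi_\varepsilon$, Ekeland yields a nearby point $x_\varepsilon$ at which $\Phi_\varepsilon+\varepsilon\|\cdot-x_\varepsilon\|$ attains a local minimum over $C$, the nondegeneracy $\Phi_\varepsilon(x_\varepsilon)>0$ follows because a zero would produce a strictly better feasible point in $\bar B(\bar x,\delta)$, the outer function $\psi(t_0,t)=((t_0^+)^2+|t|^2)^{1/2}$ is $C^1$ wherever it is positive so the Clarke chain rule reduces to the smooth one, and the multipliers are automatically normalized on the unit sphere of $\RR^{1+|J|}$. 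Two points deserve more care than the sketch gives them. First, in a general Banach space the final passage to the limit cannot be done with sequences; one must use nets (Banach--Alaoglu gives weak$^*$ compactness, not sequential compactness), and then invoke the weak$^*$ closed-graph property of $\partial g_j$. You flag this yourself, correctly. Second, and you do not flag this, the step $-\xi\in N_C(\bar x)$ is not as immediate as ``closure of the Clarke normal cone'' suggests: the graph of the set-valued map $x\mapsto N_C(x)$ need not be weak$^*$ closed in a useful way when the normals $n_\varepsilon$ are only known to lie in $N_C(x_\varepsilon)$ with a uniform bound. The clean fix is the standard one: replace the set constraint $x\in C$ by a Lipschitz penalty $K\,d_C(\cdot)$ for a sufficiently large fixed $K$ (available because $\Phi_\varepsilon+\varepsilon\|\cdot-x_\varepsilon\|$ has a Lipschitz constant bounded uniformly in $\varepsilon$), so that the intermediate normals live in $K\,\partial d_C(x_\varepsilon)$, a uniformly bounded set whose graph \emph{is} weak$^*$ closed by the usual Lipschitz-subdifferential argument, and then observe $K\,\partial d_C(\bar x)\subset N_C(\bar x)$. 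With that adjustment the argument is complete.
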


\begin{theorem}
 Suppose $(\bu_h, \bn_h) \in K_h\times N_h$, and at $(\bu_h, \bn_h)$,
 the inf-sup conditions for $b_1$ and $b_2$ 
 are both satisfied. Then there exist a unique $p_h \in V_h$ and a unique $\lambda_h \in V_{h, \lambda_0|\Gamma_0}$ 
 such that $(\bu_h, \bn_h, p_h, \lambda_h)$ is a solution of the discrete equilibrium equations 
 (\ref{d2EL1})-(\ref{d2EL4}).
\end{theorem}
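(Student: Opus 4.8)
The plan is to adapt Le~Tallec's argument for incompressible elasticity \cite{le1981compatibility}, replacing the single incompressibility constraint by the two finite families of constraints $g_j=0$ of (\ref{eqn:defn-gj}) and invoking Clarke's multiplier rule, Theorem~\ref{thm:Clarke1976}. Throughout we regard $(\bu_h,\bn_h)$ as a solution of the discrete minimization problem (\ref{problem:disc-minBTWOF}); in particular it is a local solution of the problem of minimizing $\Pi$ over
\[
  C:=\bigl(\bW_{h,0|\Gamma_0}+\bu_{0h}\bigr)\times\bigl(\bV_{h,0|\Gamma_1}+\bn_{0h}\bigr)
\]
subject to $g_j(\bu_h,\bn_h)=0$ for $j=1,\dots,N+M$. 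The set $C$ is a closed affine subspace of the finite-dimensional space $\bW_h\times\bV_h$, and $\Pi$ and every $g_j$ are of class $C^1$ there, so their Clarke generalized gradients reduce to ordinary derivatives and Theorem~\ref{thm:Clarke1976} applies.

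First I would invoke Theorem~\ref{thm:Clarke1976} to obtain reals $r_0,s_1,\dots,s_{N+M}$, not all zero, and an element $\xi$ of the dual space with $\xi=r_0\,D\Pi(\bu_h,\bn_h)+\sum_{j}s_j\,Dg_j(\bu_h,\bn_h)$ and $-\xi\in N_C(\bu_h,\bn_h)$. Since $C$ is affine with direction $\bW_{h,0|\Gamma_0}\times\bV_{h,0|\Gamma_1}$, its normal cone is the annihilator of that subspace, so evaluating the identity on an arbitrary test pair $(\bv,\bm)\in\bW_{h,0|\Gamma_0}\times\bV_{h,0|\Gamma_1}$ gives
\[
  r_0\,\langle D\Pi(\bu_h,\bn_h),(\bv,\bm)\rangle+\sum_{j=1}^{N}s_j\int_\Omega \varphi_j\,\frac{\partial \det }{\partial F_h}:\nabla\bv+\sum_{j=N+1}^{N+M}s_j\int_\Omega 2\,\psi_{j-N}\,\pi_h(\bn_h,\bm)=0,
\]
where I used that $\pi_h$ is linear, so that $D\bigl[\pi_h(|\bn_h|^2-1)\bigr](\bm)=2\pi_h(\bn_h,\bm)$, and that $g_j$ for $j>N$ is independent of $\bu_h$. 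A direct computation shows that $\langle D\Pi(\bu_h,\bn_h),(\bv,\bm)\rangle$ reproduces the multiplier-free parts of (\ref{d2EL1})--(\ref{d2EL2}); hence, taking $\bm=0$ and then $\bv=0$, the identity splits into two relations of the form $r_0(\cdots)=b_1(q_h,\bv)$ and $r_0(\cdots)=b_2(\mu_h,\bm)$, with $q_h:=\sum_{j\le N}s_j\varphi_j\in V_h$ and $\mu_h:=\sum_{j>N}s_j\psi_{j-N}\in V_{h,0|\Gamma_1}$, up to the sign conventions of (\ref{b1})--(\ref{b2}).

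The crux is to show $r_0\ne 0$, and this is exactly where the two inf-sup hypotheses enter. If $r_0=0$, then $b_1(q_h,\bv)=0$ for every $\bv$, so the inf-sup condition for $b_1$ at $(\bu_h,\bn_h)$ forces $q_h=0$, and since $\{\varphi_j\}$ is a basis of $V_h$, this gives $s_j=0$ for all $j\le N$; likewise $b_2(\mu_h,\bm)=0$ for every $\bm$, so the inf-sup condition for $b_2$ forces $\mu_h=0$ and hence $s_j=0$ for all $j>N$. Then $r_0$ and every $s_j$ vanish, contradicting Theorem~\ref{thm:Clarke1976}. Therefore $r_0\ne 0$; dividing through by $r_0$ and putting $p_h:=-\frac{1}{r_0}\sum_{j\le N}s_j\varphi_j\in V_h$ and $\lambda_h:=\frac{1}{r_0}\sum_{j>N}s_j\psi_{j-N}$ (shifted by the prescribed lift of $\lambda_0$ so that $\lambda_h\in V_{h,\lambda_0|\Gamma_1}$), the equations (\ref{d2EL1})--(\ref{d2EL2}) hold for all test functions, while (\ref{d2EL3})--(\ref{d2EL4}) hold automatically because $(\bu_h,\bn_h)\in K_h\times N_h$; hence $(\bu_h,\bn_h,p_h,\lambda_h)$ solves the discrete equilibrium equations. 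For uniqueness, if $(p_h,\lambda_h)$ and $(p_h',\lambda_h')$ both solve the system, subtracting (\ref{d2EL1}) gives $b_1(p_h-p_h',\bv)=0$ for all $\bv$, whence $p_h=p_h'$ by the inf-sup condition for $b_1$; subtracting (\ref{d2EL2}) gives $b_2(\lambda_h-\lambda_h',\bm)=0$ for all $\bm$, whence $\lambda_h=\lambda_h'$ by the inf-sup condition for $b_2$.

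I expect the main obstacle to be ruling out the abnormal multiplier $r_0=0$ --- the single place where the inf-sup conditions are genuinely needed --- together with the bookkeeping of matching each generalized gradient $Dg_j$ to the bilinear forms $b_1$ and $b_2$, in particular carrying the interpolation operator $\pi_h$ through the differentiation of the constraints with $j>N$ and keeping the signs straight. The remaining ingredients --- $C^1$-regularity of $\Pi$ and the $g_j$ on the finite-dimensional space, and the description of $N_C$ for an affine subspace --- are routine.
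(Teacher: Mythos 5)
Your argument is essentially the paper's own: set $E=C$ as the affine solution space (so $N_C=\{0\}$), invoke Clarke's multiplier rule on the $C^1$ functionals $\Pi$ and $g_j$, rule out $r_0=0$ because it would force both $q_h=\sum_{j\le N}s_j\varphi_j$ and $\mu_h=\sum_{j>N}s_j\psi_{j-N}$ to vanish by the respective inf-sup conditions (contradicting ``not all zero''), divide by $r_0$ to identify the multipliers, and obtain uniqueness by subtracting two candidate pairs and again appealing to inf-sup. The only wrinkle is your ``shift by the prescribed lift of $\lambda_0$'': that would perturb the term $\int_\Omega 2\lambda_h\,\pi_h(\bn_h,\bm)\,dx$ in (\ref{d2EL2}), since the boundary nodal values of $\lambda_h$ do contribute to this integral (the $P_1$ interpolant $\pi_h(\bn_h,\bm)$ vanishes at boundary nodes but not on the elements touching $\Gamma_1$), so the shifted $\lambda_h$ would no longer solve the system---but the paper's own proof exhibits the identical mismatch (it produces $\lambda_h=\bigl(\sum_{j}s_{N+j}\psi_j\bigr)/r_0\in V_{h,0|\Gamma_1}$ without any shift while the theorem is stated for $V_{h,\lambda_0|\Gamma_0}$), so this is an inconsistency inherited from the theorem's formulation rather than a genuine gap in your argument.
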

\begin{proof}
 Denote 
 \begin{align}
  & E = C = \left(\bW_{h,0|\Gamma_0}+\bu_{0h}\right) \times \left(\mathbf{V}_{h,0|\Gamma_1}+\bn_{0h}\right) \\
  & g_0(x) = \Pi(\bv_h, \bm_h), \qquad g_j(x)=g_j(\bv_h, \bm_h)
 \end{align}
 where the functions $g_j$ are defined as in (\ref{eqn:defn-gj}). 
 It's easy to see that
 \begin{equation}
  N_C(\bar{x}) = N_E(\bar{x}) = {(0,0)}.
 \end{equation}
 Notice that
 \begin{align} \label{eqn:dPi}
  \partial \Pi(\bu_h, \bn_h) \subset \left\{Dg_0^1 + Dg_0^2 \right\}
 \end{align}
 where $Dg_0^1$ and $Dg_0^2$ are  in 
 $\left[\left(\bW_{h,0|\Gamma_0}+\bu_{0h}\right) \times \left(\mathbf{V}_{h,0|\Gamma_1}+\bn_{0h}\right)\right]^*$.
 We have
 \begin{align*}
  Dg_0^1(\bu_h, \bn_h)\cdot (\bv_h, \bm_h) 
  ={}& 
  \begin{pmatrix}
    f_1(\bv_h) \\
    f_2(\bm_h)
  \end{pmatrix}
 \end{align*}
 where 
 \begin{align*}
  f_1(\bv) 
  ={}& 
 \int_{\Omega} 2(F_h:\nabla \bv-(1-a)(F_h^T\bn_h, \nabla \bv^T \bn_h))  
 \end{align*}
 and
 \begin{align*}
  f_2(\bm)
  ={}& 
  \int_{\Omega} -2(1-a)(F_h^T \bn_h, F_h^T \bm) 
        +2b(\nabla \bm, \nabla \bn_h)
 \end{align*}
 Also,
  \begin{align*}
  Dg_0^2(\bu_h, \bn_h)\cdot (\bv_h, \bm_h) 
  ={}& 
  \begin{pmatrix}
    -\int_{\Omega} \f\cdot \bv_h-\int_{\Gamma_2} \bg\cdot \bv_h da \\
    0
  \end{pmatrix}
 \end{align*}
 We also note that $g_j(\bv_h, \bm_h)$ is continuously differentiable on 
 $\left(\bW_{h,0|\Gamma_0}+\bu_{0h}\right) \times \left(\mathbf{V}_{h,0|\Gamma_1}+\bn_{0h}\right)$,
 and that
 \begin{align}
  &\partial g_j = \{Dg_j\}, \label{eqn:dgj}\\
  & Dg_j(\bu_h, \bn_h)\cdot (\bv_h, \bm_h)
  = 
  \begin{pmatrix}
   \int_{\Omega} -\varphi_j \frac{\partial \det }{\partial F}(I+\nabla \bu_h): \nabla \bv_h \\
   0
  \end{pmatrix} \nonumber \\
  & \qquad \qquad \text{ for } 1\leq j \leq N  \label{eqn:gj1-N}\\
  & Dg_j(\bu_h, \bn_h)\cdot (\bv_h, \bm_h)
  = 
  \begin{pmatrix}
   0 \\
   \int_{\Omega} \psi_{j-N} \pi_h[2 \bn_h\cdot \bm_h] dx 
  \end{pmatrix} \nonumber \\
  & \qquad \qquad \text{ for } N+1\leq j \leq N+M \label{eqn:gj1-M}
 \end{align}
 Therefore applying Theorem \ref{thm:Clarke1976}, we have: 

 There exists real numbers $r_0$, $s_j$ not all zero, such that
 \begin{equation} \label{eqn:dPi+dgj}
  0 \in r_0 \partial \Pi(\bu_h, \bn_h) + \sum_{j=1}^{N+M} s_j \partial g_j(\bu_h, \bn_h)
 \end{equation}
 Using (\ref{eqn:dPi}) and (\ref{eqn:dgj}), equation (\ref{eqn:dPi+dgj}) becomes 
 \begin{align} \label{eqn:DPi+Dgj}
  &r_0 \{Dg_0^1(\bu_h, \bn_h)+Dg_0^2(\bu_h, \bn_h)\} + \sum_{j=1}^{N+M} s_j Dg_j(\bu_h, \bn_h) = 0 \nonumber \\
  & \text {in } \left[\left(\bW_{h,0|\Gamma_0}+\bu_{0h}\right) \times \left(\mathbf{V}_{h,0|\Gamma_1}+\bn_{0h}\right)\right]^*.
 \end{align}
 Suppose now $r_0=0$. By linearity, and using equations (\ref{eqn:gj1-N}), (\ref{eqn:gj1-M}),
 we can then transform (\ref{eqn:DPi+Dgj}) to get 
 \begin{align}
  &\int_{\Omega} \left(\sum_{j=1}^N s_j \varphi_j \right) 
  \frac{\partial \det }{\partial F}(I+\nabla \bu_h): \nabla \bv_h dx = 0, \quad \forall \bv_h \in \bW_{h,0|\Gamma_0} 
  \label{eqn:contradictLBB1}\\
  & \int_{\Omega} \left(\sum_{j=1}^M s_{N+j} \psi_j  \right)
  \pi_h[2 \bn_h\cdot \bm_h] dx =0, \quad \forall \bm_h \in \mathbf{V}_{h,0|\Gamma_1}, \label{eqn:contradictLBB2}
 \end{align}
 Since at least one $s_j$ is nonzero, at least one of the equations (\ref{eqn:contradictLBB1}) or (\ref{eqn:contradictLBB2})
 is in contradiction with the inf-sup conditions. Thus $r_0$ cannot be zero. 
 We can then divide (\ref{eqn:DPi+Dgj}) by $r_0$ to get
 \begin{align}
  &Dg_0^1(\bu_h,\bn_h)\cdot (\bv_h, \bm_h) + 1/r_0 \sum_{j=1}^M s_j Dg_j(\bu_h, \bn_h) = -Dg_0^2(\bu_h, \bn_h)\cdot(\bv_h, \bm_h), \nonumber \\
  & \qquad \forall (\bv_h,\bm_h) 
 \in \left[\left(\bW_{h,0|\Gamma_0}+\bu_{0h}\right) \times \left(\mathbf{V}_{h,0|\Gamma_1}+\bn_{0h}\right)\right] 
 \end{align}
 That is
 \begin{align}
  &f_1(\bv_h)-\int_{\Omega} p_h \frac{\partial \det }{\partial F}(I+\nabla \bu_h): \nabla \bv_h dx  
 = \int_{\Omega} \f\cdot \bv_h + \int_{\Gamma_2} \bg\cdot \bv_h da \label{eqn:constEL1} \\
 & f_2(\bm_h)+\int_{\Omega} \lambda_h \pi_h[2 \bn_h\cdot \bm_h] dx = 0 \label{eqn:constEL2}
 \end{align}
 where we have denoted
 \begin{align}
  p_h ={}& \left(\sum_{j=1}^N s_j \varphi_j\right)/r_0  \\
  \lambda_h ={}& \left(\sum_{j=1}^M s_{N+j} \psi_j\right)/r_0 
 \end{align}
 Equations (\ref{eqn:constEL1}) and (\ref{eqn:constEL2}) are precisely (\ref{d2EL1})-(\ref{d2EL2}).
 Since we also have $(\bu_h, \bn_h) \in K_h \times N_h$, 
 we conclude that $(\bu_h, \bn_h, p_h, \lambda_h)$ is a solution of (\ref{d2EL1})-(\ref{d2EL4}).
 
 Finally, if there were two different $p_h$, their difference would violate 
 the inf-sup condition for $b_1$;
 and if there were two different $\lambda_h$, their difference would violate 
 the inf-sup condition for $b_2$.
 So we have the uniqueness of both $p_h$ and $\lambda_h$.
\end{proof}

\subsection{Implementation using FEniCS}
All the computations in this project were done using FEniCS (\cite{www:fenics}), 
which is an open source software 
for automated solution of differential equations.

We want to solve the equilibrium equations (\ref{d2EL1})-(\ref{d2EL4}).
Assume $(\bar{\bu}, \bar{\bn}, \bar{p}, \bar{\lambda})$ 
are the degrees of freedom for $(\bu_h, \bn_h, p_h, \lambda_h)$.
Replacing the test functions $(\bv, \bm, q, \mu)$ by the 
corresponding basis functions, the equilibrium equations 
(\ref{d2EL1})-(\ref{d2EL4}) can be looked as a nonlinear equation 
for $(\bar{\bu}, \bar{\bn}, \bar{p}, \bar{\lambda})$:
\begin{equation}
 \mathscr{F}(\bar{\bu}, \bar{\bn}, \bar{p}, \bar{\lambda}) = 0
\end{equation}
Since it's nonlinear, we can use Newton's method to solve. 
The iterations are, for integers $k\geq 0$,
\begin{equation} \label{eqn:elastomer-newton}
 \frac{D \mathscr{F}(\bar{\bu}^k, \bar{\bn}^k, \bar{p}^k, \bar{\lambda}^k)}
 {D(\bar{\bu}, \bar{\bn}, \bar{p}, \bar{\lambda})}
(\bigtriangleup\bar{\bu}^{k}, \bigtriangleup\bar{\bn}^{k}, 
\bigtriangleup\bar{p}^{k}, \bigtriangleup\bar{\lambda}^{k})
= - \mathscr{F}(\bar{\bu}^k, \bar{\bn}^k, \bar{p}^k, \bar{\lambda}^k)
\end{equation}
It can be verified that the matrix
\begin{equation}
 A =  \frac{D \mathscr{F}(\bar{\bu}^k, \bar{\bn}^k, \bar{p}^k, \bar{\lambda}^k)}
 {D(\bar{\bu}, \bar{\bn}, \bar{p}, \bar{\lambda})}
\end{equation}
is exactly the matrix corresponding to the left side of the linearized system
(\ref{dmixed1})-(\ref{dmixed4}) (i.e., by replacing $({\bw},{\bl},o,\gamma)$
and $(\bv, \bm, q, \mu)$ by the corresponding basis functions, 
and replacing $(\bu_h, \bn_h, p_h, \lambda_h)$ by the solution at step $k$).
Each step of (\ref{eqn:elastomer-newton}) corresponds to a linear variational problem,
and can be solved by FEniCS . Thus the whole problem can be solved. 

There is a minor issue, though, about the procedure above. 
In both the equilibrium equations (\ref{d2EL1})-(\ref{d2EL4}) 
and the linearized equations (\ref{dmixed1})-(\ref{dmixed4}),
there are terms with the interpolation operator $\pi_h$, which is \textit{not} supported
in the FEniCS form language. Thus we can't directly input those bilinear and linear forms 
in the form file for FEniCS. We solved this issue by first letting
FEniCS assemble the matrix $A$ and the right-side vector $b$ without the $\pi_h$ terms, 
and then assemble the $\pi_h$ terms ourselves and update 
the matrix $A$ and the vector $b$ accordingly. 
It turns out that the $\pi_h$ terms are not very difficult to assemble.

\section{Numerical Results} \label{sec:numerical-results}
The simulation setup is as in Figure \ref{fig:clamptwosides}. The reference domain is $[0,\mathrm{AR}]\times[0,1]$.
The clamps constraint are imposed at $X=0$ and $X=\mathrm{AR}$. 
 \begin{figure}[htbp]
 \centering
 \includegraphics[width=8cm]{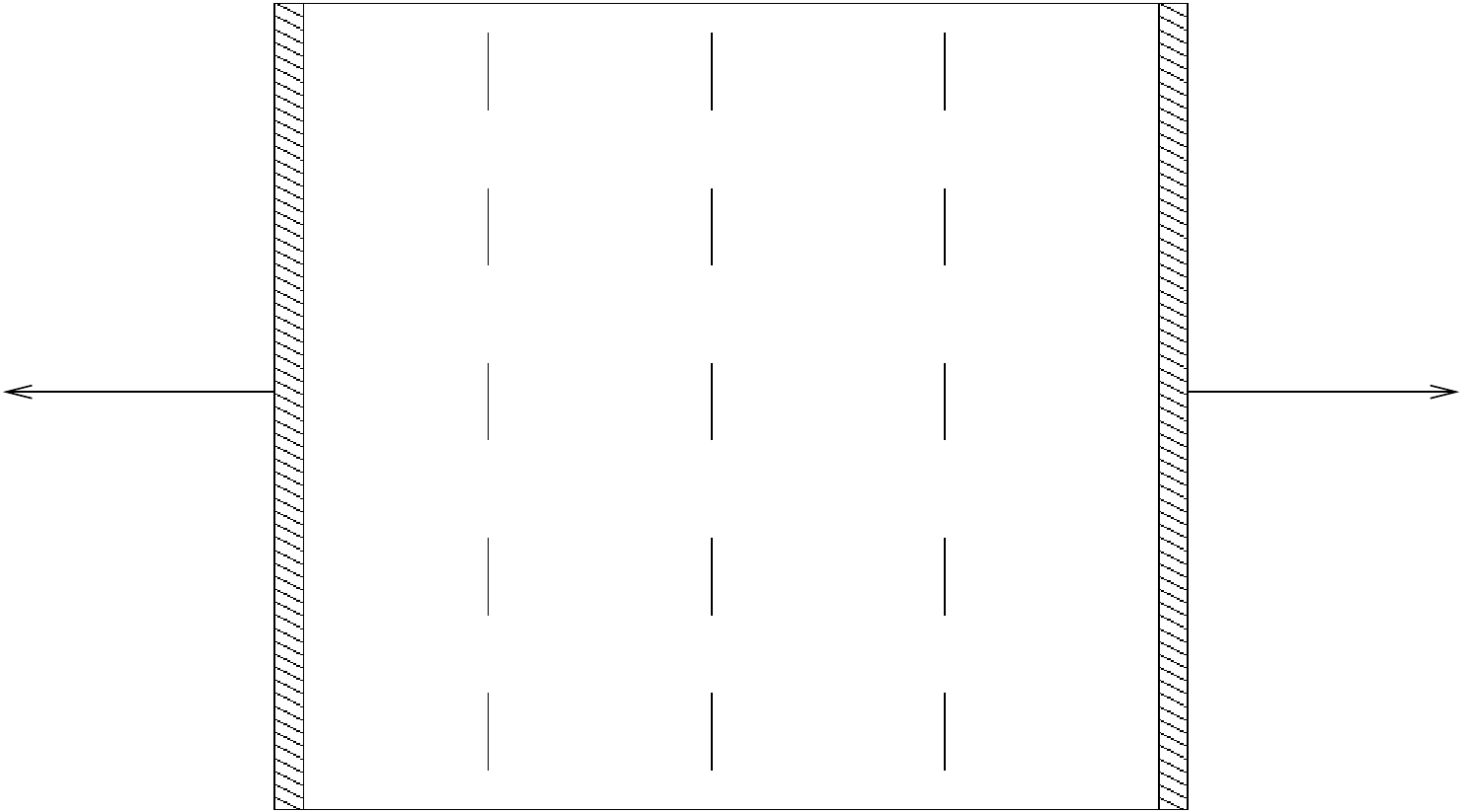}
 \caption{The elastomer is clamped and pulled on both sides.}  \label{fig:clamptwosides}
 \end{figure}
At the beginning, the elastomer is pre-stretched to the stress-free state,
and it's assumed that at this state the directors are aligned uniformly 
in the direction
\begin{equation}
 \bn = (0,1)^T
\end{equation}

The values of the displacement, pressure etc. at the stress-free state were
mentioned in section \ref{sec:stress-free}.
For completeness, we rewrite them here.
The displacement at the stress-free state is given by
\begin{align}
 u_X ={}& (a^{1/4}-1)(X-0.5\mathrm{AR}) \label{eqn:stress-free-uX} \\
 u_Y ={}& (a^{-1/4}-1)(Y-0.5) \label{eqn:stress-free-uY}
\end{align}
The pressure at the stress-free state is given by
\begin{equation} \label{eqn:stress-free-pressure}
p=2a^{1/2} 
\end{equation}
And $\lambda$ at the stress-free state is given by
\begin{equation}
\lambda = (1-a)/\sqrt{a} \label{eqn:stress-free-lambda}
\end{equation}
Suppose we want the aspect ratio at the stress-free state to be $\mathrm{AR}_n$, we need to set
\begin{equation} \label{eqn:convARnum}
 \mathrm{AR} = \frac{1}{\sqrt{a}} \mathrm{AR}_n
\end{equation}

Notice that the whole problem is symmetric about both the vertical middle
line $X=0.5\mathrm{AR}$ and the horizontal middle line $Y=0.5$.
Hence we only need to do the computation at the upper-right quarter
$[0.5\mathrm{AR}, \mathrm{AR}]\times[0.5, 1]$.
By symmetry, we have the following Dirichlet boundary conditions:
\begin{align}
 u_X = 0& \qquad \text{ on } X=0.5 \mathrm{AR} \\
 u_Y = 0&\qquad \text{ on } Y=0.5 \\
 \bn = (0,1)^T& \qquad \text{ on } X=0.5 \mathrm{AR} \text{ and } Y=0.5
\end{align}
Also, we model the ``clamped pulling'' by specifying the following
Dirichlet boundary conditions at $X=\mathrm{AR}$:
\begin{align}
 u_X ={}& 0.5\mathrm{AR}[a^{1/4}(1+Mt)-1]  \\
 u_Y ={}& (a^{-1/4}-1)(Y-0.5)  \\
 \bn ={}& (0,1)^T 
\end{align}
where $t\in[0,1]$, and $1+M$ is the largest elongation factor (ratio of the length of the elastomer
after stretching and the length of the elastomer before the stretching). 
We slowly increases $t$ from $0$ to $1$, to increase the numerical stability
of the Newton's method.
To make sure that $|\bn|=1$ at all the mesh nodes, we need to impose
Dirichlet boundary conditions for $\lambda$ at the \textit{same} boundary as $\bn$.
Therefore we impose
\begin{equation}
 \lambda = (1-a)/\sqrt{a} \quad \text{ on } X=0.5 \mathrm{AR}, X=\mathrm{AR} \text{ and } Y=0.5.
\end{equation}

We took $a=0.6$, $b=0.0015$, $\mathrm{AR}_n=1$, $M=0.4$, with ``time'' step $\bigtriangleup t = 0.01$. 
We have used uniform triangular meshes in the simulations. For example, Figure \ref{fig:mesh16-crop}
shows the mesh with mesh size $h=2^{-5}$ (notice that the mesh is $16\times 16$, but recall that this computational domain
is only $1/4$ of the true domain). 
 \begin{figure}[htbp]
 \centering
 \includegraphics[width=10cm]{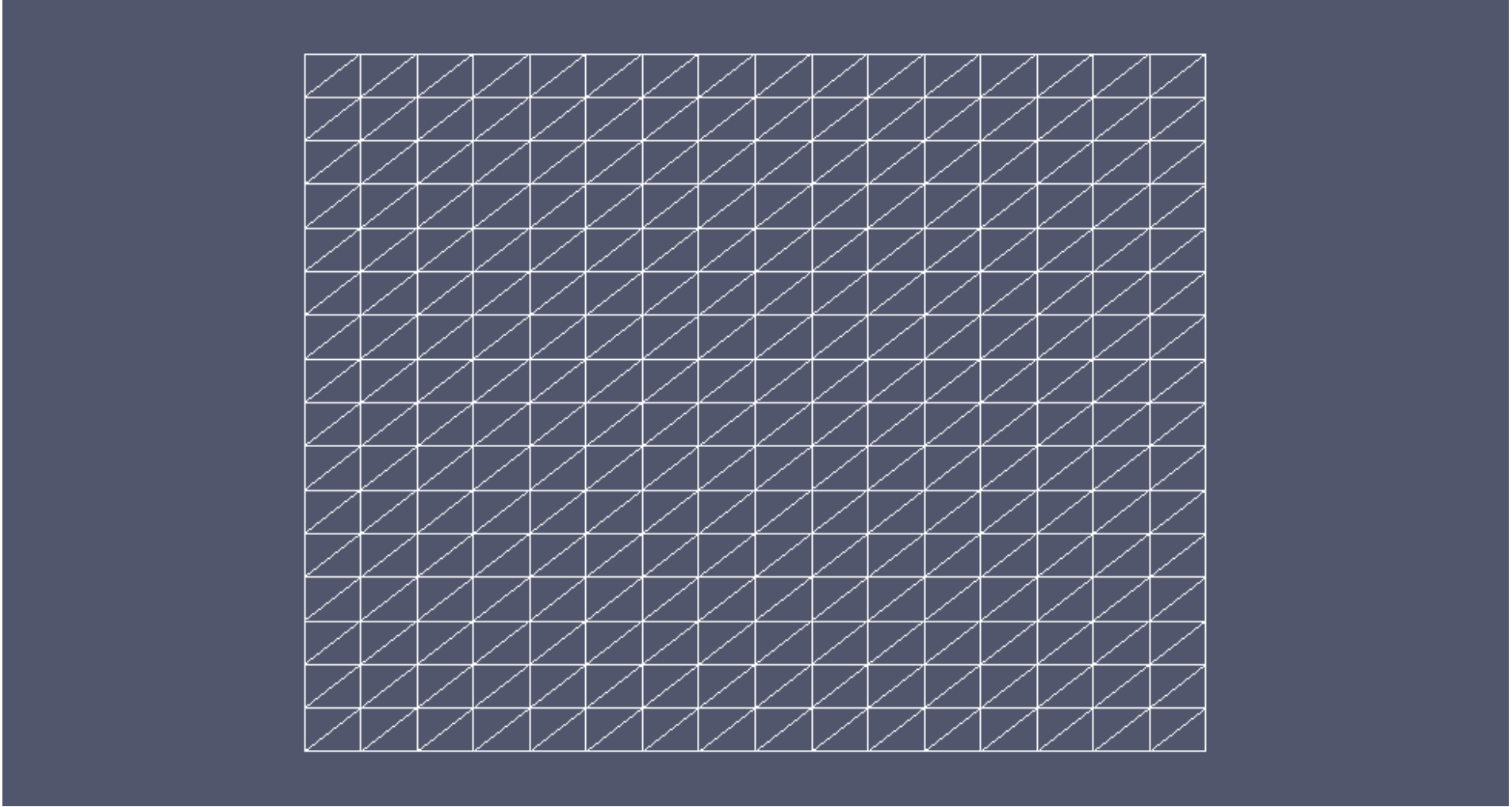}
 \caption{The uniform meshes we have used. What's shown here is the mesh with mesh size $h=2^{-5}$.}  \label{fig:mesh16-crop}
 \end{figure}

Table \ref{table:error} summarizes the numerical errors. We can see that the $L^2$ errors of $\bu_h$ and $\bn_h$,
and $H^{-1}$ error of $\lambda_h$ are relatively small, while
the $H^1$ errors of $\bu_h$ and $\bn_h$ are relatively large.
\begin{table}[htbp] 
\centering
\begin{tabular}{ | c || l | l | l | l | } 
\hline	 
$h$ & $2^{-2}$ & $2^{-3}$ & $2^{-4}$ & $2^{-5}$ \\
\hline 
$\|\bu_{h}-\bu_{h/2}\|_0$ & 3.49E-03 & 1.91E-03 & 8.39E-04 & 2.69E-04 \\ 
$\|\bu_{h}-\bu_{h/2}\|_1$ & 5.14E-02 & 3.77E-02 & 2.02E-02 & 7.66E-03 \\ 
$\|\bn_{h}-\bn_{h/2}\|_0$ & 2.32E-01 & 9.70E-02 & 3.05E-02 & 8.25E-03 \\ 
$\|\bn_{h}-\bn_{h/2}\|_1$ & 2.31E+00 & 1.91E+00 & 1.19E+00 & 6.23E-01 \\ 
$\|p_{h}-p_{h/2}\|_0$ & 1.68E-01 & 7.93E-02 & 2.38E-02 & 8.99E-03 \\ 
$\|\lambda_{h}-\lambda_{h/2}\|_{-1}$ & 1.15E-02 & 4.41E-03 & 1.51E-03 & 5.22E-04 \\ 
\hline
\end{tabular}
\caption{The numerical errors.} \label{table:error}
\end{table} 
Table \ref{table:conv-rates} summarizes the convergence rates. 
We can see that the convergence rates are relatively fast, except
the $H^1$ errors of $\bu_h$ and $\bn_h$.
\begin{table}[htbp]
\centering
\begin{tabular}{ | c || l | l | l | } 
\hline	 
$h$ & $2^{-3}$ & $2^{-4}$ & $2^{-5}$ \\
\hline 
$\log_2(\|\bu_{2h}-\bu_{h}\|_0/\|\bu_{h}-\bu_{h/2}\|_0)$ & 0.87 & 1.18 & 1.64 \\ 
$\log_2(\|\bu_{2h}-\bu_{h}\|_1/\|\bu_{h}-\bu_{h/2}\|_1)$ & 0.45 & 0.90 & 1.40 \\ 
$\log_2(\|\bn_{2h}-\bn_{h}\|_0/\|\bn_{h}-\bn_{h/2}\|_0)$ & 1.26 & 1.67 & 1.88 \\ 
$\log_2(\|\bn_{2h}-\bn_{h}\|_1/\|\bn_{h}-\bn_{h/2}\|_1)$ & 0.27 & 0.69 & 0.93 \\ 
$\log_2(\|p_{2h}-p_{h}\|_0/\|p_{h}-p_{h/2}\|_0)$ & 1.08 & 1.74 & 1.41 \\ 
$\log_2(\|{\lambda}_{2h}-{\lambda}_{h}\|_{-1}/\|{\lambda}_{h}-{\lambda}_{h/2}\|_{-1})$  & 1.38 & 1.55 & 1.54 \\ 
\hline
\end{tabular}
\caption{The convergence rates.}  \label{table:conv-rates}
\end{table}

Next, let's check the results of the inf-sup tests. Table \ref{table:infsup-t0}
lists the inf-sup values at $t=0$, where $s(\mathscr{A}|\mathrm{Ker}(\mathscr{B}))$ is the inf-sup 
value of $\mathscr{A}|\mathrm{Ker}(\mathscr{B})$, while $e(\mathscr{A}|\mathrm{Ker}(\mathscr{B}))$ is the ellipticity
constant of $\mathscr{A}|\mathrm{Ker}(\mathscr{B})$. We can see that the inf-sup values
of $b_1$ and $b_2$ don't change much with the mesh. This implies 
that the inf-sup conditions for $b_1$ and $b_2$ probably are always satisfied
regardless of the mesh. Next, we can see that the inf-sup 
values for $s(\mathscr{A}|\mathrm{Ker}(\mathscr{B}))$ are positive for all the 4 meshes, which means
the inf-sup conditions are satisfied thus
the discrete saddle point systems are well-posed for all the 4 meshes.
However, it seems that the inf-sup values $s(\mathscr{A}|\mathrm{Ker}(\mathscr{B}))$ tend to 0
as $h$ goes to 0. Finally, we can see that the ellipticity constants
$e(\mathscr{A}|\mathrm{Ker}(\mathscr{B}))$ are negative for the given 4 meshes.
This means the ellipticity conditions are not satisfied at the initial
stress-free state. This is consistent with our analytical observation.
\begin{table}[htbp]
\centering
\begin{tabular}{ | c || l | l | l | l | } 
\hline	 
$h$ & $2^{-2}$ & $2^{-3}$ & $2^{-4}$ & $2^{-5}$ \\
\hline 
$b_1$ & 0.5836 & 0.5875 & 0.5879 & 0.5880 \\ 
$b_2$ & 2.0000 & 2.0000 & 2.0000 & 2.0000 \\ 
$s(\mathscr{A}|\mathrm{Ker}(\mathscr{B}))$ & 3.60E-03 & 2.70E-04 & 5.69E-05 & 1.62E-04\\ 
$e(\mathscr{A}|\mathrm{Ker}(\mathscr{B}))$ & -3.60E-03 & -1.27E-02 & -1.78E-02 & -1.21E-02\\ 
\hline
\end{tabular}
\caption{The inf-sup values at $t=0$.}  \label{table:infsup-t0}
\end{table}
Table \ref{table:infsup-t1} lists the inf-sup values at the final state $t=1$
(elongation factor $s=1.4$). We can see that the inf-sup values for $b_1$ and $b_2$
still don't vary much with respect to the mesh size $h$. This suggests
that the inf-sup conditions for $b_1$ and $b_2$ are probably satisfied
for all the $\bu_h$ and $\bn_h$ solutions during the pulling process.
On the other hand, we still see that $s(\mathscr{A}|\mathrm{Ker}(\mathscr{B}))$ are
positive, while $e(\mathscr{A}|\mathrm{Ker}(\mathscr{B}))$ are negative, which means
the inf-sup condition for $s(\mathscr{A}|\mathrm{Ker}(\mathscr{B}))$ are satisfied,
while the ellipticity condition are not. 
Similar to $t=0$, although the inf-sup values $s(\mathscr{A}|\mathrm{Ker}(\mathscr{B}))$ are positive
for the 4 given meshes, they do seem to converge to zero
as the mesh size $h$ goes to 0.
\begin{table}[htbp]
\centering
\begin{tabular}{ | c || l | l | l | l | } 
\hline	 
$h$ & $2^{-2}$ & $2^{-3}$ & $2^{-4}$ & $2^{-5}$ \\
\hline 
$b_1$ & 0.6549 & 0.6431 & 0.6287 & 0.6163 \\ 
$b_2$ & 1.9967 & 1.9503 & 1.9065 & 1.8711 \\ 
$s(\mathscr{A}|\mathrm{Ker}(\mathscr{B}))$ & 2.91E-03 & 1.20E-03 & 5.82E-04 & 4.88E-05\\ 
$e(\mathscr{A}|\mathrm{Ker}(\mathscr{B}))$ & -2.91E-03 & -2.58E-03 & -5.82E-04 & -4.88E-05\\ 
\hline
\end{tabular}
\caption{The inf-sup values at $t=1$.}  \label{table:infsup-t1}
\end{table}

Figure \ref{fig:ssa0.6b1.5e-3} shows the graph of nominal stress vs. strain,
where we have taken the stress-free state as the reference configuration.
We can clearly see a plateau when the strain is in $(0.10, 0.22)$. Comparing with 
Figure \ref{fig:semi-soft}, we can say that we've recovered the semi-soft elasticity
phenomenon. The plateau regions in Figure \ref{fig:semi-soft} are more flat than ours,
because those graphs are for long and thin elastomers, while our graph
is for the elastomer with the aspect ratio $\mathrm{AR}_n=1$.
 \begin{figure}[htbp]
 \centering
 \includegraphics[width=8cm,angle=270]{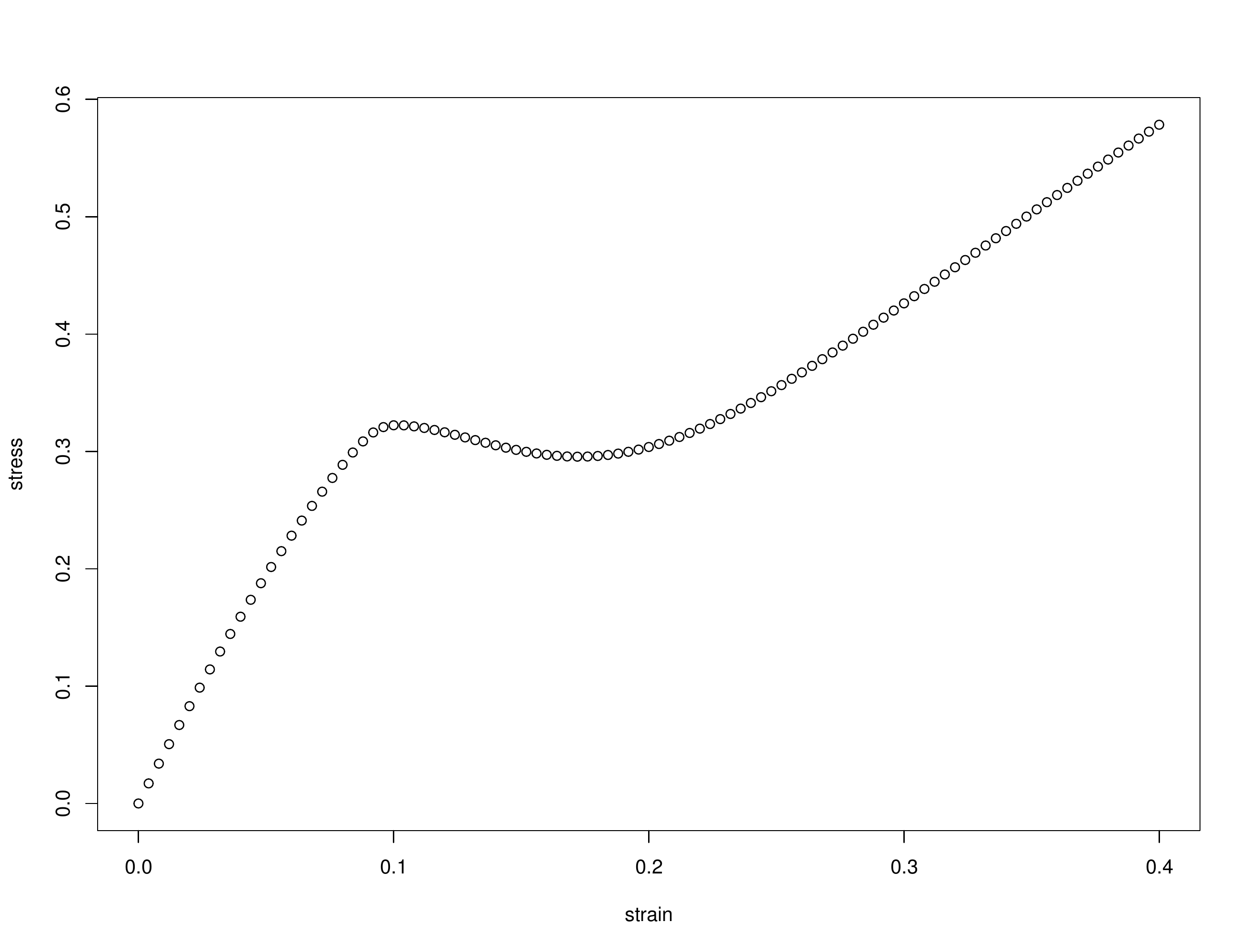}
 \caption{Nominal stress vs. strain. The plateau region corresponds to strains in the interval $(0.10, 0.22)$.}  \label{fig:ssa0.6b1.5e-3}
 \end{figure}

Figure \ref{fig:ort-platb},\ref{fig:ort-platm} and \ref{fig:ort-plate} show the directors configuration 
at the start, bottom, and end of the plateau region (elongation factor is about $1.10$, $1.17$
and $1.22$ respectively).
We can see that the directors just start to rotate at the start of 
the plateau region (Figure \ref{fig:ort-platb}), 
while many of the directors have finish the rotations at the end of the plateau region (Figure \ref{fig:ort-plate}).
Also, we can see from Figure \ref{fig:ort-platb},\ref{fig:ort-platm} and \ref{fig:ort-plate} 
that during the plateau interval $(0.10, 0.22)$,
the elastomer domain is mostly blue (low BTW energy);
while after the plateau region (Figure \ref{fig:ort-final}), the red part (hight BTW energy) starts to dominate the elastomer domain.
This means by rotating the directors, the elastomer maintains relatively low BTW energy during the elongation;
while after most of the directors have already finished the rotation, the BTW energy starts to increase.
This suggests that the plateau region in the stress-strain graph is probably caused
by the rotation of the directors. This agrees with the 
theory of liquid crystal elastomers \cite{warner2007liquid}.
 
 \begin{figure}[htbp]
 \centering
 \includegraphics[width=14cm]{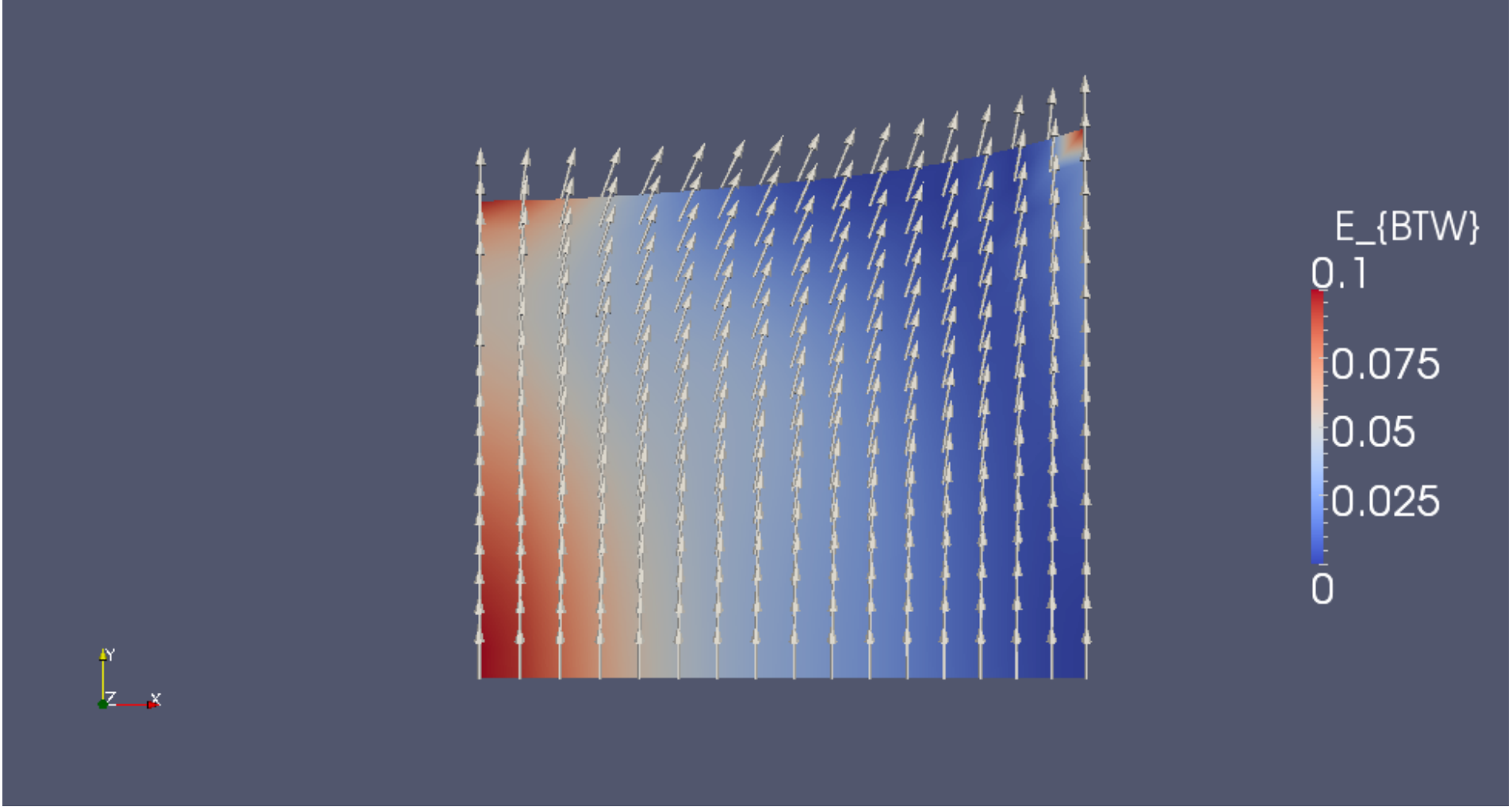}
 \caption{The directors at the beginning of the plateau region, elongation factor $s=1.10$.}  \label{fig:ort-platb}
 \end{figure}

 \begin{figure}[htbp]
 \centering
 \includegraphics[width=14cm]{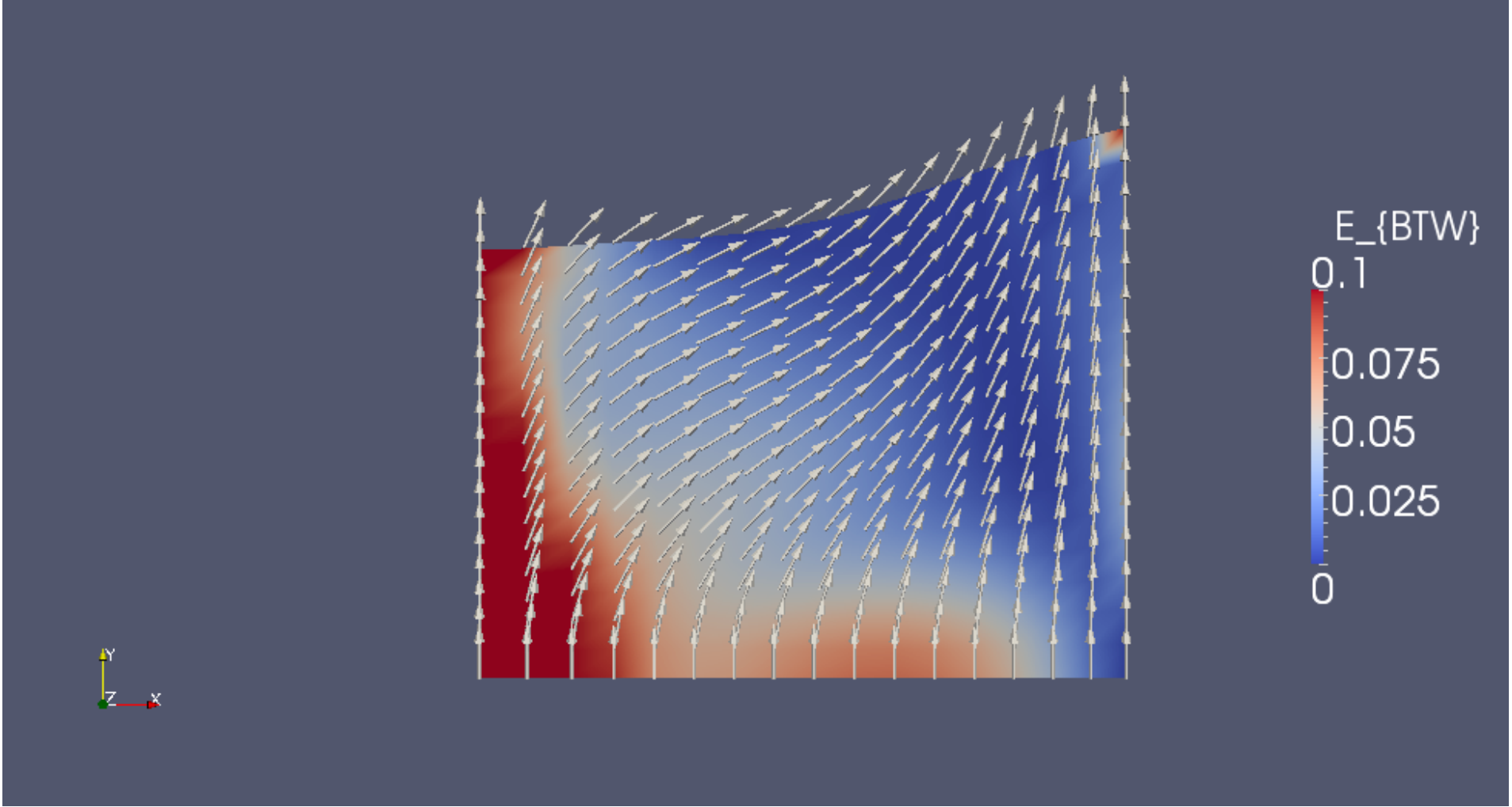}
 \caption{The directors at the bottom of the plateau region, elongation factor $s=1.17$.}  \label{fig:ort-platm}
 \end{figure}

 \begin{figure}[htbp]
 \centering
 \includegraphics[width=14cm]{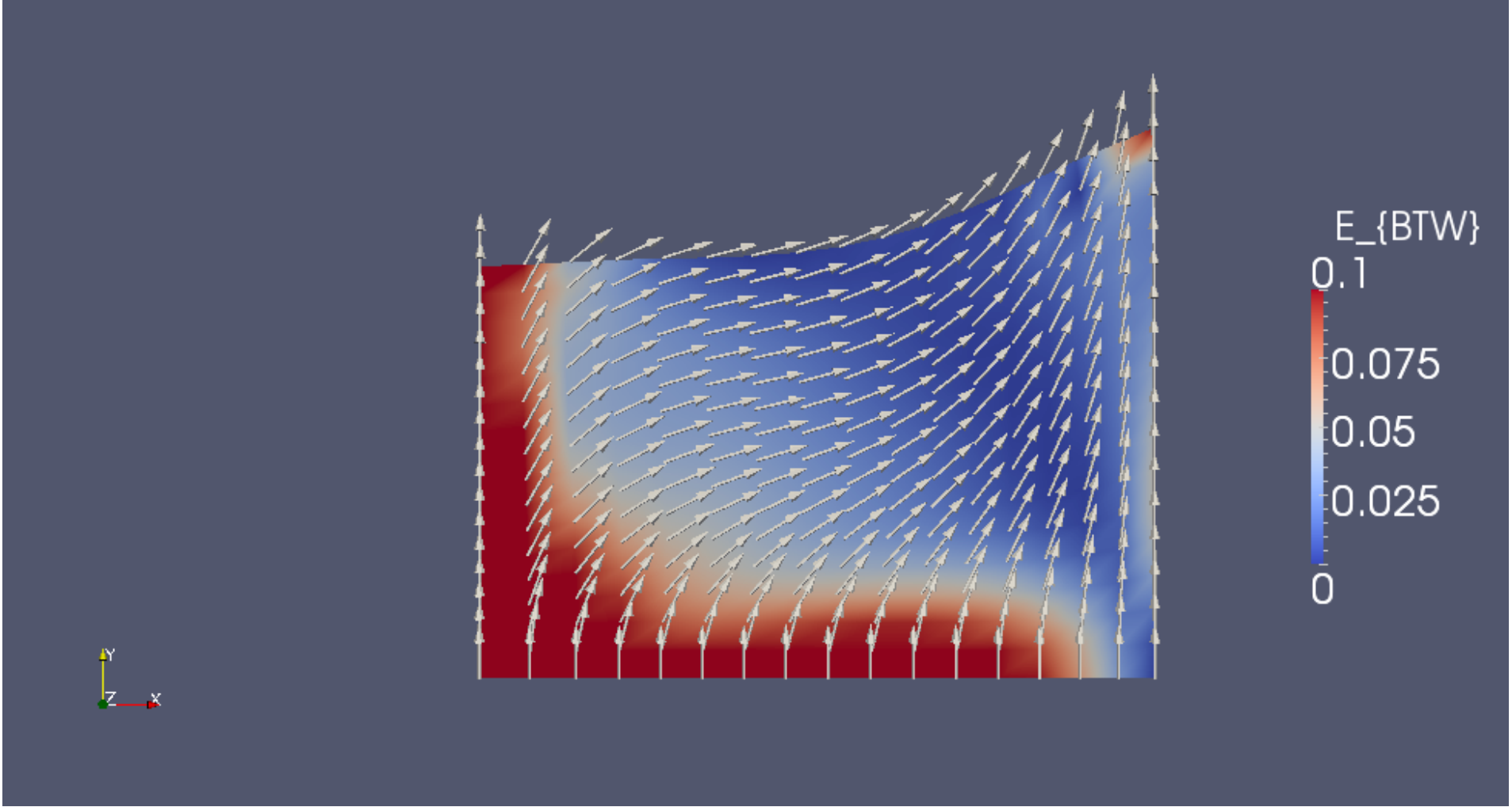}
 \caption{The directors at the end of the plateau region, elongation factor $s=1.22$.}  \label{fig:ort-plate}
 \end{figure}

Figure \ref{fig:ort-final} shows the final configuration of the directors (elongation factor $s=1.4$). 
We can see that most of the directors have finished the rotation in the final state.
 \begin{figure}[htbp]
 \centering
 \includegraphics[width=14cm]{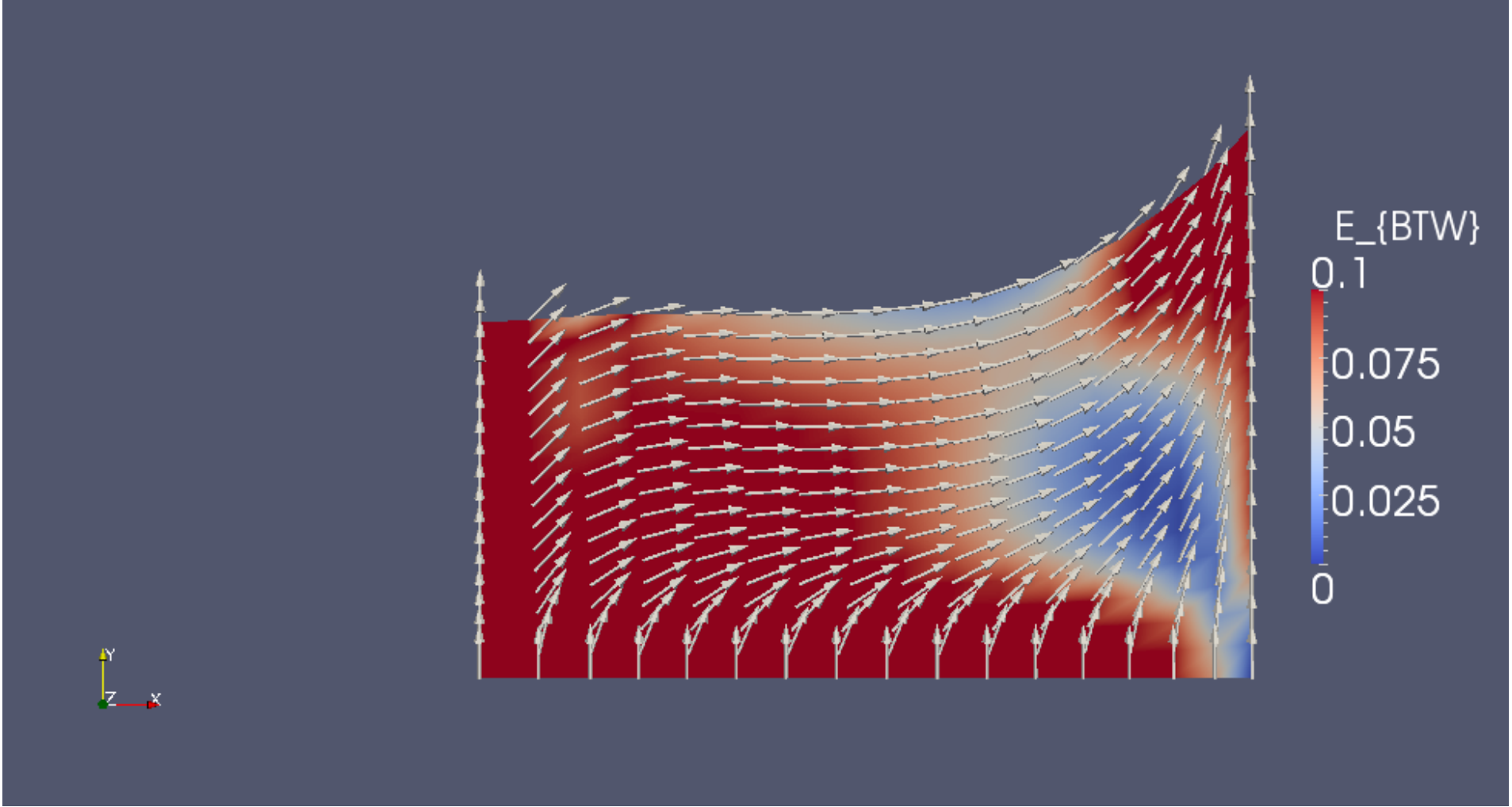}
 \caption{The directors at the final state, elongation factor $s=1.4$.}  \label{fig:ort-final}
 \end{figure}

\section{Conclusion and Discussion}\label{sec:discussion}
We have designed a numerical algorithm to simulate the 2D BTW+Oseen-Frank 
model of liquid crystal elastomer. We aimed to recover numerically
the experimentally observed phenomena such as the stripe-domain phenomenon,
the semi-soft elasticity etc. The existence, well-posedness, uniqueness
and convergence etc. of the numerical algorithm have been investigated.

We have successfully captured the semi-soft elasticity of liquid crystal
elastomer, and found it to be related with the rotation of the directors.
However, we didn't see the stripe-domain phenomenon.
We believe this is mainly because the $b$ value (the coefficient of the Oseen-Frank energy) 
$0.0015$ is relatively large
comparing to true value of $b$ in practice, which 
we estimate to be probably less than $10^{-6}$.
Since the $b$ value is relatively large, the Oseen-Frank energy
dominates the BTW energy, while the latter of which is the main driving force for the occurrence of stripe domains.
On the other hand, to observe the stripe domain phenomenon, we should use meshes
fine enough to resolve the stripes. 
For example, in the experiment of Zubarev and Finkelmann (\cite{zubarev1999monodomain}),
the width of stripe domain divided by the width of the elastomer
is around $15\mu m / 5 mm =0.003$, which is much smaller than the mesh size $2^{-5}$ of our finest mesh. 
However, very small $b$ values
require small ``time'' steps to be stable, and this together with very fine meshes
would be computationally very expensive. 
We have tried $512\times 512$ uniform mesh, and the program ran out of memory.
In the future, when the machines have much greater computing power, 
we can try very small $b$ values and very fine mesh,
and see whether the stripe domain occurs. Instead of using uniformly fine mesh,
we can also use adaptive mesh to reduce the computational cost.

Another direction worth trying is to use Ericksen or Landau-de Gennes'
model instead of the Oseen-Frank energy in the numerical simulation.
Oseen-Frank energy only allows point defects, while Ericksen or Landau-de Gennes'
model allows line and surface defects, as well (\cite{majumdar2010landau}). The stripe domains might
have line or surface defects in the transition area between the stripes, 
thus using Ericksen or Landau-de Gennes' model might have a better 
chance of capturing the stripe domain phenomenon.

\addcontentsline{toc}{section}{References}
\bibliography{numLCE}

\begin{thebibliography}{10}

\bibitem{ball1976convexity}
{\sc J.~Ball}, {\em {Convexity conditions and existence theorems in nonlinear
  elasticity}}, Archive for rational mechanics and Analysis, 63 (1976),
  pp.~337--403.

\bibitem{bladon1993transitions}
{\sc P.~Bladon, E.~Terentjev, and M.~Warner}, {\em {Transitions and
  instabilities in liquid crystal elastomers}}, Physical Review E, 47 (1993),
  pp.~3838--3840.

\bibitem{BrezziFortin1991}
{\sc F.~Brezzi and M.~Fortin}, {\em Mixed and Hybrid Finite Element Methods},
  Springer, Berlin, 1991.

\bibitem{CI87}
{\sc P.~Ciarlet}, {\em Mathematical Elasticity, Vol 1}, North-Holland, 1987.

\bibitem{clarke1975generalized}
{\sc F.~Clarke}, {\em {Generalized gradients and applications}}, Transactions
  of the American Mathematical Society, 205 (1975), pp.~247--262.

\bibitem{clarke1976new}
\leavevmode\vrule height 2pt depth -1.6pt width 23pt, {\em {A new approach to
  Lagrange multipliers}}, Mathematics of Operations Research, 1 (1976),
  pp.~165--174.

\bibitem{clarke2001effect}
{\sc S.~Clarke, A.~Hotta, A.~Tajbakhsh, and E.~Terentjev}, {\em {Effect of
  crosslinker geometry on equilibrium thermal and mechanical properties of
  nematic elastomers}}, Physical Review E, 64 (2001), p.~61702.

\bibitem{conti2002soft}
{\sc S.~Conti, A.~DeSimone, and G.~Dolzmann}, {\em {Soft elastic response of
  stretched sheets of nematic elastomers: a numerical study}}, Journal of the
  Mechanics and Physics of Solids, 50 (2002), pp.~1431--1451.

\bibitem{evans1998c}
{\sc L.~Evans}, {\em {Partial Differential Equations}}, American Mathematical
  Society,  (1998).

\bibitem{www:fenics}
{\sc {FE}ni{CS}}, {\em {FE}ni{CS} project}.
\newblock URL: url{http//www.fenics.org/}.

\bibitem{frank1958liquid}
{\sc F.~Frank}, {\em {I. Liquid crystals. On the theory of liquid crystals}},
  Discussions of the Faraday Society, 25 (1958), pp.~19--28.

\bibitem{hu2009saddle}
{\sc Q.~Hu, X.~Tai, and R.~Winther}, {\em {A saddle point approach to the
  computation of harmonic maps}}, SIAM Journal on Numerical Analysis, 47
  (2009), pp.~1500--1523.

\bibitem{kundler1995strain}
{\sc I.~Kundler and H.~Finkelmann}, {\em {Strain-induced director reorientation
  in nematic liquid single crystal elastomers}}, Macromolecular rapid
  communications, 16 (1995), pp.~679--686.

\bibitem{küpfer1994liquid}
{\sc J.~K{\"u}pfer and H.~Finkelmann}, {\em {Liquid crystal elastomers:
  influence of the orientational distribution of the crosslinks on the phase
  behaviour and reorientation processes}}, Macromolecular chemistry and
  physics, 195 (1994), pp.~1353--1367.

\bibitem{le1981compatibility}
{\sc P.~Le~Tallec}, {\em {Compatibility condition and existence results in
  discrete finite incompressible elasticity}}, Computer Methods in Applied
  Mechanics and Engineering, 27 (1981), pp.~239--259.

\bibitem{majumdar2010landau}
{\sc A.~Majumdar and A.~Zarnescu}, {\em {Landau--De Gennes Theory of Nematic
  Liquid Crystals: the Oseen--Frank Limit and Beyond}}, Archive for rational
  mechanics and analysis, 196 (2010), pp.~227--280.

\bibitem{mitchell1993strain}
{\sc G.~Mitchell, F.~Davis, and W.~Guo}, {\em {Strain-induced transitions in
  liquid-crystal elastomers}}, Physical review letters, 71 (1993),
  pp.~2947--2950.

\bibitem{LeTallec}
{\sc P.~L. Tallec}, {\em Numerical methods for nonlinear three-dimensional
  elasticity}, Handbook of Numerical Analysis, 3 (1994), pp.~465--622.

\bibitem{warner2007liquid}
{\sc M.~Warner and E.~Terentjev}, {\em {Liquid crystal elastomers}}, Oxford
  University Press, USA, 2007.

\bibitem{zubarev1999monodomain}
{\sc E.~Zubarev, S.~Kuptsov, T.~Yuranova, R.~Talroze, and H.~Finkelmann}, {\em
  {Monodomain liquid crystalline networks: reorientation mechanism from uniform
  to stripe domains}}, Liquid crystals, 26 (1999), pp.~1531--1540.

\end{thebibliography}
\end{document}